\theoremstyle{plain}
\newtheorem{theorem}{Theorem}
\newtheorem*{theorem*}{Theorem}
\newtheorem{lemma}{Lemma}
\newtheorem{proposition}{Proposition}
\newtheorem*{proposition*}{Proposition}
\newtheorem{corollary}{Corollary}
\theoremstyle{definition}
\newtheorem{definition}{Definition}
\newtheorem{hypothesis}{Hypothesis}
\theoremstyle{remark}
\newtheorem{remark}{\sc Remark}
\newtheorem{example}{\sc Example}
\newtheorem*{notation}{\sc Notation}
\newcommand{\categ}[1]{\mathsf{#1}}
\newcommand{\operad}[1]{\mathsf{#1}}
\newcommand{\catofmod}[1]{{#1}\mathrm{-}\mathsf{mod}}
\newcommand{\catofcog}[1]{\operad{#1}\mathrm{-}\mathsf{cog}}
\newcommand{\catofalg}[1]{\operad{#1}\mathrm{-}\mathsf{alg}}
\newcommand{\Fun}[2]{\mathrm{Fun}\left(#1,#2\right)}
\newcommand{\id}{\mathrm{Id}}
\newcommand{\Mapp}[3]{\mathrm{Map}_{#1}\left(#2,#3\right)}
\newcommand{\op}{\mathrm{op}}
\newcommand{\Ind}{\mathsf{Ind}}
\newcommand{\Psh}{\mathsf{Psh}}
\newcommand{\itemt}{\item[$\triangleright$]}
\newcommand{\poubelle}[1]{}
\title{Presentations of algebras in chain complexes}
\author{Brice Le Grignou}
\email{bricelegrignou "at" gmail.com}
\date{\today}
\begin{document}

\begin{abstract}
    The goal of this article is to describe several presentations of the infinity category of algebras over some monad on the infinity category of chain complexes.
\end{abstract}
\maketitle

\setcounter{tocdepth}{1}
\tableofcontents

\section*{Introduction}

Many categories encountered in Algebra are presentable. This means that for such a category $\categ C$, one can find a small category $\categ D$ and an equivalence relating $\categ C$ to the full subcategory of $\Fun{\categ D^\op}{\categ{Set}}$ spanned by functors $F : \categ D \to \categ{Set}$ that satisfy some locality conditions.

Finite products theories also called Props provides many examples of such presentable categories in Algebra.  For instance, if $R$ is a commutative ring, the category of $R$-modules may be presented as follows. Let $\categ{Cart}_R$ be the full subcategory of $R$-modules spanned by those of the form $R^n$ for $n \in \mathbb N$. Then the category of $R$-modules is equivalent to the full subcategory of $\Fun{\categ{Cart}_R^\op}{\categ{Set}}$ spanned by functors that preserves finite products.

Moreover, since $R$-modules are equivalent to such products preserving functors, then many categories of algebras in $R$-modules have a similar presentation. Indeed, let us consider a monadic adjunction
$$
\begin{tikzcd}
R-\categ{Modules} \ar[rr, shift left, "T_M"]
&& \categ A \ar[ll, shift left, "U^M"]
\end{tikzcd}
$$
whose associated monad $M$ preserves reflective coequalisers and filtered colimits, and let $\categ{Cart}_M$ be the full subcategory of $\categ A$ spanned by objects of the form $T_M(R^n)$ for $n \in \mathbb N$. Then the category $\categ A$ is equivalent to the full subcategory of $\Fun{\categ{Cart}_M^\op}{\categ{Set}}$ spanned by functors that preserves finite products.

These results have extensions in the world of $\infty$-categories. Indeed, if $\categ{Ch}_R^{\geq 0}$ is the $\infty$-category of chain complexes of $R$-modules in non negative degrees and if $\categ{S}$ is the $\infty$-category of $\infty$-groupoids, then we have the following result.

\begin{proposition*}\cite[Corollary 4.7.3.18]{Lurie18}
Let $U^M : \categ A \to \categ{Ch}_R$ be a monadic functor that preserves filtered colimits and geometric realisations and let $T_M$ be its left adjoint. If $\categ{Cart}_M$ is the full subcategory of $\categ A$ spanned by the objects $T_M(R^n), n \in \mathbb N$, then $\categ A$ is equivalent to the full subcategory of $\Fun{\categ{Cart}_M^\op}{\categ S}$ spanned by functors that preserve finite products.
\end{proposition*}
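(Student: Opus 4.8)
The plan is to recognise the target $\infty$-category as a free sifted cocompletion and to show that the inclusion of $\categ{Cart}_M$ extends to an equivalence. Observe first that $\categ{Cart}_M$ admits finite coproducts: since $T_M$ is a left adjoint it preserves coproducts, and $R^n \sqcup R^m \simeq R^{n+m}$, so $T_M(R^n)\sqcup T_M(R^m)\simeq T_M(R^{n+m})$ again lies in $\categ{Cart}_M$. Consequently the full subcategory of $\Fun{\categ{Cart}_M^\op}{\categ S}$ spanned by the finite-product-preserving functors is precisely the non-abelian derived $\infty$-category $\PP_\Sigma(\categ{Cart}_M)$, the free sifted cocompletion of $\categ{Cart}_M$. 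Its universal property (Lurie, \emph{Higher Topos Theory}, \S 5.5.8) states that the Yoneda embedding $j\colon \categ{Cart}_M \to \PP_\Sigma(\categ{Cart}_M)$ induces, for every $\infty$-category $\categ E$ admitting sifted colimits, an equivalence between the sifted-colimit-preserving functors $\PP_\Sigma(\categ{Cart}_M)\to \categ E$ and all functors $\categ{Cart}_M \to \categ E$.

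Next I would build the comparison functor. The target $\categ A$ admits sifted colimits: being monadic, $U^M$ creates the colimits it preserves, and since it preserves filtered colimits and geometric realisations — the two classes that generate all sifted colimits — the category $\categ A$ admits and $U^M$ preserves sifted colimits. Applying the universal property to the inclusion $\iota\colon \categ{Cart}_M \hookrightarrow \categ A$ yields an essentially unique sifted-colimit-preserving functor $F\colon \PP_\Sigma(\categ{Cart}_M)\to \categ A$ with $F\circ j\simeq \iota$. It then suffices to prove that $F$ is fully faithful and essentially surjective.

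For full faithfulness the key point is that each $T_M(R^n)$ is compact projective in $\categ A$, i.e. $\Mapp{\categ A}{T_M(R^n)}{-}$ preserves sifted colimits: by adjunction this mapping space is $\Mapp{\categ{Ch}_R}{R^n}{U^M(-)}$, and $R^n$ is compact projective in $\categ{Ch}_R$ while $U^M$ preserves sifted colimits. Since the representables $j(T_M(R^n))$ generate $\PP_\Sigma(\categ{Cart}_M)$ under sifted colimits, $F$ preserves them, and $F$ is fully faithful on $\categ{Cart}_M$ (there it is $\iota$, and $\iota$ is a Yoneda-type inclusion). A standard density argument then upgrades this: writing an arbitrary source as a sifted colimit of representables reduces a general mapping space to the case of a representable source, and compact projectivity of $T_M(R^n)$ in $\categ A$ treats the target, yielding full faithfulness of $F$ everywhere.

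The main obstacle is essential surjectivity, which is the technical heart of the argument. Here I would invoke the monadic bar resolution: by Barr--Beck--Lurie, every $A \in \categ A$ is canonically the geometric realisation of the simplicial object $(T_M U^M)^{\bullet+1}A$, whose $k$-simplices are the free algebras $T_M\bigl((U^M T_M)^k U^M A\bigr)$ on objects of $\categ{Ch}_R$ (this realisation is computed because the resolution is $U^M$-split and $U^M$ preserves geometric realisations). Each such free algebra is a sifted colimit of objects $T_M(R^n)$, since every object of $\categ{Ch}_R$ is a sifted colimit of the $R^n$ and $T_M$, as a left adjoint, preserves it. Hence every $A$ lies in the closure of $\categ{Cart}_M$ under sifted colimits, so $F$ is essentially surjective. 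Combining the two parts shows $F$ is an equivalence, and therefore $\categ A \simeq \PP_\Sigma(\categ{Cart}_M)$ is the full subcategory of $\Fun{\categ{Cart}_M^\op}{\categ S}$ of finite-product-preserving functors, as claimed.
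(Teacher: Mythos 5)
Your argument is correct on the intended reading of the statement, but it is essentially Lurie's own proof of \cite[Corollary 4.7.3.18]{Lurie18}, and it is a genuinely different route from the one this paper takes. You proceed directly: identify the finite-product-preserving presheaves with the free sifted cocompletion $\Psh_{\Sigma}(\categ{Cart}_M)$, extend the inclusion $\categ{Cart}_M \hookrightarrow \categ A$ to a sifted-colimit-preserving comparison functor, then check full faithfulness via compact projectivity of the $T_M(R^n)$ (using $\Mapp{\categ A}{T_M(R^n)}{-} \simeq \Mapp{\categ{Ch}_R^{\geq 0}}{R^n}{U^M(-)}$) and essential surjectivity via the monadic bar resolution. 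The paper instead deduces the statement from its general Theorem \ref{thmmain}: in the appendix (Proposition \ref{propositionconnectivecase} and the corollary following it) one takes $\categ C = \Psh_{\Sigma}(\categ D)$ and $\categ E = \Psh_{\Sigma}(\categ B)$, where the base presentation $\categ{Ch}_R^{\geq 0,K} \simeq \Psh_{\Sigma}(\categ{Cart}^{(K)}_R)$ is obtained from the Dold--Kan correspondence at the model-category level, and the only remaining verification is that a certain composite functor is conservative, which follows from the functor $\categ D \to \categ B$ being surjective on objects. The density and bar-resolution work you do by hand is, in the paper, packaged once and for all inside the proof of Theorem \ref{thmmain} (notably Lemma \ref{lemmaequivalence}, which writes product-preserving presheaves as geometric realisations of ind-objects). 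Your route is self-contained and standard; the paper's route buys uniformity, since the same Theorem \ref{thmmain} also yields the stable cellular and cartesian presentations of Theorems \ref{theoremcellularpresentation} and \ref{theoremcartesianpresentation}, where your argument would not apply.

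One point you should make explicit: your two key steps --- compact projectivity of $R^n$ and, above all, the claim that every object of the base category is a sifted colimit of the $R^n$ --- hold only in the connective category $\categ{Ch}_R^{\geq 0}$, which is how the statement (a citation of Lurie's result on connective modules, immediately following the introduction of $\categ{Ch}_R^{\geq 0}$) must be read despite the notation $\categ{Ch}_R$. In the unbounded stable $\infty$-category both steps fail: connective objects are closed under sifted colimits, so for instance $S^{-1}$ is not a sifted colimit of the $R^n$, and $\Mapp{\categ{Ch}_R}{R}{-}$ is neither conservative nor sifted-colimit-preserving. This failure is precisely the reason the body of the paper replaces the $R^n$ by spheres $S^m_k$ in all degrees and imposes stability conditions on the presheaves rather than mere product-preservation.
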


The goal of this article is to get similar presentations in the stable context. More precisely, given a monadic adjunction relating the product $\categ{Ch}_R^K$ over a set $K$ of the $\infty$-category of chain complexes $\categ{Ch}_R$
to an $\infty$-category $\categ A$
$$
\begin{tikzcd}
\categ{Ch}_R^K \ar[rr, shift left, "T_M"]
&& \categ{A} \ar[ll, shift left,"U^M"]
\end{tikzcd}
$$
whose right adjoint $U^M$ preserves filtered colimits and geometric realisations, we describe several presentations of the $\infty$-category $\categ A$.

One kind of presentation is given by finitely cellular objects, that is objects that are obtained from the initial object by a finite composition of pushouts of the form
$$
\begin{tikzcd}
T_M (S^m_k)
\ar[r] \ar[d]
& A
\ar[d]
\\
T_M(0)
\ar[r]
& B
\end{tikzcd}
$$
for some $k \in K$ and $m \in \mathbb Z$, where $S^m_k$ denotes the object of $\categ{Ch}_R^K$ whose $k$-component is the shift of $R$ to the $m^{th}$ degree and whose $k'$-component, for $k' \neq k$, is the zero chain complex. For any $n \in \mathbb Z \sqcup \{\infty\}$, we denote $\categ{Cell}_{M,\leq n}$ the full subcategory of $\categ A$ spanned by finitely cellular objects, with the additional restriction that they are built using a finite composition of pushout as above but so that $m \leq n-1$.

\begin{theorem*}[\ref{theoremcellularpresentation}]
For any $n \in \mathbb Z \sqcup \{\infty\}$
the $\infty$-category $\categ A$ is canonically equivalent to the full subcategory of $\Fun{\categ{Cell}_{M,\leq n}^\op}{\categ S}$ spanned by functors $F$ so that
\begin{enumerate}
    \item $F(\emptyset) \simeq \ast$;
    \item for any pushout square in $\categ{Cell}_{M,\leq n}$ as above
its image through $F$ is a pullback.
\end{enumerate}
\end{theorem*}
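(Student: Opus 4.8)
The plan is to realise $\categ A$ inside a presheaf $\infty$-category via a restricted Yoneda embedding and then to pin down its essential image by a localisation argument. Write $\categ{F}_n$ for the full subcategory of $\Fun{\categ{Cell}_{M,\leq n}^\op}{\categ S}$ cut out by conditions (1) and (2), and consider
$$
\rho \colon \categ A \longrightarrow \Fun{\categ{Cell}_{M,\leq n}^\op}{\categ S}, \qquad \rho(A) = \Mapp{\categ A}{-}{A}.
$$
First I would check that $\rho$ factors through $\categ{F}_n$. Condition (1) holds because $T_M(0)$ is the initial object $\emptyset$ of $\categ A$ (the left adjoint $T_M$ preserves initial objects and $0$ is initial in $\categ{Ch}_R^K$), whence $\Mapp{\categ A}{\emptyset}{A} \simeq \ast$; condition (2) holds because a corepresentable functor carries every colimit, in particular each admissible pushout, to a limit.

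Next I would prove that $\rho$ is fully faithful. The two inputs are that each free cell $T_M(S^m_k)$ is compact and that the free cells generate $\categ A$ under colimits. Compactness follows from the adjunction identity $\Mapp{\categ A}{T_M(S^m_k)}{-} \simeq \Mapp{\categ{Ch}_R^K}{S^m_k}{U^M(-)}$ together with the compactness of $S^m_k$ and the hypothesis that $U^M$ preserves filtered colimits. Generation follows from monadicity: every $A\in\categ A$ is the geometric realisation of the bar resolution $\mathrm{Bar}_\bullet(T_M,M,A)$ by free algebras $T_M(X)$, and each $T_M(X)$ is a colimit of free cells, since $X$ is a colimit of the generators $S^m_k$ of $\categ{Ch}_R^K$ and $T_M$ preserves colimits. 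Thus $\categ{Cell}_{M,\leq n}$ is a dense subcategory of $\categ A$, and density is precisely the assertion that $\rho$ is fully faithful.

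The independence of the statement from $n$ I would deduce from a suspension relation. Because $T_M$ preserves colimits and $S^m_k \simeq \Sigma S^{m-1}_k$, one has an admissible pushout $T_M(S^m_k) \simeq T_M(0)\cup_{T_M(S^{m-1}_k)}T_M(0)$; iterating, every free cell, and hence every finitely cellular object, is produced from cells of degree $\leq n-1$ by admissible pushouts. Consequently a functor satisfying (1) and (2) on $\categ{Cell}_{M,\leq n}$ extends essentially uniquely to one on $\categ{Cell}_{M,\leq n'}$ for $n'\geq n$, so restriction induces an equivalence $\categ{F}_{n'}\simeq\categ{F}_n$, and it suffices to argue for a single convenient value of $n$.

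It remains to show that $\rho$ is essentially surjective onto $\categ{F}_n$, which I expect to be the main obstacle. Present $\categ A$ as the localisation of $\categ P=\Fun{\categ{Cell}_{M,\leq n}^\op}{\categ S}$ along the colimit-preserving left adjoint $\Lambda$ to $\rho$; the essential image of $\rho$ is then the class of $\Lambda$-local objects. The maps inverted by $\Lambda$ include the comparison maps $y(A)\cup_{y(T_M(S^m_k))}y(T_M(0))\to y(B)$ attached to the admissible squares, together with the map $\emptyset_{\categ P}\to y(T_M(0))$ from the initial presheaf, and being local against exactly these maps is precisely conditions (2) and (1). The crux is to prove that these generate the whole saturated class of $\Lambda$-equivalences; equivalently, for $F\in\categ{F}_n$ and $A:=\Lambda(F)$, that the unit $F\to\rho(A)$ is an equivalence. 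Since both $F$ and $\rho(A)$ send admissible pushouts to pullbacks and agree on $\emptyset$, and every object of $\categ{Cell}_{M,\leq n}$ is assembled from $\emptyset$ and the free cells by such pushouts, an induction along the cellular filtration reduces the claim to checking that $F(T_M(S^m_k))\to\Mapp{\categ A}{T_M(S^m_k)}{A}$ is an equivalence on the free cells. This last identification is the genuinely technical point: it requires computing $\Lambda(F)$ and exploiting the compactness of the free cells together with the fact that $U^M$ preserves filtered colimits and geometric realisations, so that mapping spaces out of the free cells commute with the sifted colimits resolving objects of $\categ A$.
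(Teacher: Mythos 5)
Your outline gets the soft parts right (the restricted Yoneda functor $\rho$ lands in $\categ{F}_n$, and locality against the maps attached to the distinguished squares and to the initial object is exactly conditions (1)--(2)), but it has two genuine gaps, and they sit precisely where the paper has to work. The first is the full-faithfulness step. You claim that compactness of the cells plus generation of $\categ A$ under colimits gives density of $\categ{Cell}_{M,\leq n}$, "and density is precisely the assertion that $\rho$ is fully faithful." The second half is a correct definition, but the inference from generation to density is invalid: a family of compact objects can generate under colimits without being dense (already in ordinary algebra, $\mathbb Z$ is a compact projective generator of the category of groups, yet the one-object full subcategory $\{\mathbb Z\}$ is not dense; one must pass to all finitely generated free groups). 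What rescues this in the projectively generated setting (Proposition \ref{propositionconnectivecase}, i.e.\ Lurie's Corollary 4.7.3.18) is that mapping out of the generators commutes with geometric realisations. Here that fails: $\Mapp{\categ A}{T_M(S^m_k)}{-} \simeq \Mapp{\categ{Ch}_R^K}{S^m_k}{U^M(-)}$ commutes with filtered colimits but not with geometric realisations, because $S^m_k$ is compact but not projective in the stable $\infty$-category $\categ{Ch}_R^K$. So density of $\categ{Cell}_{M,\leq n}$ is essentially equivalent to the theorem itself and cannot be quoted as an input.

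The second gap is the crux you yourself flag: that the unit $F \to \rho(\Lambda F)$ is an equivalence on free cells. This is left unproven, and the mechanism you hint at ("mapping spaces out of the free cells commute with the sifted colimits resolving objects of $\categ A$") is false for the same reason as above — space-valued mapping out of a compact object does not commute with geometric realisations, which are the non-filtered half of sifted colimits. The paper circumvents both gaps with a single device, Theorem \ref{thmmain} (proved via Lemmas \ref{lemmaconservative}, \ref{lemmaind} and \ref{lemmaequivalence}): instead of comparing mapping spaces, compose with the colimit-preserving functor $i_!\circ t^\ast$ into $\categ C = \categ{Ch}_R^K$, use that $i_!\circ t^\ast\circ j^\ast \simeq U^M$ preserves geometric realisations, write any product-preserving presheaf as a geometric realisation of objects of $\Ind_\omega(\categ{Cell}_{M,\leq n})$ (where the unit is an equivalence by compactness, Lemma \ref{lemmaind}), and invoke conservativity of $i_!\circ t^\ast\circ q$. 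The proof of Theorem \ref{theoremcellularpresentation} then consists of checking the hypotheses (Lemma \ref{lemmacellular}, Lemma \ref{lemmapresentatio}) and verifying that conservativity, which is where the stable presentation of $\categ{Ch}_R^K$ (Proposition \ref{propositionchainproductpresentation}) enters: conditions (1)--(2) force $F(T_M(S^{m+1}_k)) \simeq \Omega F(T_M(S^m_k))$, so $t^\ast q F$ is a stable functor on $\categ{Loops}_{R,\leq n}^{(K)}$, and such functors jointly detect equivalences. Incidentally, your independence-of-$n$ paragraph has the same soft spot: the suspension pushouts you iterate leave the subcategory $\categ{Cell}_{M,\leq n}$ as soon as the produced sphere exceeds degree $n$, so condition (2) says nothing about them; in the paper no such reduction is needed since the argument works uniformly in $n$.
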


Another type of presentation, that we call cartesian presentation is closer to the presentations shown in the unstable case.
For any $n \in \mathbb Z \sqcup \{\infty\}$, we denote $\categ{Cart}_{M,\leq n}$ the full subcategory of $\categ A$ spanned by finite coproducts of objects of the form $T_M(S^m_k)$ for $m \leq n$ and $k \in K$.

\begin{theorem*}[\ref{theoremcartesianpresentation}]
For any $n \in \mathbb Z \sqcup \{\infty\}$
the $\infty$-category $\categ A$ is canonically equivalent to the full subcategory of $\Fun{\categ{Cart}_{M,\leq n}^\op}{\categ S}$ spanned by functors $F$ so that
\begin{enumerate}
    \item $F$ preserves finite products;
    \item for any pushout square in $\categ{Cart}_{M,\leq n}$ of the form
    $$
\begin{tikzcd}
T_M (X)
\ar[r] \ar[d]
& T_M(0)
\ar[d]
\\
T_M(0)
\ar[r]
& T_M(\Sigma X)
\end{tikzcd}
$$
its image through $F$ is a pullback.
\end{enumerate}
\end{theorem*}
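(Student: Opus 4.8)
The plan is to deduce this statement from the cellular presentation of Theorem~\ref{theoremcellularpresentation}. The first step is to realise $\categ{Cart}_{M,\leq n}$ as a full subcategory of $\categ{Cell}_{M,\leq n}$. Since $T_M$ is a left adjoint it preserves colimits, so $T_M(0)$ is the initial (cellular) object and $T_M(\Sigma X)$ is the pushout of $T_M(0) \leftarrow T_M(X) \to T_M(0)$; taking $X = S^{m-1}_k$ exhibits $T_M(S^m_k)$, for $m \leq n$, as the result of attaching a single cell of dimension $m-1 \leq n-1$ to $T_M(0)$ along the zero map. Likewise, attaching a cell along a zero map identifies $A \sqcup T_M(S^m_k)$ with a cellular pushout, so by induction every finite coproduct of the $T_M(S^m_k)$ is finitely cellular. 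Hence the inclusion $\iota \colon \categ{Cart}_{M,\leq n} \hookrightarrow \categ{Cell}_{M,\leq n}$ is well defined, and the restricted Yoneda embedding into $\Fun{\categ{Cart}_{M,\leq n}^\op}{\categ S}$ factors as $\iota^{*}$ composed with the equivalence of Theorem~\ref{theoremcellularpresentation}.

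Next I would translate the two sets of locality conditions into one another along $\iota^{*}$. The key computation is that a functor $F$ sends the suspension square to a pullback if and only if $F(T_M(\Sigma X)) \simeq \Omega F(T_M(X))$, the based loop object, using that $F(T_M(0)) \simeq \ast$. Because this suspension square is itself one of the generating cellular pushouts, a cellular-local functor automatically satisfies the cartesian condition (2). The cartesian condition (1), preservation of finite products, then follows from the cellular conditions by the same loop-space identity: the fibre over the basepoint of the constant map $F(A) \to F(T_M(S^{m-1}_k))$ computing $F(A \sqcup T_M(S^m_k))$ is $F(A) \times \Omega F(T_M(S^{m-1}_k)) \simeq F(A) \times F(T_M(S^m_k))$. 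This shows $\iota^{*}$ carries cellular-local functors to cartesian-local ones.

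The heart of the argument, and the step I expect to be the main obstacle, is to prove that this restriction $\iota^{*}$ is an equivalence between the subcategory of cellular-local functors and the subcategory of cartesian-local functors; combined with Theorem~\ref{theoremcellularpresentation} this yields the desired equivalence with $\categ A$. The idea is that every finitely cellular object is obtained from objects of $\categ{Cart}_{M,\leq n}$ by a finite sequence of cellular pushouts, each of which a cellular-local functor must send to a pullback with one corner $F(T_M(0)) \simeq \ast$; thus the value of a cellular-local $F$ on any cellular object is forced to be an iterated fibre of values of $F|_{\categ{Cart}_{M,\leq n}}$, and the same holds for natural transformations. I would formalise this as the statement that cellular-local functors are right Kan extended from $\categ{Cart}_{M,\leq n}$, i.e. the unit $F \to \iota_{*}\iota^{*}F$ is an equivalence for cellular-local $F$, together with the claim that $\iota_{*}$ sends cartesian-local functors to cellular-local ones; these two facts make $\iota^{*}$ and $\iota_{*}$ mutually inverse on the local subcategories.

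The delicate point is essential surjectivity with full homotopy coherence: starting from a cartesian-local $G$ one must produce a cellular-local extension and verify that it sends every cellular pushout — not merely the generating suspension squares — to a pullback, coherently and independently of the chosen cellular decomposition. I would handle this by an induction along the cellular filtration $\categ{Cart}_{M,\leq n} = \categ{C}_0 \subseteq \categ{C}_1 \subseteq \cdots$, where $\categ{C}_{j+1}$ adjoins the pushouts of objects of $\categ{C}_j$ along a single cell, showing at each stage that restriction is an equivalence on local functors by a base-change argument for the relevant pullbacks, and then passing to the union. The repeated use of the loop-space identity to re-express attaching maps, and the need to keep the iterated fibres natural across all decompositions, is what makes this the technical core of the proof.
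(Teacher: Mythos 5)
Your first two steps are fine: $\categ{Cart}_{M,\leq n}$ does sit inside $\categ{Cell}_{M,\leq n}$ (since $T_M$ preserves pushouts, $T_M(S^m_k)$ is the cofibre of $T_M(S^{m-1}_k)\to T_M(0)$, and $\categ{Cell}_{M,\leq n}$ is closed under finite coproducts by Lemma \ref{lemmacellular}), and a cellular-local functor does restrict to a cartesian-local one (modulo a small omission: in condition (2) of the cartesian theorem $X$ is a finite coproduct of spheres, not a single $S^{m}_k$, so you need to decompose the suspension square into single-cell attachments and paste pullbacks). The genuine gap is exactly at what you call the heart: the claim that for cellular-local $F$ the unit $F \to \iota_{*}\iota^{*}F$ is an equivalence, and that $\iota_{*}$ preserves locality. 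Given the cellular theorem, this claim \emph{is} the cartesian theorem (it is equivalent to full faithfulness of $\categ A \to \Psh(\categ{Cart}_{M,\leq n})$ together with identification of the essential image), so nothing has been reduced; and your proposed tool for it --- cofinality/base-change along the skeletal filtration --- fails on the very first cell attachment. Concretely, $(\iota_{*}G)(B)$ is the limit of $G$ over the comma category $\categ{Cart}_{M,\leq n}/B$, and for your induction to run, the three-object span $T_M(0) \leftarrow T_M(S^{m}_k) \to A$ over $B$ would have to be coinitial in that comma category. Take $M = \mathrm{Id}$, $K=\ast$, $R=\mathbb Z$, $n=\infty$ and $B = \mathrm{cofib}\bigl(S^0 \xrightarrow{2} S^0\bigr)$ (i.e.\ $\mathbb Z/2$ in degree $0$): the coinitiality criterion of \cite[Theorem 4.1.3.1]{Lurie17} must be tested at the object $q : S^0 \to B$ given by the cofibre map, and the relevant slice category is homotopy-discrete with infinitely many connected components (lifts of $q$ across $S^0 \to B$ are the odd multiplications, lifts across the null map are the even ones), hence not weakly contractible. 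So the Kan-extension value is not computed by the naive iterated-fibre formula via any cofinality argument, and your sketch offers nothing in its place.

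This is not a repairable technicality within your framework: the reason the heart step is nevertheless true is not presheaf-level category theory but monadicity. In the paper, the full faithfulness you need is exactly what Theorem \ref{thmmain} supplies, and its proof (Lemmas \ref{lemmaconservative}, \ref{lemmaind} and \ref{lemmaequivalence}) resolves each algebra by a monadic bar construction and uses that $U^M$ is conservative, preserves geometric realisations and filtered colimits, together with the $\Ind_{\omega}$-arguments; your proposal never invokes any of this beyond $T_M$ preserving colimits, which is why it cannot close. Note also that the paper does not derive Theorem \ref{theoremcartesianpresentation} from Theorem \ref{theoremcellularpresentation}: the two are parallel, independent applications of Theorem \ref{thmmain}, taking $\categ B = \categ{Cart}_{M,\leq n}$ in the cartesian case. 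There the only work is to verify the hypotheses (closure under finite coproducts holds by definition; $\omega$-smallness as in Lemma \ref{lemmacellular}; reflectivity and stability under filtered colimits of the local subcategory by Lemma \ref{lemmapresentatio}) and condition (2) of Theorem \ref{thmmain}: restriction to $\Psh(\categ{Loops}^{(K)}_{R,\leq n})$ lands in stable presheaves (this is precisely your loop-space identity $F(T_M(\Sigma X)) \simeq \Omega F(T_M(X))$) and is conservative, which is immediate since every object of $\categ{Cart}_{M,\leq n}$ is a finite coproduct of objects $T_M(S^m_k)$ and local functors preserve finite products. That is the route you should take.
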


These two theorems are direct consequences of a single result. Indeed, let us consider a monadic functor
$U^M$ from an $\infty$-category $\categ A$ to an $\infty$-category $\categ C$ that preserves sifted colimits.
We assume that $\categ C$ is $\omega$-presentable in the sense that there exists a small $\infty$-category $\categ D$ so that $\categ C$ is a reflective subcategory
of $\Fun{\categ D^\op}{\categ S}$ stable through filtered colimits.
Thus we have a sequence of adjunctions
$$
\begin{tikzcd}
\Fun{\categ D^\op}{\categ S} \ar[rr, shift left, "i_!"]
&& \categ C \ar[ll, shift left, "i^\ast"]
\ar[rr, shift left, "T_M"]
&& \categ A . \ar[ll, shift left, "U^M"]
\end{tikzcd}
$$
Then, let us consider a full subcategory $j : \categ B \to \categ A$ so that the composite functor from $\categ D$ to $\categ A$
factorises through $j$. Let us also suppose that $\categ B$ contains only $\omega$-small objects and that it is stable through
finite coproducts. We obtain a square of right adjoint functors
$$
\begin{tikzcd}
\Fun{\categ B^\op}{\categ S}
\ar[r,"t^\ast"]
&\Fun{\categ D^\op}{\categ S}
\\
\categ A
\ar[u, "j^\ast"] \ar[r,swap, "U^M"]
& \categ{C}
\ar[u, "i^\ast"]
\end{tikzcd}
$$
Let $\categ E$ be a reflective subcategory of $\Fun{\categ B^\op}{\categ S}$. If $\categ E$ is stable through filtered colimits, if its functors preserve finite products and if it contains the image of $\categ A$ in $\Fun{\categ D^\op}{\categ S}$, then one has the following result.

\begin{theorem*}[\ref{thmmain}]
The following assertions are equivalent.
\begin{enumerate}
    \item the functor from $\categ A$ to $\categ E$ is an equivalence;
    \item the composite functor
    $$
\categ E \xrightarrow{q} \Fun{\categ B^\op}{\categ S} \xrightarrow{t^\ast} \Fun{\categ D^\op}{\categ S}
$$
is conservative and sends elements of $\categ E$ to the essential image of the functor $i^\ast$ from $\categ C$ to $\Fun{\categ D^\op}{\categ S}$;
    \item the composite functor
$$
\categ E \xrightarrow{q} \Fun{\categ B^\op}{\categ S} \xrightarrow{t^\ast} \Fun{\categ D^\op}{\categ S} \xrightarrow{i_!} \categ C
$$
is conservative.
\end{enumerate}
\end{theorem*}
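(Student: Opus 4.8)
The plan is to realise the functor in $(1)$ as a fully faithful right adjoint and to reduce all three conditions to a single conservativity statement about its left adjoint. First I would record the formal structure. Since $\categ E$ contains the essential image of the restricted Yoneda functor $j^\ast$, the latter corestricts to a functor $\Phi\colon\categ A\to\categ E$ with $q\circ\Phi\simeq j^\ast$, and this is the functor appearing in $(1)$. As $j^\ast$ admits the left Kan extension $j_!$ as a left adjoint and $q$ is the inclusion of a full subcategory, $\Phi$ has the left adjoint $\Phi^L:=j_!\circ q\colon\categ E\to\categ A$, since $\mathrm{Map}_{\categ E}(E,\Phi A)\simeq\mathrm{Map}(qE,j^\ast A)\simeq\mathrm{Map}_{\categ A}(j_! qE,A)$. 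The commuting square of right adjoints gives $t^\ast\circ j^\ast\simeq i^\ast\circ U^M$, whence $\Psi_3\circ\Phi=i_! t^\ast q\Phi\simeq i_! t^\ast j^\ast\simeq i_! i^\ast U^M\simeq U^M$, using that $i^\ast$ is fully faithful so $i_! i^\ast\simeq\mathrm{Id}$; in particular $\Psi_3\circ\Phi\simeq U^M$ is conservative.

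The implications $(1)\Rightarrow(2)\Rightarrow(3)$ are then formal. If $\Phi$ is an equivalence, then $t^\ast q\simeq(t^\ast q\Phi)\Phi^{-1}\simeq i^\ast U^M\Phi^{-1}$, which manifestly factors through $i^\ast$ and is conservative ($i^\ast$ fully faithful, $U^M$ conservative), giving $(2)$. Granting $(2)$, for every $E$ the unit of $i_!\dashv i^\ast$ identifies the image of $t^\ast q$ naturally with $i^\ast i_! t^\ast qE=i^\ast\Psi_3 E$, because that image lies in the essential image of $i^\ast$; hence $\Psi_3 f$ invertible forces $i^\ast\Psi_3 f$, and therefore the image of $f$ under $t^\ast q$, to be invertible, so $f$ is invertible and $\Psi_3$ is conservative, giving $(3)$.

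The heart is $(3)\Rightarrow(1)$, for which I would prove the comparison $\Psi_3\simeq U^M\circ\Phi^L$ on $\categ E$. I construct the natural transformation $\alpha\colon i_! t^\ast\Rightarrow U^M j_!$ of functors $\Fun{\categ B^\op}{\categ S}\to\categ C$ as the $(i_!\dashv i^\ast)$-adjoint of $t^\ast(\eta)\colon t^\ast\Rightarrow t^\ast j^\ast j_!\simeq i^\ast U^M j_!$, where $\eta$ is the unit of $j_!\dashv j^\ast$. On a representable $y_{\categ B}(b)$ the unit $y_{\categ B}(b)\to j^\ast j_! y_{\categ B}(b)\simeq j^\ast j(b)$ is an equivalence ($j$ being fully faithful), so $\alpha_{y_{\categ B}(b)}$ is an equivalence. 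Now $i_! t^\ast$ preserves all colimits while $U^M j_!$ preserves sifted colimits ($j_!$ is a left adjoint and $U^M$ preserves sifted colimits by hypothesis); since $\categ B$ is closed under finite coproducts, every finite-product-preserving presheaf, in particular every $qE$ for $E\in\categ E$, is a sifted colimit of representables computed in $\Fun{\categ B^\op}{\categ S}$. Hence $\alpha_{qE}$ is an equivalence, i.e. $\Psi_3\simeq U^M\Phi^L$ on $\categ E$. Combined with conservativity of $U^M$, this shows $\Psi_3$ is conservative if and only if $\Phi^L$ is.

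It remains to see that $\Phi$ is fully faithful, a fact independent of $(3)$: equivalently the counit $\epsilon\colon\Phi^L\Phi\simeq j_! j^\ast\to\mathrm{Id}_{\categ A}$ is an equivalence, and since $U^M$ is conservative it suffices to check this after $U^M$. The triangle identities for $j_!\dashv j^\ast$ identify $U^M\epsilon_A$ with the inverse of the equivalence $\alpha_{j^\ast A}$ under $i_! t^\ast j^\ast A\simeq i_! i^\ast U^M A\simeq U^M A$, so $U^M\epsilon_A$ is an equivalence. Granting $(3)$, $\Phi^L$ is conservative by the previous paragraph; since $\Phi$ is a fully faithful right adjoint, its reflector $\Phi^L$ inverts the unit maps $E\to\Phi\Phi^L E$, so conservativity of $\Phi^L$ forces these units to be equivalences for all $E$, whence $\Phi\Phi^L\simeq\mathrm{Id}_{\categ E}$ and $\Phi$ is an equivalence, proving $(1)$. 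The main obstacle is the comparison lemma: constructing $\alpha$ coherently and, above all, establishing that finite-product-preserving presheaves on $\categ B$ are sifted colimits of representables — the $\infty$-categorical Lawvere-theory fact, which is where closure of $\categ B$ under finite coproducts and the $\omega$-smallness of its objects enter — so that the two sifted-colimit-preserving functors can be compared termwise; verifying the triangle-identity coherence needed for full faithfulness is the fiddliest remaining point.
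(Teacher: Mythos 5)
Your proof is correct, and for the crucial implication $(3)\Rightarrow(1)$ it takes a genuinely different route from the paper; the formal implications $(1)\Rightarrow(2)\Rightarrow(3)$ coincide with the paper's. The paper splits the equivalence criterion as [$r$ conservative] $+$ [unit $\id_{\categ E}\to rl$ invertible]: conservativity of $r$ is immediate from $U^M\simeq i_!\circ t^\ast\circ q\circ r$ and conservativity of $U^M$ (Lemma \ref{lemmaconservative}), while hypothesis (3) is used inside the unit statement (Lemma \ref{lemmaequivalence}), where each $F\in\categ E$ is resolved, via the cited Lemma 5.5.8.14 of Lurie, as a geometric realisation of objects of $\Ind_\omega(\categ B)$, the unit is invertible on such objects by $\omega$-smallness (Lemma \ref{lemmaind}), and the problematic interchange map $\varinjlim_n j^\ast j_!(G_n)\to j^\ast(\varinjlim_n j_!(G_n))$ is shown invertible by applying $i_!t^\ast$ and invoking (3) together with the fact that $i_!t^\ast j^\ast\simeq U^M$ preserves geometric realisations. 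You split the criterion dually as [counit $lr\to\id_{\categ A}$ invertible] $+$ [$l$ conservative]: your comparison $\alpha : i_!t^\ast\Rightarrow U^M j_!$, invertible on representables and hence on all of $\Psh_\Sigma(\categ B)$ because both sides preserve sifted colimits, gives \emph{unconditionally} both the identification $i_!t^\ast q\simeq U^M\circ l$ and, via a triangle identity and conservativity of $U^M$, full faithfulness of $r$; hypothesis (3) then enters only in the final, purely formal step. What each buys: your route yields a stronger unconditional statement (under the standing hypotheses the adjunction $l\dashv r$ is always a coreflective localisation, and condition (3) is literally conservativity of $U^M\circ l$), it isolates where (3) is needed, and it in fact never uses the $\omega$-smallness of the objects of $\categ B$, which the paper needs for Lemma \ref{lemmaind} --- so your closing remark that smallness enters through the Lawvere-theory fact is misplaced: that fact needs only closure of $\categ B$ under finite coproducts. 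The paper's route, conversely, avoids constructing and checking the coherence of $\alpha$, trading it for a hands-on colimit computation under the conservativity hypothesis. One cosmetic repair: what one gets from Lurie is that $\Psh_\Sigma(\categ B)$ is the smallest full subcategory of $\Psh(\categ B)$ containing the representables and closed under sifted colimits (equivalently, closed under geometric realisations of $\Ind_\omega$-objects), not that every object is a single sifted colimit of representables; your argument should therefore be phrased as ``the class of presheaves on which $\alpha$ is invertible contains the representables and is closed under sifted colimits,'' which is exactly what your two preservation statements provide.
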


Finally, a recurring context where one can apply these results is provided by the category of algebras over a $K$-coloured operad $\operad P$ enriched in chain complexes of $R$-modules. Indeed, if $\operad P$ is $\Sigma$-cofibrant and if the model structure on products of chain complexes may be transferred to the category of $\categ P$-algebras along the adjunction
$$
\begin{tikzcd}
\categ{Chain}_R^K 
\ar[rr, shift left ,"T_P"]
&&\catofalg{\categ P}
\ar[ll, shift left ,"U^P"]
\end{tikzcd}
$$
then the resulting adjunction between the $\infty$-categories obtained from localisation at weak equivalences is monadic and its right adjoint preserves filtered colimits and geometric realisations.

\subsection*{Layout}

In the first section, we recall definitions and results about presentable $\infty$-cateogories and monadic functors. Moreover, we prove Theorem \ref{thmmain}. The second section deals with chain complexes. We describe there presentations of the $\infty$-categories of chain complexes of modules over a ring $R$ and of some $\infty$-categories of algebras in chain complexes. This latter presentations correspond to Theorem \ref{theoremcellularpresentation} and
Theorem \ref{theoremcartesianpresentation}. Finally the appendix gathers some older results about chain complexes in non negative degrees.

\subsection*{Acknowledgment}

I would like to thank Edouard Balzin for his comments and Gabriele Vezzosi for presenting me the article \cite{Lurie11bis} which has inspired this paper.

\subsection*{Notations and conventions}

\begin{itemize}
\itemt For us, $\infty$-category will mean locally small $(\infty,1)$-category. Since we mainly use references from the books Higher Topos Theory \cite{Lurie17} and Higher Algebra \cite{Lurie18} by Lurie, we will assume that $\infty$-categories are encoded using quasi-categories.
\itemt We consider two universes $\mathcal U < \mathcal V$. Small will mean $\mathcal U$-small. Sets and $\infty$-groupoids will be small while many categories and $\infty$-categories considered will be $\mathcal U$-large, but still $\mathcal V$-small.
    \itemt $\categ S$ will denote the $\infty$-category of groupoids (given for instance by the simplicial nerve of the simplicial category of Kan complexes).
    \itemt The first infinite cardinal will be denoted $\omega$. We will use the denomination $\omega$-small and $\omega$-presentable instead of compact and compactly generated. Moreover, a functor that preserves filtered colimits will be called $\omega$-accessible.
    \itemt As in \cite{Lurie17}, geometric realisation means colimit of a simplicial diagram.
    \itemt In a stable $\infty$-category, the loop functor is denoted $\Omega$ and the suspension functor is denoted $\Sigma$.
\end{itemize}

\section{Presentable infinity-categories and algebras}

In this section, we recall some definitions and results about presentable $\infty$-categories and monadic adjunctions. Then, we prove our main result that gives conditions for an $\infty$-category to present an $\infty$-category of algebras in another $\infty$-category equipped with a presentation.

\subsection{Presheaves infinity categories and their subcategories}

\begin{definition}
Given a small $\infty$-category $\categ D$, its presheaves $\infty$-category $\Psh(\categ D)$ is the mapping $\infty$-category from $\categ D^{\op}$ to $\categ S$
$$
\Psh(\categ D) = \Fun{\categ D^{\op}}{\categ S} .
$$
\end{definition}

\begin{proposition}\cite[Theorem 5.1.5.6, Corollary 5.5.2.9, Remark 5.5.2.10]{Lurie17}
The presheaves $\infty$-category $\Psh(\categ D)$ is the free cocomplete $\infty$-category built from $\categ D$ in the sense that it is cocomplete and for any cocomplete $\infty$-category $\categ C$, the two functors
$$
\mathrm{Fun}^{\mathrm{L}}\left({\Psh(\categ D)},{\categ C}\right) \hookrightarrow
\mathrm{Fun}^{\mathrm{cc}}\left({\Psh(\categ D)},{\categ C}\right) \to \Fun{\categ D}{\categ C}
$$
are both equivalences where $\mathrm{Fun}^{\mathrm{L}}\left({\Psh(\categ D)},{\categ C}\right)$ and $
\mathrm{Fun}^{\mathrm{cc}}\left({\Psh(\categ D)},{\categ C}\right)$ denote respectively the full subcategories of $\Fun{\Psh(\categ D)}{\categ C}$ spanned respectively by left adjoint functors and functors that preserve small colimits.
\end{proposition}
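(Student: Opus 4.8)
The plan is to exhibit both functors concretely and construct their inverses, the whole argument turning on the Yoneda embedding and left Kan extension. First I would record the cocompleteness claim: since $\Psh(\categ D) = \Fun{\categ D^\op}{\categ S}$ and $\categ S$ admits all small colimits, and since colimits in a functor $\infty$-category with cocomplete target are computed pointwise, $\Psh(\categ D)$ inherits all small colimits. The central object is then the Yoneda embedding $j : \categ D \to \Psh(\categ D)$, which is fully faithful; the restriction functor appearing in the statement is precisely $j^\ast = (-)\circ j$.

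Next I would establish the equivalence $\mathrm{Fun}^{\mathrm{cc}}(\Psh(\categ D), \categ C) \to \Fun{\categ D}{\categ C}$. Given $f : \categ D \to \categ C$ with $\categ C$ cocomplete, the left Kan extension $\mathrm{Lan}_j f$ exists and is computed pointwise by $(\mathrm{Lan}_j f)(F) \simeq \colim_{(d,x)\in \categ{D}_{/F}} f(d)$, where $\categ{D}_{/F}$ is the category of elements of $F$. I would then invoke the density (co-Yoneda) theorem: every presheaf $F$ is the colimit of its tautological diagram of representables, $F \simeq \colim_{(d,x)} j(d)$. This yields two things at once. On one side, the colimit formula shows $\mathrm{Lan}_j f$ preserves colimits, so it lands in $\mathrm{Fun}^{\mathrm{cc}}$, while fully faithfulness of $j$ gives $(\mathrm{Lan}_j f)\circ j \simeq f$, i.e. $j^\ast \circ \mathrm{Lan}_j \simeq \id$. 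On the other side, any colimit-preserving $G$ satisfies $G(F) \simeq G(\colim j(d)) \simeq \colim G(j(d))$, which is exactly the colimit computing $\mathrm{Lan}_j(j^\ast G)$ at $F$, so $\mathrm{Lan}_j \circ j^\ast \simeq \id$. Hence $j^\ast$ and $\mathrm{Lan}_j$ are mutually inverse.

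It then remains to show that the inclusion $\mathrm{Fun}^{\mathrm{L}}(\Psh(\categ D), \categ C) \hookrightarrow \mathrm{Fun}^{\mathrm{cc}}(\Psh(\categ D), \categ C)$ is an equivalence. It is fully faithful because any left adjoint preserves colimits, so the only issue is essential surjectivity, i.e. that every colimit-preserving functor out of $\Psh(\categ D)$ admits a right adjoint. Here I would use that $\Psh(\categ D)$ is presentable and apply the $\infty$-categorical adjoint functor theorem (\cite{Lurie17}, Corollary 5.5.2.9 and Remark 5.5.2.10), which guarantees that a colimit-preserving functor out of a presentable $\infty$-category is automatically a left adjoint. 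This identifies $\mathrm{Fun}^{\mathrm{cc}}$ with $\mathrm{Fun}^{\mathrm{L}}$ and finishes the proof.

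The delicate point is not the heuristic above but its coherent realisation in the quasicategorical model. One must know that the category of elements $\categ{D}_{/F}$ genuinely indexes a diagram of representables whose colimit recovers $F$ \emph{functorially} in $F$, and that left Kan extension along $j$ assembles into an honest functor that is cocontinuous and adjoint to $j^\ast$. These coherence statements are exactly the content of the density theorem and of the theory of colimits of presheaves developed in \cite{Lurie17}; accordingly I would either cite them directly, as the proposition does, or unwind their proofs, rather than attempt to reconstruct the homotopy coherences by hand.
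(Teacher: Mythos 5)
Your proposal is correct and matches the paper's approach, since the paper offers no independent proof of this proposition but simply cites \cite[Theorem 5.1.5.6, Corollary 5.5.2.9, Remark 5.5.2.10]{Lurie17}: your sketch (restriction along the Yoneda embedding inverted by left Kan extension via the density theorem, plus the adjoint functor theorem to identify $\mathrm{Fun}^{\mathrm{L}}$ with $\mathrm{Fun}^{\mathrm{cc}}$) is precisely the argument contained in those cited results. As you likewise defer the quasicategorical coherence details to the same references, the two treatments coincide.
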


\begin{proposition}\cite[Corollary 5.1.2.3]{Lurie17}
The presheaves $\infty$-category $\Psh(\categ D)$ is also complete. Moreover, the limits and colimits are computed objectwise in the sense that for any object $d \in \categ D$, the evaluation at $d$ functor
$$
\Psh(\categ D) = \Fun{\categ D^{\op}}{\categ S}\xrightarrow{F \mapsto F(d)} \categ S
$$
preserves limits and colimits.
\end{proposition}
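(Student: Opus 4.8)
The plan is to deduce both the completeness and the objectwise description from two inputs only: the completeness and cocompleteness of $\categ S$, and the general principle that $(\text{co})$limits in a functor $\infty$-category are formed in the target. Write $\mathrm{ev}_d : \Psh(\categ D) \to \categ S$ for evaluation at an object $d$, and fix an arbitrary small $\infty$-category $\categ K$ indexing the diagrams; it is enough to treat a single such $\categ K$ and then conclude for all small diagrams. Since $\categ S$ admits all small limits and colimits \cite{Lurie17}, the constant-diagram functor $\delta_{\categ S} : \categ S \to \Fun{\categ K}{\categ S}$ admits both a left adjoint $\colim_{\categ K}$ and a right adjoint $\lim_{\categ K}$.

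First I would set up the currying comparison
$$
\Fun{\categ K}{\Psh(\categ D)} = \Fun{\categ K}{\Fun{\categ D^\op}{\categ S}} \simeq \Fun{\categ K \times \categ D^\op}{\categ S} \simeq \Fun{\categ D^\op}{\Fun{\categ K}{\categ S}},
$$
and verify that under this equivalence the constant-diagram functor $\delta_{\Psh(\categ D)}$ is identified with postcomposition $(\delta_{\categ S})_\ast$ by $\delta_{\categ S}$. Intuitively, forming a constant diagram in the $\categ K$-direction commutes with evaluating in the $\categ D^\op$-direction; making this intertwining precise is the first technical point.

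Next I would use that postcomposition preserves adjunctions: if $F \dashv G$ then $F_\ast \dashv G_\ast$ \cite{Lurie17}. Applied to the chain $\colim_{\categ K} \dashv \delta_{\categ S} \dashv \lim_{\categ K}$ this yields
$$
(\colim_{\categ K})_\ast \dashv (\delta_{\categ S})_\ast \dashv (\lim_{\categ K})_\ast .
$$
Transporting along the comparison above, $\delta_{\Psh(\categ D)}$ thereby acquires both a left and a right adjoint. The existence of a right (resp.\ left) adjoint to $\delta_{\Psh(\categ D)}$ is exactly the assertion that $\Psh(\categ D)$ admits all $\categ K$-indexed limits (resp.\ colimits); as $\categ K$ was arbitrary, $\Psh(\categ D)$ is complete (and cocomplete). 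Furthermore these adjoints are computed by applying $\lim_{\categ K}$, resp.\ $\colim_{\categ K}$, objectwise, which is precisely the statement that each $\mathrm{ev}_d$ commutes with $\categ K$-limits and $\categ K$-colimits; hence every $\mathrm{ev}_d$ preserves all small limits and colimits.

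The main obstacle is the bookkeeping in the two preparatory steps — the identification of $\delta_{\Psh(\categ D)}$ with $(\delta_{\categ S})_\ast$ through the currying equivalence, and the stability of adjunctions under postcomposition — rather than anything about $\categ S$ itself, whose (co)completeness is taken as known. An alternative and more hands-on route is to characterize a cone $\bar p : \categ K^{\triangleright} \to \Psh(\categ D)$ as a colimit cone by the mapping-space criterion and to show, using that equivalences in $\Psh(\categ D)$ are detected objectwise, that $\bar p$ is colimiting if and only if each $\mathrm{ev}_d\,\bar p$ is a colimit cone in $\categ S$, with the dual statement for limits; this is the content of \cite[Proposition 5.1.2.2]{Lurie17}, of which the present statement is the special case $\categ C = \categ S$.
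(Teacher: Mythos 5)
Your argument is correct, but note first that the paper offers no proof of its own here: it simply cites \cite[Corollary 5.1.2.3]{Lurie17}, whose proof in that reference runs through Proposition 5.1.2.2 --- exactly the mapping-space, pointwise-detection route you sketch as an alternative in your final sentence. Your primary argument is a genuinely different, entirely formal one, and it works: in the quasi-categorical model the currying identification $\Fun{\categ K}{\Fun{\categ D^\op}{\categ S}} \cong \Fun{\categ K \times \categ D^\op}{\categ S} \cong \Fun{\categ D^\op}{\Fun{\categ K}{\categ S}}$ is a strict isomorphism of simplicial sets (cartesian closure of simplicial sets), under which the diagonal $\delta_{\Psh(\categ D)}$ is visibly identified with $(\delta_{\categ S})_\ast$, so your first ``technical point'' is actually cost-free; and the stability of adjunctions under postcomposition follows from the unit/counit characterization of adjunctions in \cite[Section 5.2.2]{Lurie17}, since a unit transformation for $\colim_{\categ K} \dashv \delta_{\categ S}$ induces one for the postcomposed pair on $\Fun{\categ D^\op}{-}$. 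Combined with the standard equivalence between ``$\delta$ admits a right (resp.\ left) adjoint'' and ``all $\categ K$-indexed limits (resp.\ colimits) exist,'' this gives existence and the objectwise formula in one stroke, which is all the proposition as stated requires. The trade-off relative to the cited route: Lurie's Proposition 5.1.2.2 proves the stronger \emph{detection} statement --- a cone in $\Psh(\categ D)$ is (co)limiting if and only if each $\mathrm{ev}_d$ of it is --- whereas your adjunction argument directly yields only preservation; detection then needs the extra (easy) remark that equivalences in $\Psh(\categ D)$ are checked objectwise, so a cone that is objectwise (co)limiting is equivalent to the one supplied by the adjoint.
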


\begin{definition}
Let $\kappa$ be a regular cardinal. The $\kappa$ ind completion $\Ind_{\kappa}(\categ D)$ of a small $\infty$-category $\categ D$ is the full subcategory of $\Psh(\categ D)$ spanned by functors $F : \categ D^{\op}\to \categ S$ whose Grothendieck construction (that is the source of the right fibration that classifies this functor) is a filtered $\infty$-category.
\end{definition}

\begin{proposition}\cite[Corollary 5.3.5.4]{Lurie17}
A functor $F \in \Psh(\categ D)$ belongs to $\Ind_{\kappa}(\categ D)$ if and only if there exists a $\kappa$-filtered diagram $\categ I \to \categ D$
so that $F$ is the colimit in $\Psh(\categ D)$ of the diagram
$$
\categ I \to \categ D \hookrightarrow \Psh(\categ D).
$$
\end{proposition}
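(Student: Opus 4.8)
The plan is to identify the Grothendieck construction appearing in the definition of $\Ind_{\kappa}(\categ D)$ with the comma $\infty$-category $\categ{D}_{/F}$, and then to play the two colimit presentations of $F$ against each other. Writing $y : \categ D \to \Psh(\categ D)$ for the Yoneda embedding, the unstraightening of $F : \categ D^{\op}\to \categ S$ is exactly the right fibration $\categ{D}_{/F} \to \categ D$ whose objects are pairs $(d,x)$ with $x \in F(d)$, equivalently maps $y(d) \to F$; so by definition $F \in \Ind_{\kappa}(\categ D)$ precisely when $\categ{D}_{/F}$ is $\kappa$-filtered. The whole statement then reduces to showing that $\categ{D}_{/F}$ is $\kappa$-filtered if and only if $F$ is a $\kappa$-filtered colimit of representables.

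For the \emph{only if} direction I would invoke the co-Yoneda lemma \cite[Lemma 5.1.5.3]{Lurie17}: for every $F$ the canonical cocone exhibits $F$ as the colimit in $\Psh(\categ D)$ of the tautological diagram $\categ{D}_{/F} \to \categ D \xrightarrow{y} \Psh(\categ D)$. Hence, if $\categ{D}_{/F}$ is $\kappa$-filtered, this already displays $F$ as a $\kappa$-filtered colimit of representables, and we may simply take $\categ I = \categ{D}_{/F}$.

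For the \emph{if} direction, suppose $F \simeq \colim_{\categ I}\bigl(\categ I \xrightarrow{q} \categ D \xrightarrow{y} \Psh(\categ D)\bigr)$ with $\categ I$ $\kappa$-filtered. The colimit cocone supplies, for each $i \in \categ I$, a map $y(q(i)) \to F$, i.e. an element of $F(q(i))$, hence an object of $\categ{D}_{/F}$; this assembles into a functor $g : \categ I \to \categ{D}_{/F}$ over $\categ D$. I claim $g$ is cofinal. Granting this, $\categ{D}_{/F}$ is $\kappa$-filtered by the standard characterization that an $\infty$-category $\categ J$ is $\kappa$-filtered exactly when $\colim_{\categ J} : \Fun{\categ J}{\categ S} \to \categ S$ preserves $\kappa$-small limits \cite[Proposition 5.3.3.3]{Lurie17}: since colimits in $\Psh(\categ D)$ are computed objectwise and precomposition along $g$ preserves limits, cofinality of $g$ transports this property from $\categ I$ via the chain $\colim_{\categ{D}_{/F}} \lim_{\alpha} H_{\alpha} \simeq \colim_{\categ I} \lim_{\alpha}(H_{\alpha}\circ g) \simeq \lim_{\alpha}\colim_{\categ I}(H_{\alpha}\circ g) \simeq \lim_{\alpha}\colim_{\categ{D}_{/F}} H_{\alpha}$ for any $\kappa$-small family $\{H_{\alpha}\}$ of functors $\categ{D}_{/F} \to \categ S$.

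The crux — and the step I expect to be the main obstacle — is the cofinality of $g : \categ I \to \categ{D}_{/F}$. Here I would use Quillen's Theorem A in its $\infty$-categorical form \cite[Theorem 4.1.3.1]{Lurie17}: $g$ is cofinal if and only if for every $(d,x) \in \categ{D}_{/F}$ the fibre product $\categ I \times_{\categ{D}_{/F}} (\categ{D}_{/F})_{(d,x)/}$ is weakly contractible. Unwinding the definition of $\categ{D}_{/F}$, this comma $\infty$-category is the category of factorisations of $x \in F(d)$ through the colimit cocone, namely pairs $(i,\ d \to q(i))$ inducing $x$; its classifying space is the homotopy fibre over $x$ of the map $\colim_{i} \Map_{\categ D}(d,q(i)) \to F(d)$, which is an equivalence precisely because the cocone exhibits $F$ as the colimit and colimits of presheaves are objectwise, so each such fibre is contractible. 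Upgrading this from a statement about fibres of spaces to the weak contractibility of the full comma $\infty$-category — making the factorisation data coherent rather than merely controlling connected components — is the technical heart; once it is in place, both implications are complete.
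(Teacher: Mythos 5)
The paper offers no proof of this statement at all: it is quoted verbatim from \cite[Corollary 5.3.5.4]{Lurie17}, so the only benchmark is Lurie's own argument, which your sketch in fact parallels closely (the tautological colimit over $\categ{D}_{/F}$ via \cite[Lemma 5.1.5.3]{Lurie17} in one direction, a cofinality argument via Theorem A in the other). Your outline is correct, and the step you flag as the ``technical heart'' is indeed the crux --- but closing it requires no new idea, only the straightening machinery you have already implicitly invoked. Concretely: the comma $\infty$-category $\categ I \times_{\categ{D}_{/F}} (\categ{D}_{/F})_{(d,x)/}$ is the total space of the left fibration over $\categ I$ classified by $i \mapsto \Mapp{\categ{D}_{/F}}{(d,x)}{g(i)}$, so by \cite[Corollary 3.3.4.6]{Lurie17} its weak homotopy type is $\varinjlim_{i} \Mapp{\categ{D}_{/F}}{(d,x)}{g(i)}$. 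Since $\categ{D}_{/F} \to \categ D$ is a right fibration, each of these mapping spaces is the fibre over $x$ of $\Mapp{\categ D}{d}{q(i)} \to F(d)$; and since $\categ I$ is $\kappa$-filtered, hence filtered, the colimit of these fibres is the fibre of the colimit, i.e.\ of the map $\varinjlim_i \Mapp{\categ D}{d}{q(i)} \to F(d)$, which is an equivalence because colimits of presheaves are objectwise. The fibre is therefore contractible, Theorem A applies, and your proof is complete --- the coherence of the factorisation data you worried about is exactly what the straightening equivalence supplies for free.

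Two smaller points deserve a word. First, the paper's definition of $\Ind_\kappa(\categ D)$ literally says ``filtered'' where it must say ``$\kappa$-filtered''; your reading silently corrects this, and it is the correct reading (otherwise the proposition fails for $\kappa > \omega$). Second, your appeal to \cite[Proposition 5.3.3.3]{Lurie17} covers the direction ``$\kappa$-filtered implies $\varinjlim$ preserves $\kappa$-small limits'', which you use for $\categ I$; but for $\categ{D}_{/F}$ you need the \emph{converse} of that characterization, which is true and standard but not what that proposition states --- it follows, for instance, from the observation that $\varinjlim_{j \in \categ J} \Mapp{\categ J}{x}{j}$ is the classifying space of $\categ J_{x/}$, which is contractible since $\categ J_{x/}$ has an initial object, so that preservation of $\kappa$-small limits by $\varinjlim_{\categ J}$ forces the relevant cocone spaces to be nonempty. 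With that citation repaired and the fibre computation above inserted, the argument is a faithful reconstruction of the proof the paper delegates to Lurie.
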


\begin{proposition}\cite[Corollary 5.3.5.4]{Lurie17}
If the small category $\categ D$ admits $\kappa$-small colimits, then a functor $F \in \Psh(\categ D)$ belongs to $\Ind_{\kappa}(\categ D)$ if and only if it preserves $\kappa$-small colimits.
\end{proposition}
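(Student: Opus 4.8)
The statement to prove should be read with the convention that ``$F$ preserves $\kappa$-small colimits'' means that $F \colon \categ D^{\op} \to \categ S$ carries each $\kappa$-small colimit diagram of $\categ D$ to the corresponding limit diagram in $\categ S$. The plan is to combine two ingredients: the preceding proposition, which identifies $\Ind_{\kappa}(\categ D)$ with those presheaves that arise as $\kappa$-filtered colimits of representables, and the fundamental fact that in $\categ S$ the formation of $\kappa$-filtered colimits commutes with $\kappa$-small limits \cite{Lurie17}. Throughout, write $y \colon \categ D \hookrightarrow \Psh(\categ D)$ for the Yoneda embedding.

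For the forward implication (membership in $\Ind_{\kappa}(\categ D)$ forces preservation of $\kappa$-small colimits), I would start from the observation that each representable $y(d) = \Mapp{\categ D}{-}{d}$ sends $\kappa$-small, indeed arbitrary, colimits of $\categ D$ to limits of $\categ S$, which is exactly the universal property of colimits. If $F \in \Ind_{\kappa}(\categ D)$, then by the preceding proposition $F \simeq \colim_{i \in \categ I} y(d_i)$ for some $\kappa$-filtered $\categ I$. Given a $\kappa$-small colimit $c \simeq \colim_{\alpha} c_{\alpha}$ in $\categ D$, I would compute, using that colimits in $\Psh(\categ D)$ are objectwise,
$$
F(c) \simeq \colim_{i} y(d_i)(c) \simeq \colim_{i} \lim_{\alpha} y(d_i)(c_{\alpha}),
$$
and then interchange the $\kappa$-filtered colimit over $i$ with the $\kappa$-small limit over $\alpha$ to obtain $\lim_{\alpha} \colim_{i} y(d_i)(c_{\alpha}) \simeq \lim_{\alpha} F(c_{\alpha})$. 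Hence $F$ preserves $\kappa$-small colimits.

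For the converse, suppose $F$ carries $\kappa$-small colimits of $\categ D$ to limits of $\categ S$. The co-Yoneda (density) formula writes any presheaf as the colimit of representables indexed by its Grothendieck construction, equivalently its category of elements $\categ D_{/F}$, with the projection $\categ D_{/F} \to \categ D$ as the associated diagram. By the preceding proposition it therefore suffices to show that $\categ D_{/F}$ is $\kappa$-filtered. Concretely, a $\kappa$-small diagram in $\categ D_{/F}$ amounts to a $\kappa$-small diagram $(d_i)$ in $\categ D$ together with a compatible family of points $x_i$ of the spaces $F(d_i)$, that is, a point of $\lim_{i} F(d_i)$. Since $\categ D$ admits $\kappa$-small colimits I can form $d \simeq \colim_{i} d_i$, and the hypothesis on $F$ gives $F(d) \simeq \lim_{i} F(d_i)$; the chosen point of the latter yields an element $x \in F(d)$ restricting to the $x_i$, hence a cocone on the original diagram with apex $(d,x) \in \categ D_{/F}$. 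This is precisely the cocone that witnesses $\kappa$-filteredness, so $F \in \Ind_{\kappa}(\categ D)$.

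The main obstacle is to make the last paragraph precise in the $\infty$-categorical setting: ``$\kappa$-filtered'' is not merely a condition on objects and arrows but the requirement that every map from a $\kappa$-small simplicial set into $\categ D_{/F}$ extend over its right cone, and the passage from a ``compatible family of points'' to a ``point of the limit'' must be carried out coherently through the straightening of the right fibration that classifies $F$. The commutation of $\kappa$-filtered colimits with $\kappa$-small limits in $\categ S$, invoked in both directions, is the other essential input and is the place where the regularity of $\kappa$ and the specific structure of $\categ S$ are used.
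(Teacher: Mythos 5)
Your proof is correct, and since the paper offers no argument of its own for this statement (it simply cites \cite[Corollary 5.3.5.4]{Lurie17}), the right comparison is with Lurie's proof, which yours reproduces in essence: the forward direction via writing $F$ as a $\kappa$-filtered colimit of representables and commuting $\kappa$-filtered colimits with $\kappa$-small limits in $\categ S$, and the converse by using the hypothesis $F(\colim_i d_i) \simeq \lim_i F(d_i)$ to extend any $\kappa$-small diagram in the Grothendieck construction over its right cone. You also correctly flag the one point needing real care, namely that lifts of a diagram $K \to \categ D$ against the right fibration $\categ D_{/F} \to \categ D$ are computed by the limit of $F$ over $K^{\op}$, which is exactly what makes the ``compatible family of points equals point of the limit'' step rigorous.
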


\begin{definition}
Let $\categ D$ be a small $\infty$-category that has finite coproducts. Then, let $\Psh_{\Sigma}(\categ D)$ be the full subcategory of $\Psh(\categ D)$ spanned by functors $F$ that preserves finite products.
\end{definition}

\begin{proposition}\cite[Proposition 5.5.8.10]{Lurie17}
The inclusion functor
$\Psh_{\Sigma}(\categ D) \to \Psh(\categ D)$ preserves sifted colimits and
is right adjoint. In particular, $\Psh_{\Sigma}(\categ D)$ is cocomplete.
\end{proposition}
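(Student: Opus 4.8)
The plan is to exhibit $\Psh_{\Sigma}(\categ D)$ as an accessible reflective localization of $\Psh(\categ D)$ cut out by a small set of morphisms, and then to verify the sifted-colimit closure by hand using that colimits of presheaves are computed objectwise. First I would translate the defining condition into a locality condition via the Yoneda embedding $y \colon \categ D \to \Psh(\categ D)$. For a presheaf $F$ and an object $d$ one has $F(d) \simeq \Mapp{\Psh(\categ D)}{y(d)}{F}$, so that for a finite family $(d_i)_{i \in I}$ of objects of $\categ D$ the comparison map
$$
F\left( \coprod_{i \in I} d_i \right) \longrightarrow \prod_{i \in I} F(d_i)
$$
is identified with the map on mapping spaces induced by the canonical morphism
$$
c_{(d_i)} \colon \coprod_{i \in I} y(d_i) \longrightarrow y\left( \coprod_{i \in I} d_i \right)
$$
of $\Psh(\categ D)$, the empty family encoding the normalisation $F(\emptyset) \simeq \ast$. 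Thus $F$ preserves finite products if and only if it is local with respect to every $c_{(d_i)}$, and $\Psh_{\Sigma}(\categ D)$ is exactly the full subcategory $S^{-1}\Psh(\categ D)$ of $S$-local objects, where $S = \{ c_{(d_i)} \}$ ranges over finite families.

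Since $\categ D$ is small, the collection $S$ is essentially small. I would therefore invoke the standard theory of localizations of presentable $\infty$-categories \cite{Lurie17}: the full subcategory of $S$-local objects of a presentable $\infty$-category is a reflective localization, the inclusion admits a left adjoint $L$, and the localization is accessible, so that $\Psh_{\Sigma}(\categ D)$ is again presentable. In particular the inclusion $\Psh_{\Sigma}(\categ D) \to \Psh(\categ D)$ is a right adjoint and $\Psh_{\Sigma}(\categ D)$ is cocomplete, which settles two of the three claims.

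It remains to prove that the inclusion preserves sifted colimits; for a full reflective subcategory it suffices to show that $\Psh_{\Sigma}(\categ D)$ is closed in $\Psh(\categ D)$ under sifted colimits, for then such a colimit is unchanged by the reflector $L$ and hence is also the colimit computed in $\Psh_{\Sigma}(\categ D)$. Let $\categ I$ be a sifted $\infty$-category and $F \simeq \colim_{\alpha \in \categ I} F_\alpha$ a colimit in $\Psh(\categ D)$ with each $F_\alpha \in \Psh_{\Sigma}(\categ D)$. Because colimits of presheaves are computed objectwise, for a finite family $(d_i)_{i \in I}$ one has
$$
F\left( \coprod_{i} d_i \right) \simeq \colim_{\alpha} F_\alpha\left( \coprod_{i} d_i \right) \simeq \colim_{\alpha} \prod_{i} F_\alpha(d_i) .
$$
Here I would use the key property that sifted colimits commute with finite products in $\categ S$, which yields $\colim_{\alpha} \prod_i F_\alpha(d_i) \simeq \prod_i \colim_{\alpha} F_\alpha(d_i) \simeq \prod_i F(d_i)$; hence $F$ preserves finite products and lies in $\Psh_{\Sigma}(\categ D)$.

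The main obstacle is this last commutation: the statement that sifted colimits commute with finite products in $\categ S$ (including, for the empty family, the weak contractibility of any sifted index $\infty$-category) is the genuinely nonformal input and is essentially the defining feature of sifted diagrams; everything else is bookkeeping around the Yoneda lemma and the standard localization machinery. A secondary point to check carefully is that $S$ may be taken small even though $\categ D$ is only essentially small, so that the resulting localization is accessible rather than merely reflective.
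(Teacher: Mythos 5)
Your proof is correct. The paper itself offers no argument for this statement---it is quoted directly from \cite[Proposition 5.5.8.10]{Lurie17}---and what you have written is essentially Lurie's own proof: identify $\Psh_{\Sigma}(\categ D)$ with the $S$-local objects for the small set of coproduct-comparison maps $\coprod_i y(d_i) \to y\bigl(\coprod_i d_i\bigr)$, invoke the theory of accessible localizations (the same machinery the paper packages as Lemma \ref{lemmapresentatio}) to get the left adjoint, and then use objectwise computation of colimits together with the fact that sifted colimits commute with finite products in $\categ S$ to see that the subcategory is closed under sifted colimits. You have also correctly isolated that last commutation as the genuinely nonformal input.
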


\begin{proposition}\cite[Lemma 5.5.8.14]{Lurie17}
A functor $F \in \Psh(\categ D)$ belongs to 
$\Psh_{\Sigma}(\categ D)$ if and only if it obtained as the geometric realisation in $\Psh(\categ D)$ of
a simplicial object $G$ whose simplicies belong to $\Ind_{\omega}(\categ D)\subseteq \Psh(\categ D)$.
\end{proposition}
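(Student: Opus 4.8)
The plan is to identify $\Psh_{\Sigma}(\categ D)$ with the closure of $\Ind_{\omega}(\categ D)$ under geometric realisations, exploiting that $\Psh_{\Sigma}(\categ D)$ is generated under sifted colimits by the representable presheaves. Writing $|G_{\bullet}|$ for the geometric realisation (the colimit of a simplicial object $G_{\bullet}$), the organising tool throughout is the preceding proposition \cite[Proposition 5.5.8.10]{Lurie17}: since the inclusion $\Psh_{\Sigma}(\categ D) \hookrightarrow \Psh(\categ D)$ preserves sifted colimits and $\Psh_{\Sigma}(\categ D)$ is cocomplete, filtered colimits and geometric realisations may be computed indifferently in either category.

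First I would dispose of the sufficiency (the ``if'' direction). A representable presheaf $y(d) = \Map_{\categ D}(-,d)$ sends a finite coproduct $c_1 \sqcup c_2$ to $\Map_{\categ D}(c_1,d) \times \Map_{\categ D}(c_2,d)$ and the initial object to $\ast$, by the universal property of coproducts in $\categ D$; hence every representable lies in $\Psh_{\Sigma}(\categ D)$. As $\Ind_{\omega}(\categ D)$ consists of filtered colimits of representables and filtered colimits are sifted, the preservation statement gives $\Ind_{\omega}(\categ D) \subseteq \Psh_{\Sigma}(\categ D)$. Since $\Delta^{\op}$ is sifted \cite[Lemma 5.5.8.4]{Lurie17}, a presheaf $F$ that is the realisation in $\Psh(\categ D)$ of a simplicial object with terms in $\Ind_{\omega}(\categ D)$ coincides with its realisation in $\Psh_{\Sigma}(\categ D)$, and therefore preserves finite products.

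For the necessity (the ``only if'' direction), let $\categ{E}'$ be the full subcategory of $\Psh_{\Sigma}(\categ D)$ whose objects are the realisations $|G_{\bullet}|$ of simplicial objects $G_{\bullet}$ valued in $\Ind_{\omega}(\categ D)$; the previous paragraph shows $\categ{E}'$ is well defined, and it contains $\categ D$ via constant simplicial objects. Recalling that $\Psh_{\Sigma}(\categ D)$ is generated under sifted colimits by the representables \cite[Proposition 5.5.8.15]{Lurie17}, and that sifted colimits are generated by filtered colimits together with geometric realisations \cite[Corollary 5.5.8.17]{Lurie17}, it suffices to prove that $\categ{E}'$ is stable under these two operations. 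For a realisation $F = |H_{\bullet}|$ with each $H_n = |G_{n,\bullet}| \in \categ{E}'$, I would form the bisimplicial object $G_{\bullet,\bullet}$ and use cofinality of the diagonal $\Delta^{\op} \to \Delta^{\op} \times \Delta^{\op}$ to rewrite $F$ as the realisation of $[n] \mapsto G_{n,n}$, whose terms still lie in $\Ind_{\omega}(\categ D)$. For a filtered colimit $F = \colim_i |G_{i,\bullet}|$ with each $G_{i,\bullet}$ valued in $\Ind_{\omega}(\categ D)$, I would commute the two colimits -- legitimate because both are computed objectwise in $\Psh(\categ D)$ -- to obtain $F = |[n] \mapsto \colim_i G_{i,n}|$, and invoke stability of $\Ind_{\omega}(\categ D)$ under filtered colimits in $\Psh(\categ D)$ to land back in $\categ{E}'$. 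Thus $\categ{E}'$ is sifted-cocomplete and contains the representables, forcing $\categ{E}' = \Psh_{\Sigma}(\categ D)$.

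The hard part will be the closure of $\categ{E}'$ under geometric realisations: a realisation of objects that are themselves realisations is a priori only an iterated realisation, not a single one over $\Ind_{\omega}(\categ D)$. The diagonal/cofinality reduction is exactly what collapses the bisimplicial datum back to a simplicial one and is the decisive step; by contrast the filtered-colimit closure is comparatively routine once one knows $\Ind_{\omega}(\categ D)$ is stable under filtered colimits. Assembling the two closures, together with the generation of $\Psh_{\Sigma}(\categ D)$ under sifted colimits, yields the claimed equivalence.
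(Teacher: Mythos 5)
Two preliminary remarks: the paper itself contains no proof of this statement (it is quoted from Lurie, HTT Lemma 5.5.8.14, and cited as such), so your proposal can only be measured against the argument in that source and against its own internal logic. Your ``if'' direction is correct and unproblematic: representables preserve finite products, hence $\Ind_{\omega}(\categ D)\subseteq\Psh_{\Sigma}(\categ D)$ by stability under filtered colimits, and stability of $\Psh_{\Sigma}(\categ D)$ under sifted colimits handles the realisation.

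The ``only if'' direction has a genuine gap, and it sits exactly where you yourself locate ``the hard part''. Membership in your subcategory $\categ E'$ is a pure existence statement: each $H_n$ admits \emph{some} presentation $H_n\simeq |G_{n,\bullet}|$. To ``form the bisimplicial object $G_{\bullet,\bullet}$'' you must lift the simplicial object $H_\bullet$ along the realisation functor $\Fun{\Delta^{\op}}{\Ind_{\omega}(\categ D)}\to\categ E'$, i.e.\ choose the presentations compatibly with all face and degeneracy maps and all their higher coherences. That functor is merely essentially surjective, and essentially surjective functors of $\infty$-categories do not allow lifting of diagrams; nothing in the proposal produces such a lift, and producing one (a functorial resolution) is essentially the content of the lemma being proved. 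The same objection applies to your filtered-colimit step, which presupposes a functor $i\mapsto G_{i,\bullet}$ rather than objectwise presentations. A second defect is the framing: Corollary 5.5.8.17 of HTT is a statement about \emph{functors} defined on cocomplete $\infty$-categories; it does not say that a full subcategory closed under filtered colimits and geometric realisations is closed under sifted colimits, which is the closure principle your reduction requires. Moreover, within HTT that corollary is deduced from Lemmas 5.5.8.13--5.5.8.15, so invoking it here is circular relative to the source.

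What a correct proof actually hinges on --- and what your ``only if'' argument never touches --- is the finite-coproduct structure of $\categ D$. For $X\in\Psh_{\Sigma}(\categ D)$, the canonical colimit $X\simeq\colim_{\categ D_{/X}}y$ of representables is also a colimit in $\Psh_{\Sigma}(\categ D)$, and the simplicial replacement of this colimit, computed \emph{inside} $\Psh_{\Sigma}(\categ D)$, exhibits $X$ as the realisation of an explicit, canonical simplicial object whose terms are coproducts in $\Psh_{\Sigma}(\categ D)$ of representables. Here finite coproducts enter decisively: in $\Psh_{\Sigma}(\categ D)$, unlike in $\Psh(\categ D)$, a finite coproduct of representables is again representable (since $\Mapp{\Psh_{\Sigma}(\categ D)}{y(d_1)\sqcup y(d_2)}{F}\simeq F(d_1)\times F(d_2)\simeq F(d_1\sqcup d_2)$ for product-preserving $F$), so an arbitrary small coproduct of representables is a filtered colimit of representables and lies in $\Ind_{\omega}(\categ D)$. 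Since the inclusion $\Psh_{\Sigma}(\categ D)\subseteq\Psh(\categ D)$ preserves sifted colimits, the realisation may then be computed in $\Psh(\categ D)$, which is the statement. Coproducts are precisely the colimits on which $\Psh(\categ D)$ and $\Psh_{\Sigma}(\categ D)$ disagree; your reduction of sifted colimits to filtered colimits and realisations hides them, and that, together with the rectification issue, is where the proposal breaks.
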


\subsection{Presentable infinity-category}

\begin{definition}
Let $\kappa$ be an regular cardinal. An $\infty$-category $\categ C$ is $\kappa$-presentable if there exists a small $\infty$-category $\categ D$ and an adjunction
    $$
    \begin{tikzcd}
    \Psh(\categ D) \ar[rr, shift left]
    && \categ C \ar[ll, shift left]
    \end{tikzcd}
    $$
    whose right adjoint is full faithful and preserves $\kappa$-filtered colimits.
\end{definition}

\begin{remark}
The definition of a $\kappa$-presentable category is different from that \cite{Lurie17}. But, the fact that both definitions describe the same notion follows from \cite[Corollary 5.5.7.3]{Lurie17}.
\end{remark}

\begin{lemma}\cite[Proposition 5.5.4.15]{Lurie17}\label{lemmapresentatio}
Let $\kappa$ be a regular cardinal. Let $\categ D$ be a small category and let $S$ be a small set of morphisms of $\Psh(\categ D)$ between $\kappa$-small objects. Then the inclusion of the full subcategory of $\Psh(\categ D)$ spanned by $S$-local objects into $\Psh(\categ D)$ preserves $\kappa$-filtered colimits and is right adjoint. Hence, this full subcategory is $\kappa$-presentable.
\end{lemma}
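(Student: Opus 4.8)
The plan is to exhibit the subcategory $\categ C_S \subseteq \Psh(\categ D)$ of $S$-local objects as a reflective localization whose inclusion is $\kappa$-accessible; the asserted $\kappa$-presentability will then be immediate from the definition. Recall that an object $X$ is \emph{$S$-local} when, for every $f \colon A \to B$ in $S$, the restriction map $\Map(B,X) \to \Map(A,X)$ is an equivalence; equivalently, the canonical map $X \to 1$ to the terminal object is right orthogonal to every $f \in S$. I must produce (a) a left adjoint $L$ to the inclusion $\iota \colon \categ C_S \hookrightarrow \Psh(\categ D)$, and (b) the fact that $\iota$ preserves $\kappa$-filtered colimits. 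Granting both, the adjunction $L \dashv \iota$, with $\iota$ fully faithful (being the inclusion of a full subcategory) and $\kappa$-accessible, is precisely the data required by the definition of a $\kappa$-presentable $\infty$-category, with the same indexing category $\categ D$.

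For (a) I would run the $\infty$-categorical small object argument. Since $\Psh(\categ D)$ is cocomplete, I can form the weakly saturated class of morphisms generated by $S$, that is the closure of $S$ under pushouts, transfinite composition and retracts. Given $X$, I factor the map $X \to 1$ as $X \to LX \to 1$ by transfinite iteration: at a successor stage I attach to $X_\alpha$ a copy of $B_f$ along each map $A_f \to X_\alpha$, for every $f \in S$ (a single pushout, copowered over the relevant mapping spaces), thereby extending every map out of a source along $f$; at limit stages I take colimits. The crucial quantitative input is the hypothesis that the sources and targets of the maps in $S$ are $\kappa$-small: by regularity of $\kappa$, any map out of such an object into the $\kappa$-filtered colimit $LX = \colim_{\alpha < \kappa} X_\alpha$ factors through some stage $X_\alpha$, so that after $\kappa$ steps every lifting problem against $S$ is already solved. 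Hence $LX \to 1$ is right orthogonal to $S$, i.e. $LX$ is $S$-local, while $X \to LX$ lies in the weakly saturated class generated by $S$. Because the class of $S$-local equivalences (maps inverted by mapping into every $S$-local object) is itself weakly saturated and contains $S$, it contains this generated class; thus $X \to LX$ is an $S$-local equivalence, and the resulting universal property provides the adjunction $L \dashv \iota$.

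For (b) I would first observe that a $\kappa$-filtered colimit computed in $\Psh(\categ D)$ of $S$-local objects is again $S$-local: for each $f \colon A \to B$ in $S$ the functors $\Map(A,-)$ and $\Map(B,-)$ preserve $\kappa$-filtered colimits, since $A$ and $B$ are $\kappa$-small, so an equivalence $\Map(B,X_i) \to \Map(A,X_i)$ at each stage passes to the colimit. Consequently, for a $\kappa$-filtered diagram in $\categ C_S$ its colimit taken in $\Psh(\categ D)$ already lands in $\categ C_S$; in a reflective subcategory such a colimit is computed by applying $L$ to the ambient colimit, and since the latter is already local, $L$ fixes it. Therefore $\iota$ preserves $\kappa$-filtered colimits, completing (b) and with it the proof.

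The main obstacle is step (a): making the small object argument precise in the $\infty$-categorical setting — organising the weakly saturated class generated by $S$, verifying that the transfinite construction converges at stage $\kappa$ (this is exactly where the $\kappa$-smallness of the sources and targets together with the regularity of $\kappa$ are used), and checking that the resulting left factor $X \to LX$ is genuinely an $S$-local equivalence rather than merely a map inducing a surjection on connected components of mapping spaces. Once the localization is in hand, the remaining verifications — closure of local objects under $\kappa$-filtered colimits and reading off $\kappa$-presentability from the definition — are routine.
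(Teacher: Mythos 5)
Your part (b) --- closure of $S$-local objects under $\kappa$-filtered colimits, proved by comparing $\varinjlim \Map(Y,D)$ with $\Map(Y,\varinjlim D)$ and $\varinjlim \Map(X,D)$ with $\Map(X,\varinjlim D)$, using the $\kappa$-smallness of the sources and targets of maps in $S$ --- is exactly the paper's argument, and it is in fact the \emph{only} thing the paper proves: for the existence of the left adjoint the paper simply invokes Lurie's Proposition 5.5.4.15 (the lemma is even stated with that citation attached), noting that it rests on the small object argument.

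Where you diverge is part (a): you attempt to reconstruct that small object argument rather than cite it, and your sketch has a genuine gap, which you flag as ``the main obstacle'' but do not close. Attaching a cell $B_f$ along every map $A_f \to X_\alpha$ for $f \in S$ only forces the restriction maps $\Map(B_f, LX) \to \Map(A_f, LX)$ to be surjective on $\pi_0$, i.e.\ it makes $LX$ merely $S$-\emph{injective}; $S$-locality requires these maps to be equivalences, and no amount of iterating the cell attachment as you describe it will achieve that. The standard repair is to run the construction not against $S$ but against the enlarged set of pushout-products of maps in $S$ with the boundary inclusions $\partial\Delta^n \hookrightarrow \Delta^n$ for $n \geq 0$: injectivity against this larger set is equivalent to locality against $S$, its members still have $\kappa$-small sources and targets, so your convergence-at-stage-$\kappa$ argument applies verbatim --- and this is in effect what Lurie's proof does. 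As written, your $LX$ need not be $S$-local, so the universal property and the adjunction $L \dashv \iota$ do not yet follow. If instead you cite Lurie for the reflective localization, as the paper does, the remainder of your proposal is correct and coincides with the paper's proof.
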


\begin{proof}
From \cite[Proposition 5.5.4.15]{Lurie17} that uses the small object argument, we get the left adjoint (together with some accessibility condition on the right adjoint inclusion). It suffices to show that the full subcategory of $S$-local objects is stable through $\kappa$-filtered colimits (which follows from the same arguments that those used to prove the accessibility condition). For any $\kappa$-filtered diagram $D : K \to \Psh(\categ D)$ which takes values in the full subcategory of $S$-local objects and for any map $s : X \to Y$ in $S$, let us consider the following commutative square
$$
\begin{tikzcd}
\varinjlim \Mapp{\Psh(\categ D)}{Y}{D}
\ar[r] \ar[d]
& \varinjlim \Mapp{\Psh(\categ D)}{X}{D}
\ar[d]
\\
\Mapp{\Psh(\categ D)}{Y}{\varinjlim D}
\ar[r]
& \Mapp{\Psh(\categ D)}{X}{\varinjlim D}.
\end{tikzcd}
$$
Since $D$ takes values in $S$-local objects, the top horizontal arrow is an equivalence. Since $X$ and $Y$ are $\kappa$-small, the vertical arrows are equivalences. Hence the bottom horizontal map is also an equivalence. Thus the colimit of $D$ is $S$-local. This shows that the subcategory of $S$-local objects is stable through $\kappa$-filtered colimits and so the inclusion functor preserves these $\kappa$-filtered colimits.
\end{proof}

\begin{example}
Let $\categ D$ be a small $\infty$-category with finite coproducts. Then $\Psh_\Sigma(\categ D)$ is $\omega$-presentable and the inclusion functor
$$
\Psh_\Sigma(\categ D) \to \Psh(\categ D)
$$
is right adjoint and preserves filtered colimits.
\end{example}

\begin{theorem}\cite[Theorem 5.5.1.1, Corollary 5.5.2.4]{Lurie17}, \cite{Simpson}
A presentable $\infty$-category is complete and cocomplete and the full subcategory of its $\lambda$-small objects is small for any small cardinal $\lambda$.
\end{theorem}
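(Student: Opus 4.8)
The plan is to read off all three assertions from the definition of presentability. By hypothesis there is a small $\infty$-category $\categ D$, a regular cardinal $\kappa$, and an adjunction $L : \Psh(\categ D) \rightleftarrows \categ C : R$ whose right adjoint $R$ is fully faithful and preserves $\kappa$-filtered colimits. Thus $\categ C$ is equivalent to the reflective localization of $\Psh(\categ D)$ at the class $S$ of morphisms inverted by $L$, its essential image is the full subcategory of $S$-local objects, and this inclusion is stable under $\kappa$-filtered colimits. Since we know that $\Psh(\categ D)$ is complete and cocomplete with limits and colimits computed objectwise, all the work reduces to transporting these properties across the localization and to a cardinality count.

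For cocompleteness I would run the standard reflective-subcategory argument: given a diagram $F : \categ I \to \categ C$, I would form $X = \colim_i R F(i)$ in $\Psh(\categ D)$ and show that $L X$ is the colimit of $F$ in $\categ C$, via the chain $\Mapp{\categ C}{LX}{c} \simeq \Mapp{\Psh(\categ D)}{X}{Rc} \simeq \lim_i \Mapp{\Psh(\categ D)}{RF(i)}{Rc} \simeq \lim_i \Mapp{\categ C}{F(i)}{c}$, where the last equivalence uses that $R$ is fully faithful. For completeness I would use that $R$, being a right adjoint, preserves limits, together with the fact that the full subcategory of $S$-local objects is stable under all limits that exist in $\Psh(\categ D)$ (being $S$-local is a condition on mapping spaces that limits preserve); hence limits in $\categ C$ exist and are simply computed in $\Psh(\categ D)$. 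Both points are formal consequences of the localization picture and of the completeness and cocompleteness of $\Psh(\categ D)$.

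The essential smallness of the $\lambda$-small objects is the substantive part, and I would first settle the case $\lambda = \kappa$. In $\Psh(\categ D)$ the $\kappa$-small objects are, up to retracts, the $\kappa$-small colimits of representables, so they form an essentially small family because $\categ D$ is small. Next I would verify that $L$ carries $\kappa$-small objects of $\Psh(\categ D)$ to $\kappa$-small objects of $\categ C$: for $X$ $\kappa$-small and a $\kappa$-filtered diagram $Y : \categ J \to \categ C$, one has $\Mapp{\categ C}{LX}{\colim_j Y_j} \simeq \Mapp{\Psh(\categ D)}{X}{R\,\colim_j Y_j} \simeq \Mapp{\Psh(\categ D)}{X}{\colim_j R Y_j} \simeq \colim_j \Mapp{\Psh(\categ D)}{X}{RY_j} \simeq \colim_j \Mapp{\categ C}{LX}{Y_j}$, the crucial step being that $R$ preserves $\kappa$-filtered colimits. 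Since every $c \in \categ C$ satisfies $c \simeq L R c$ and $R c$ is a $\kappa$-filtered colimit $\colim_i X_i$ of $\kappa$-small objects of $\Psh(\categ D)$ (as $\Psh(\categ D)$ is $\kappa$-accessible), the objects $L X_i$ generate $\categ C$ under $\kappa$-filtered colimits, so a $\kappa$-small object $c \simeq \colim_i L X_i$ must have $\mathrm{id}_c$ factor through some $L X_i$, exhibiting $c$ as a retract of $L X_i$. As the idempotent completion of an essentially small $\infty$-category is essentially small, the $\kappa$-small objects of $\categ C$ form a small subcategory.

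Finally I would extend this to an arbitrary small cardinal $\lambda$. For $\lambda \leq \kappa$ every $\lambda$-small object is $\kappa$-small, so the $\lambda$-small objects are a full subcategory of an essentially small one and hence essentially small. For $\lambda > \kappa$ I would appeal to the structure theory of accessible $\infty$-categories: the $\lambda$-small objects are produced from the essentially small category of $\kappa$-small objects by closing under retracts and $\lambda$-small colimits, an operation preserving essential smallness. I expect this last cardinality bookkeeping—making precise that closing an essentially small family under $\lambda$-small colimits and retracts remains essentially small, uniformly in $\lambda$—to be the main obstacle, whereas the completeness and cocompleteness are, by contrast, routine reflective-localization formalities.
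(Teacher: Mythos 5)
The paper gives no proof of this statement at all---it is imported verbatim from \cite[Theorem 5.5.1.1, Corollary 5.5.2.4]{Lurie17}---and your sketch is essentially the standard argument of that reference: reflective-localization formalities for completeness and cocompleteness, and the retract-of-$L(X_i)$ argument for essential smallness of the $\kappa$-small objects, all of which you carry out correctly from the paper's definition of presentability. The one imprecision is in your final step: for $\lambda > \kappa$ it is \emph{not} true for arbitrary $\lambda$ that the $\lambda$-small objects are obtained from the $\kappa$-small ones by closing under retracts and $\lambda$-small colimits; this description requires $\kappa$ to be ``sharply less than'' $\lambda$ in the sense of \cite[Proposition 5.4.2.11]{Lurie17}, and not every pair of regular cardinals satisfies it. You can bypass that bookkeeping entirely with the argument you already gave for $\kappa$: for any regular $\lambda \geq \kappa$, every $\lambda$-filtered diagram is in particular $\kappa$-filtered, so $R$ preserves $\lambda$-filtered colimits; since $Rc$ is a $\lambda$-filtered colimit of $\lambda$-small presheaves (presheaf categories are $\lambda$-accessible for every regular $\lambda$), your retract argument applies verbatim and exhibits every $\lambda$-small object of $\categ C$ as a retract of $L(X)$ with $X$ a $\lambda$-small presheaf, an essentially small family; an arbitrary small cardinal $\lambda$ is then handled by majorizing it with a regular one, since $\lambda$-smallness only becomes a weaker condition as the cardinal grows.
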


Let us end this subsection by dealing with stable $\omega$-presentable $\infty$-categories.
Let us consider a stable cocomplete $\infty$-category $\categ C$, and a small  full subcategory $\categ D$ of $\categ C$ spanned by $\omega$-small objects. This yields
an adjunction
$$
\begin{tikzcd}
\Psh(\categ D)
\ar[rr, shift left]
&&
\categ C . \ar[ll, shift left]
\end{tikzcd}
$$
whose right adjoint preserves filtered colimits.

Let us consider a reflective full subcategory $\categ E$ of $\Psh(\categ D)$ that is stable, that contains the image of $\categ C$ and that is stable through limits and filtered colimits. We have an adjunction
$$
\begin{tikzcd}
\categ E \ar[rr, shift left, "L"]
&& \categ C \ar[ll, shift left,"R"]
\end{tikzcd}
$$
which factors the previous adjunction relating $\Psh(\categ D)$ and $\categ C$.

\begin{lemma}\label{lemmacoreflect}
The adjunction $L \dashv R$ is a coreflective localisation in the sense that the functor $L$ is fully faithful.
\end{lemma}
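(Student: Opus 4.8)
The plan is to name the two adjunctions and reduce the statement to a claim about a single unit. Write $F : \Psh(\categ D) \to \categ C$ and $G : \categ C \to \Psh(\categ D)$ for the left and right adjoints of the presheaf adjunction, so that $G$ is the restricted Yoneda functor, which preserves filtered colimits, and write $\iota : \categ E \hookrightarrow \Psh(\categ D)$ for the reflective inclusion. Since $\categ E$ contains the image of $G$, this functor corestricts to $R : \categ C \to \categ E$ with $\iota R \simeq G$, and one sets $L := F\iota$; the standard identification of composites of adjoints (together with $\iota$ being fully faithful) gives $L \dashv R$. As $L$ is a left adjoint, it is fully faithful if and only if the unit $\eta : \id_{\categ E} \Rightarrow RL$ is an equivalence, so the whole statement reduces to proving that $\eta_e$ is an equivalence for every $e \in \categ E$.

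To prove this I would show that the full subcategory $\categ G \subseteq \categ E$ spanned by those $e$ for which $\eta_e$ is an equivalence is all of $\categ E$. The strategy rests on three ingredients: (i) $\categ G$ contains every representable; (ii) $\categ G$ is stable under all colimits computed in $\categ E$; (iii) $\categ E$ is generated under colimits by the representables. Granting these, $\categ G = \categ E$ and we are done. For (iii): every presheaf is a colimit of representables in $\Psh(\categ D)$, and the reflection $\Psh(\categ D) \to \categ E$ is a left adjoint, hence preserves colimits and fixes the representables (each $h_d = \Mapp{\categ D}{-}{d}$ lies in $\categ E$, being $G(d)$ since $\categ D$ is a full subcategory of $\categ C$); thus every object of $\categ E$ is a colimit of representables inside $\categ E$. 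For (i): for $d \in \categ D$ one computes $L(h_d) = F(h_d) \simeq d$ and $\iota R L(h_d) \simeq G(d) \simeq h_d$, and the unit $\eta_{h_d}$ is an equivalence by the Yoneda lemma.

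The crux is ingredient (ii), and this is precisely where the hypotheses on $\categ E$ enter. Since $L$ is a left adjoint it preserves all colimits, so it suffices to show $R$ preserves all colimits; then $RL$ does, and the class of objects on which the natural transformation $\eta$ is an equivalence is closed under colimits. I would check this in two steps. First, $R$ preserves filtered colimits: these are computed in $\categ E$ as in $\Psh(\categ D)$ because $\categ E$ is stable under filtered colimits, and $G = \iota R$ preserves them by hypothesis, so the conservativity of $\iota$ transfers the property to $R$. Second, $R$ preserves finite colimits: being a right adjoint it preserves finite limits, and since both $\categ C$ and $\categ E$ are stable, a finite-limit-preserving functor between them is exact, hence also preserves finite colimits. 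A functor preserving filtered and finite colimits preserves all colimits, which gives (ii). The main obstacle is thus the finite-colimit step: it genuinely uses the stability of $\categ E$ (the inclusion into $\Psh(\categ D)$ does \emph{not} preserve finite colimits, so one cannot argue inside $\Psh(\categ D)$), together with the stability of $\categ C$ to pass from finite limits to finite colimits.
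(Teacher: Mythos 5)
Your proof is correct and takes essentially the same route as the paper's: reduce to showing the unit of $L \dashv R$ is an equivalence, verify it on representables via Yoneda, and propagate to all of $\categ E$ using generation under colimits together with the fact that $R$ preserves all colimits (finite ones by stability of $\categ C$ and $\categ E$, filtered ones by hypothesis). The paper's proof is simply a terser version of your argument, with your ingredients (i)--(iii) appearing there as single sentences.
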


\begin{proof}
We can notice that $R$ preserves finite limits. Hence it preserves finite colimits (by stability). Since it preserves filtered colimits, then it preserves all colimits by \cite[Corollary 4.2.3.12]{Lurie17}.
Any element $d \in \Psh(\categ D)$ in the image of the Yoneda embedding $D \to \Psh(\categ D)$ belongs to the image of $\categ C$ and thus belongs to $\categ E$. For any such element, the map $d \to RL(d)$ is an equivalence by Yoneda's lemma (since $\categ D$ is a full subcategory of $\categ C$). Thus, since $R$ and $L$ preserves colimits and since $\categ E$ is generated by colimits of diagrams made up of such elements $d$, then for any $e \in \categ E$, the map $e \to RL(e)$ is an equivalence.
\end{proof}

\begin{proposition}\label{propositionstableequivalence}
The adjunction $L \dashv R$ relating $\categ C$ to $\categ E$ is an adjoint equivalence if and only if the right adjoint functor $R$ is conservative.
\end{proposition}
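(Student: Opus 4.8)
The plan is to reduce the statement to a formal property of adjunctions, leveraging Lemma \ref{lemmacoreflect}, which already tells us that $L$ is fully faithful. Fully faithfulness of $L$ is equivalent to the unit $\eta : \id_{\categ E} \to RL$ of the adjunction $L \dashv R$ being a natural equivalence. Since an adjoint equivalence is by definition an adjunction in which both the unit and the counit $\epsilon : LR \to \id_{\categ C}$ are equivalences, and the unit is already invertible, the whole question collapses to controlling the counit $\epsilon$.

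The forward implication is immediate: if $L \dashv R$ is an adjoint equivalence, then $R$ is an equivalence of $\infty$-categories, hence in particular conservative. For the converse, I would invoke the triangle identity $(R\epsilon) \circ (\eta R) = \id_R$. Because $\eta$ is a natural equivalence, so is $\eta R$; as the composite equals $\id_R$, it follows that $R\epsilon$ is a natural equivalence. Concretely, for every object $c \in \categ C$ the morphism $R(\epsilon_c) : RLR(c) \to R(c)$ is an equivalence in $\categ E$. Assuming now that $R$ is conservative, each $\epsilon_c$ must itself be an equivalence in $\categ C$, so $\epsilon$ is a natural equivalence. Together with the invertibility of $\eta$ coming from Lemma \ref{lemmacoreflect}, this exhibits $L \dashv R$ as an adjoint equivalence.

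The argument is essentially formal once fully faithfulness of $L$ is in hand, so I do not anticipate a substantial obstacle. The only points demanding care are the bookkeeping of the triangle identity that deduces the invertibility of $R\epsilon$ from that of $\eta$, and the recognition that conservativity of $R$ is exactly the additional hypothesis needed to promote the componentwise equivalences $R(\epsilon_c)$ to equivalences $\epsilon_c$ in $\categ C$. The role of the stability and filtered-colimit hypotheses on $\categ E$ is confined to Lemma \ref{lemmacoreflect}; they do not intervene again in this step.
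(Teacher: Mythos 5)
Your proof is correct and follows exactly the route the paper intends: the paper's own proof simply says the statement is ``a direct consequence of Lemma \ref{lemmacoreflect},'' and your argument (unit invertible by fully faithfulness of $L$, then the triangle identity $(R\epsilon)\circ(\eta R)\simeq \id_R$ plus conservativity of $R$ to invert the counit) is precisely the standard formal argument being invoked there. You have merely made explicit what the paper leaves implicit.
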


\begin{proof}
It is a direct consequence of Lemma \ref{lemmacoreflect}.
\end{proof}

\subsection{Strongly monadic functor}

\begin{definition}
A functor $F :\categ A \to \categ C$ is strongly monadic if $\categ A$ has all geometric realisations and if $F$ is right adjoint and conservative and preserves geometric realisations. Moreover, an adjunction is strongly monadic of its right adjoint is strongly monadic.
\end{definition}

\begin{notation}
A strongly monadic functor from $\categ A$ to $\categ C$ will be denoted $U^M$, its left adjoint will be denoted $T_M$ and $M$ will be the related monad $M = U^M \circ T_M$.
\end{notation}

\begin{lemma}\cite{RiehlVerity16}\label{lemmamonadic}
Let $U^M : \categ A \to \categ C$ be a (strongly) monadic functor, and let $Q = T_M U^M$. Then one has
one gets a functor
\begin{align*}
    (\Delta^\op)^{\triangleright} = \Delta_+^\op &\to \Fun{\categ A}{\categ A}
    \\
    n &\mapsto Q^{n+1} ,
\end{align*}
so that for any object $A \in \categ A$, the augmented simplicial object $n \mapsto Q^{n+1} A$ is the colimit of the underlying simplicial diagram.
\end{lemma}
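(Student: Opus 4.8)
The plan is to exhibit $n \mapsto Q^{n+1}$ as the canonical bar resolution attached to the comonad $Q = T_M U^M$ on $\categ A$, and then to detect its colimit after transporting everything to $\categ C$ along the conservative, geometric-realisation-preserving functor $U^M$. First I would construct the functor itself. The adjunction $T_M \dashv U^M$ endows $Q = T_M U^M$ with a comonad structure whose counit $\epsilon \colon Q \to \id_{\categ A}$ is the counit of the adjunction and whose comultiplication $\delta = T_M \eta U^M \colon Q \to Q^2$ comes from the unit $\eta \colon \id_{\categ C} \to U^M T_M$. The homotopy-coherent adjunction machinery of \cite{RiehlVerity16} packages exactly this data into a functor $\Delta_+^\op \to \Fun{\categ A}{\categ A}$ sending $[n]$ to $Q^{n+1}$ (with $[-1] \mapsto \id_{\categ A}$), the faces being iterated applications of $\epsilon$, the degeneracies iterated applications of $\delta$, and the augmentation $\epsilon$ itself; evaluating at an object $A$ yields the augmented simplicial object $n \mapsto Q^{n+1}A$ over $A$. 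I would take this coherent construction as given, since it is the substance of the cited result.

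Next I would reduce the colimit claim to a statement in $\categ C$. Because $U^M$ is strongly monadic, $\categ A$ has geometric realisations, so the colimit $|Q^{\bullet+1}A|$ exists and the augmentation supplies a canonical comparison map $c \colon |Q^{\bullet+1}A| \to A$. Since $U^M$ is conservative, it suffices to show that $U^M c$ is an equivalence; and since $U^M$ preserves geometric realisations, one has $U^M|Q^{\bullet+1}A| \simeq |U^M Q^{\bullet+1}A|$, so the task becomes showing that the augmented simplicial object $U^M Q^{\bullet+1}A \to U^M A$ is a colimit diagram in $\categ C$.

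Finally I would invoke the extra-degeneracy argument. Writing $M = U^M T_M$, one has $U^M Q^{n+1}A \simeq M^{n+1} U^M A$, and the augmentation $U^M \epsilon_A \colon M\,U^M A \to U^M A$ is the $M$-algebra structure map on $U^M A$. The unit $\eta$ of the adjunction, inserted in the outermost slot, furnishes extra degeneracies $\eta M^{n+1}U^M A \colon M^{n+1}U^M A \to M^{n+2}U^M A$ (including $\eta_{U^M A} \colon U^M A \to M\,U^M A$ at the augmentation level) satisfying the simplicial identities that exhibit $U^M Q^{\bullet+1}A \to U^M A$ as a split augmented simplicial object. Split augmented simplicial objects are colimit diagrams that are preserved by every functor \cite[Lemma 6.1.3.16]{Lurie17}, so $U^M A \simeq |U^M Q^{\bullet+1}A|$, whence $U^M c$ is an equivalence and $c$ is an equivalence by conservativity.

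The main obstacle is genuinely the first step: in the $\infty$-categorical setting, assembling the faces, degeneracies and augmentation into a single functor out of $\Delta_+^\op$ together with all the higher coherences, and likewise making the extra degeneracies coherent rather than merely defined on $1$-cells, is the delicate point, and it is precisely what the formalism of \cite{RiehlVerity16} is designed to handle. Once the coherent resolution and its coherent splitting after $U^M$ are in hand, the rest of the argument, namely conservativity together with preservation of geometric realisations and the absoluteness of split colimits, is formal.
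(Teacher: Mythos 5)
Your proposal is correct and follows essentially the same route as the paper: construct the augmented resolution from the Riehl--Verity homotopy-coherent adjunction, observe that its image under $U^M$ is a (coherently) split augmented simplicial object, and conclude by monadicity, i.e.\ conservativity plus the fact that split augmented simplicial objects are absolute colimit diagrams. The only cosmetic difference is that the paper produces the coherent splitting structurally, via composition with $r$ inducing $\Delta_+^\op \simeq \categ{Adj}(1,1) \to \categ{Adj}(1,0) \simeq \Delta_\infty^\op$, which is exactly the coherence issue you flag and correctly defer to the cited formalism.
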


\begin{proof}
Let $\categ{Adj}$ be the 2-category that encodes adjunctions. It has two objects $0$ and $1$ and is generated by morphisms $l : 0 \to 1$ and $r : 1 \to 0$ and 2-morphisms $\eta : \id_0 \to rl$ and $\epsilon : lr \to \id_1$ that satisfy the same relations than those defining adjunctions. We know from \cite{RiehlVerity16}, that the adjunction $T_M \dashv U^M$ induces a functor of categories enriched in $\infty$-categories
$$
\categ{Adj} \to \categ{QuasiCats} .
$$
that maps $l$ to the left adjoint and $r$ to the right adjoint. This, gives us the functor
\begin{align*}
\Delta_+^\op \simeq \categ{Adj}(1,1) &\to \Fun{\categ A}{\categ A} ,
\end{align*}
that sends $n \in \Delta_+^\op$ to $Q^{n+1}$.
Besides, composition with $r$ in the 2-category $\categ{Adj}$ recovers the canonical inclusion
$$
\Delta_+^\op \simeq \categ{Adj}(1,1) \xrightarrow{ r \circ - } \categ{Adj}(1,0) \simeq \Delta_\infty^\op .
$$
Subsequently, the image through $U^M \circ -$ of the augmented simplicial object $n \mapsto Q^{n+1}$ splits. In particular, for any object $A \in \categ A$ the augmented simplicial object in 
$\categ A$
$$
n \mapsto Q^{n+1} A
$$
is $U^M$-split. Hence, by monadicity it is colimiting.
\end{proof}

\begin{lemma}\label{lemmakanextension}
Let $D : \categ I \times \categ J \to \categ E$ be a functor of $\infty$-categories. The two following assertions are equivalent
\begin{enumerate}
    \item $D$ admits a left Kan extension with respect to the inclusion
$$
\categ I \times \categ J \to \categ I^\triangleright \times \categ J;
$$
\item the diagram
$$
D_j : \categ I \simeq \categ I \times \{j\} \hookrightarrow
\categ I \times \categ J \to \categ E
$$
admits a colimit for any $j \in \categ J$.
\end{enumerate}
Moreover, in the case these are true, the restriction of the left Kan extension to $\categ I^\triangleright \times \{j\} $ is the colimit of $D_j$ for any $j \in \categ J$.
\end{lemma}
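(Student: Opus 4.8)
The plan is to reduce the statement to the pointwise existence criterion for left Kan extensions \cite{Lurie17} along the fully faithful inclusion $i : \categ I \times \categ J \hookrightarrow \categ I^{\triangleright} \times \categ J$. Since $i$ is fully faithful, the only objects of $\categ I^\triangleright \times \categ J$ lying outside its essential image are the pairs $(\infty, j)$, where $\infty \in \categ I^\triangleright$ denotes the cone point and $j$ ranges over $\categ J$; at every object of the image the left Kan extension exists automatically and recovers $D$. Thus $D$ admits a left Kan extension along $i$ if and only if, for each $j \in \categ J$, the induced diagram on the comma $\infty$-category
$$
\left(\categ I \times \categ J\right) \times_{\categ I^\triangleright \times \categ J} \left(\categ I^\triangleright \times \categ J\right)_{/(\infty, j)} \longrightarrow \categ I \times \categ J \xrightarrow{\ D\ } \categ E
$$
admits a colimit, and in that case the value of the Kan extension at $(\infty, j)$ is this colimit together with its tautological cocone.

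The key computation is to identify this comma $\infty$-category. Because $\infty$ is a terminal object of $\categ I^\triangleright$, the slice $\left(\categ I^\triangleright\right)_{/\infty}$ is equivalent to $\categ I^\triangleright$ itself; and slices commute with products, so $\left(\categ I^\triangleright \times \categ J\right)_{/(\infty, j)} \simeq \categ I^\triangleright \times \categ J_{/j}$. Pulling back along $i$ keeps exactly those objects whose first coordinate lands in $\categ I \subseteq \categ I^\triangleright$, which identifies the comma $\infty$-category with $\categ I \times \categ J_{/j}$ and the diagram above with the composite $\categ I \times \categ J_{/j} \xrightarrow{\ \id \times p\ } \categ I \times \categ J \xrightarrow{\ D\ } \categ E$, where $p : \categ J_{/j} \to \categ J$ is the forgetful functor.

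Next I would invoke cofinality. The slice $\categ J_{/j}$ has a terminal object $\id_j$, so the inclusion $\{\id_j\} \hookrightarrow \categ J_{/j}$ is cofinal \cite{Lurie17}; since cofinal functors are stable under products, the inclusion $\categ I \simeq \categ I \times \{\id_j\} \hookrightarrow \categ I \times \categ J_{/j}$ is again cofinal. Restricting the diagram above along this cofinal inclusion yields precisely $D_j : \categ I \simeq \categ I \times \{j\} \hookrightarrow \categ I \times \categ J \xrightarrow{D} \categ E$. Consequently the colimit over the comma $\infty$-category exists if and only if $\colim D_j$ exists, and the two agree. Combined with the first paragraph this gives the equivalence of (1) and (2); moreover the value of the Kan extension at $(\infty, j)$ is $\colim D_j$, and its tautological cocone, transported along the cofinal inclusion, is the colimiting cocone of $D_j$, which is exactly the assertion that the restriction of the Kan extension to $\categ I^\triangleright \times \{j\}$ is a colimit diagram for $D_j$.

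The main obstacle I anticipate is making the identification of the comma $\infty$-category precise and checking the compatibility of cocones: one must verify that the equivalence $\left(\categ I^\triangleright \times \categ J\right)_{/(\infty, j)} \simeq \categ I^\triangleright \times \categ J_{/j}$ is natural enough that, after the pullback and after passing along the cofinal inclusion, the cocone produced by the pointwise Kan extension formula coincides with the standard colimit cocone of $D_j$ indexed by $\categ I^\triangleright$. The stability of cofinal maps under products and the equivalence of a slice over a terminal object with the whole $\infty$-category are the two formal inputs carrying the argument, and both are available in \cite{Lurie17}.
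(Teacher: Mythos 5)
Your proposal is correct and follows essentially the same route as the paper's proof: both invoke the pointwise existence criterion for left Kan extensions (\cite[Lemma 4.3.2.13]{Lurie17}) at the cone points $(\ast, j)$ and then conclude by the cofinality of the inclusion $\categ I \simeq \categ I \times \{j\} \to (\categ I \times \categ J)_{/(\ast,j)}$. The only difference is that you justify that cofinality explicitly (identifying the comma $\infty$-category with $\categ I \times \categ J_{/j}$ and using the terminal object $\ii_j$ together with stability of cofinal maps under products), a step the paper merely asserts.
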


\begin{proof}
Let us denote $\ast$ the final object of $\categ I^\triangleright$.
From \cite[Lemma 4.3.2.13]{Lurie17}, we know that the assertion (1) is equivalent to the fact that the functor
$$
(\categ I \times \categ J)_{/(\ast, j)} \to \categ I \times \categ J \to \categ E
$$
admits a colimit for any $j \in \categ J$. Then this colimit will give the value of the left Kan extension of $D$ on the object $(\ast, j)$.

To conclude, it suffices to notice that the functor
$$
\categ I \simeq \categ I \times \{j\} \to
(\categ I \times \categ J)_{/(\ast, j)}
$$
is cofinal.
\end{proof}

\begin{lemma}\label{lemmafunctoralkan}\cite[Corollary 4.3.2.16 and Proposition 4.3.2.17]{Lurie17}
Let $i : \categ K \to \categ J$ be an inclusion functor. If every functor $F$ from $\categ I$ to an $\infty$-category $\categ E$ admits a left Kan extension, then the functor
$$
\Fun{\categ J}{\categ E} \to
\Fun{\categ K}{\categ E}
$$
given by precomposition with $i$ admits a left adjoint whose value on any functor is given by its left Kan extension.
\end{lemma}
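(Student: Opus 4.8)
The plan is to realise the left adjoint as a left Kan extension functor and then identify the adjunction through the universal property of such extensions; I write $i^\ast$ for the precomposition functor, and since the extension is taken along $i : \categ K \to \categ J$ I read the hypothesis as saying that every $F : \categ K \to \categ E$ admits a left Kan extension along $i$. First I would introduce the full subcategory $\categ{LKE} \subseteq \Fun{\categ J}{\categ E}$ spanned by the functors $G$ that are left Kan extensions of their restriction $i^\ast G = G \circ i$. By the comma-category criterion \cite[Lemma 4.3.2.13]{Lurie17} already invoked for Lemma \ref{lemmakanextension}, the hypothesis amounts to the existence, for each object $j \in \categ J$, of the colimit of the restriction of $F$ to $\categ K \times_{\categ J} \categ J_{/j}$. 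The essential uniqueness of these relative colimits then yields, by \cite[Proposition 4.3.2.15]{Lurie17}, that the restriction functor
$$
\categ{LKE} \to \Fun{\categ K}{\categ E}
$$
is a trivial Kan fibration, hence an equivalence.

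Choosing a section $s$ of this trivial fibration, I would set
$$
i_! : \Fun{\categ K}{\categ E} \xrightarrow{\ s\ } \categ{LKE} \hookrightarrow \Fun{\categ J}{\categ E},
$$
so that each $i_! F$ is by construction a left Kan extension of $F$ and the restriction $i^\ast i_! F$ is canonically equivalent to $F$, supplying the unit. To upgrade the pair $(i_!, i^\ast)$ to an adjunction it suffices to check that for all $F \in \Fun{\categ K}{\categ E}$ and $H \in \Fun{\categ J}{\categ E}$ the unit induces an equivalence
$$
\Mapp{\Fun{\categ J}{\categ E}}{i_! F}{H} \to \Mapp{\Fun{\categ K}{\categ E}}{F}{i^\ast H}
$$
naturally in both variables; this is precisely the universal mapping property of left Kan extensions, and combined with the naturality of $s$ it produces the adjunction $i_! \dashv i^\ast$ of \cite[Proposition 4.3.2.17]{Lurie17}, whose value on any functor is its left Kan extension, as asserted.

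The main obstacle is the step establishing the displayed mapping-space equivalence, i.e.\ passing from the pointwise description of $i_! F(j)$ as a colimit over $\categ K \times_{\categ J} \categ J_{/j}$ to the global universal property. This rests on a cofinality argument identifying that comma-category colimit with the object corepresenting the relevant mapping space, which is the substance of \cite[Corollary 4.3.2.16]{Lurie17}. Once this is in place, the remaining points — the choice of section $s$ and the coherent naturality of the unit — are routine bookkeeping.
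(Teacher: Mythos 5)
Your proposal is correct and takes essentially the same route as the paper: the paper offers no argument of its own for this lemma, citing \cite[Corollary 4.3.2.16 and Proposition 4.3.2.17]{Lurie17} directly, and your reconstruction --- the trivial Kan fibration from \cite[Proposition 4.3.2.15]{Lurie17} restricted to the full subcategory of functors that are left Kan extensions of their restrictions, a choice of section $s$, and the verification of the adjunction via the mapping-space universal property --- is precisely Lurie's proof of those cited results (including your correct reading of the paper's typo $\categ I$ as $\categ K$).
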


\begin{proposition}\label{propositionstronglymonadic}
Let $U^M : \categ A \to \categ C$ be a strongly monadic functor. If the $\infty$-category $\categ C$ is cocomplete, then $\categ A$ is also cocomplete.
\end{proposition}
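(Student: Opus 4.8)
The plan is to resolve an arbitrary diagram in $\categ A$ by free objects via the bar construction of Lemma \ref{lemmamonadic}, thereby reducing the existence of a general colimit to two things that are already available: the existence of geometric realisations in $\categ A$ (built into strong monadicity) and the existence of colimits of \emph{free} objects, which hold because $T_M$ is a left adjoint and $\categ C$ is cocomplete. Concretely, let $\categ K$ be a small $\infty$-category and $D : \categ K \to \categ A$ a diagram; I must produce $\colim_{\categ K} D$. Using the functor $\Delta_+^\op \to \Fun{\categ A}{\categ A}$, $n \mapsto Q^{n+1}$, of Lemma \ref{lemmamonadic} together with the evaluation functor, I form
\[
\bar D : \Delta^\op \times \categ K \to \categ A, \qquad (n,k) \mapsto Q^{n+1}\big(D(k)\big).
\]
By Lemma \ref{lemmamonadic}, for each fixed $k$ the augmentation exhibits $D(k)$ as the geometric realisation of $n \mapsto Q^{n+1}(D(k))$; since colimits in a functor category are computed objectwise, this means that $D$ is the colimit over $\Delta^\op$ of the simplicial object $n \mapsto Q^{n+1}\circ D$ in $\Fun{\categ K}{\categ A}$.

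First I would establish the columnwise colimits. For each $n$ one has $Q^{n+1} = T_M M^n U^M$, so the diagram $k \mapsto Q^{n+1}(D(k))$ is the image under $T_M$ of the diagram $k \mapsto M^n U^M D(k)$ in $\categ C$. The latter has a colimit because $\categ C$ is cocomplete, and $T_M$, being a left adjoint, preserves it; hence $\colim_{\categ K} Q^{n+1} D$ exists in $\categ A$ for every $n$. Next, applying Lemma \ref{lemmakanextension} with $\categ I = \categ K$ and $\categ J = \Delta^\op$, together with the functoriality of left Kan extensions from Lemma \ref{lemmafunctoralkan}, these columnwise colimits assemble into a genuine simplicial object
\[
C_\bullet : \Delta^\op \to \categ A, \qquad n \mapsto \colim_{\categ K} Q^{n+1} D,
\]
namely the restriction to the cone point of the left Kan extension of $\bar D$ along $\categ K \times \Delta^\op \to \categ K^\triangleright \times \Delta^\op$. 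Since $\categ A$ admits geometric realisations, $|C_\bullet|$ exists in $\categ A$.

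It then remains to identify $|C_\bullet|$ with $\colim_{\categ K} D$, which is a Fubini-type interchange of the geometric realisation with the colimit over $\categ K$. I would argue in two steps. Because each columnwise colimit $\colim_{\categ K}\bar D(n,-)$ exists and the $C_\bullet$ organises them, the colimit of $\bar D$ over the product $\Delta^\op \times \categ K$ exists and is computed as $|C_\bullet|$. On the other hand, because each row realisation $\colim_{\Delta^\op}\bar D(-,k)$ exists and equals $D(k)$, the same interchange identifies $\colim_{\Delta^\op \times \categ K}\bar D$ with $\colim_{\categ K} D$; hence the latter exists and is equivalent to $|C_\bullet|$. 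As $\categ K$ and $D$ were arbitrary, this shows $\categ A$ is cocomplete.

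The main obstacle is precisely this last interchange: in the $\infty$-categorical setting one cannot simply manipulate colimits objectwise but must produce the iterated colimits as honest functors and match the two groupings of $\bar D$ with the colimit over the product. This is exactly the content packaged by Lemmas \ref{lemmakanextension} and \ref{lemmafunctoralkan}: the former produces, from pointwise colimits along one variable, a left Kan extension whose restriction to the cone point computes those colimits functorially in the other variable, and the latter upgrades this to an adjoint, hence functorial, construction; iterating it in each variable yields the two descriptions of $\colim_{\Delta^\op \times \categ K}\bar D$ and so the desired equivalence. A secondary point to verify is that $\bar D$ is a well-defined functor on the product, i.e. that the bar resolution of Lemma \ref{lemmamonadic} is natural enough to be composed with $D$ and evaluated, but this is immediate from the construction of the functor $n \mapsto Q^{n+1}$ given there.
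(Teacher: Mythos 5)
Your proposal is correct and takes essentially the same route as the paper: both resolve $D$ by the bar construction of Lemma \ref{lemmamonadic}, obtain the columnwise colimits over the indexing category from cocompleteness of $\categ C$ together with the fact that $Q^{n+1}D = T_M M^n U^M D$ factors through the colimit-preserving $T_M$, and then use Lemmas \ref{lemmakanextension} and \ref{lemmafunctoralkan} plus geometric realisations in $\categ A$ to perform the interchange. The only cosmetic difference is that your two Fubini steps through the product colimit $\varinjlim_{\Delta^\op \times \categ K} \bar D$ are packaged in the paper as a single two-stage left Kan extension over $K^\triangleright \times \Delta_+^\op$ (Lemma \ref{lemmatechnicalmonadic}), with the identification of its restriction with $D^{\Delta_+}$ done by uniqueness of Kan extensions along $j$ rather than by an explicit cofinality argument.
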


\begin{proof}
Let us denote $Q = T_M \circ U^M$. Let us consider a diagram functor $D : K \to \categ A$. Let us show that $D$ admits a colimit. From Lemma \ref{lemmamonadic} we get a functor
\begin{align*}
   D^{\Delta_+} :  K \times \Delta_+^\op &\to \categ A
    \\
    (k,n) &\mapsto Q^{n+1} D(k) .
\end{align*}
We know from Lemma \ref{lemmatechnicalmonadic}, that the functor $D^{\Delta_+}$
admits a left kan extension $X$ with respect to the inclusion
$$
K \times \Delta_+^\op \to K^\triangleright \times \Delta_+^\op .
$$
Then by Lemma \ref{lemmakanextension}, the functor
$$
K \simeq K \times \{-1\} \hookrightarrow K \times \Delta_+^\op \to \categ A
$$
which is just $D$ admits a colimit.
\end{proof}

\begin{lemma}\label{lemmatechnicalmonadic}
The functor $D^{\Delta_+}$ described in the proof of Proposition \ref{propositionstronglymonadic}
admits a left kan extension $X$ with respect to the inclusion
$$
K \times \Delta_+^\op \to K^\triangleright \times \Delta_+^\op .
$$
\end{lemma}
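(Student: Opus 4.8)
The plan is to build the left Kan extension $X$ in stages, factoring the inclusion $K \times \Delta_+^\op \to K^\triangleright \times \Delta_+^\op$ through the intermediate category $K^\triangleright \times \Delta^\op$ and invoking the transitivity of left Kan extensions \cite[Proposition 4.3.2.8]{Lurie17}. Write $G$ for the restriction of $D^{\Delta_+}$ to $K \times \Delta^\op$, so that $G(k,n) = Q^{n+1}D(k)$ with $n \geq 0$. First I would extend in the $K$-direction. For every $n \in \Delta^\op$ the functor $G_n = Q^{n+1}D$ factors through $T_M$, namely $Q^{n+1}D = T_M(U^M Q^n D)$; since $\categ C$ is cocomplete the diagram $U^M Q^n D : K \to \categ C$ admits a colimit, and since $T_M$ is a left adjoint it preserves it, so $G_n$ admits a colimit $C_n := \colim_K Q^{n+1}D$. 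By Lemma \ref{lemmakanextension} (with $\categ I = K$ and $\categ J = \Delta^\op$), $G$ therefore admits a left Kan extension $Y : K^\triangleright \times \Delta^\op \to \categ A$ along $K\times\Delta^\op \to K^\triangleright\times\Delta^\op$, whose restriction to $\{\ast\}\times\Delta^\op$ is the simplicial object $C_\bullet$ and which satisfies $Y(k,\bullet) = Q^{\bullet+1}D(k)$ for $k \in K$.

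Next I would extend $Y$ in the $\Delta$-direction along $K^\triangleright\times\Delta^\op \to K^\triangleright\times\Delta_+^\op$, adding the cone point $-1$ of $\Delta_+^\op = (\Delta^\op)^\triangleright$. By Lemma \ref{lemmakanextension} (after reordering the two factors) this extension exists provided $Y(k',\bullet) : \Delta^\op \to \categ A$ admits a colimit for each $k' \in K^\triangleright$. For $k' = k \in K$ this is the colimit of the bar resolution $Q^{\bullet+1}D(k)$, which exists and equals $D(k)$ by Lemma \ref{lemmamonadic}; for $k' = \ast$ it is the geometric realisation of $C_\bullet$, which exists because $\categ A$ has geometric realisations. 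This produces a functor $X : K^\triangleright \times \Delta_+^\op \to \categ A$, and by transitivity $X$ is the left Kan extension of $G$ along the corner inclusion $K\times\Delta^\op \to K^\triangleright\times\Delta_+^\op$.

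It then remains to identify $X$ as a left Kan extension of $D^{\Delta_+}$ itself, rather than of $G$. The key observation is that $D^{\Delta_+}$ is already the left Kan extension of $G$ along $K\times\Delta^\op \to K\times\Delta_+^\op$: by Lemma \ref{lemmamonadic} the augmented simplicial object $Q^{\bullet+1}D(k)$ exhibits $D^{\Delta_+}(k,-1) = D(k)$ as the colimit of its simplicial part for every $k$, which is precisely the pointwise criterion of Lemma \ref{lemmakanextension} for this extension. Factoring the corner inclusion the other way, as $K\times\Delta^\op \to K\times\Delta_+^\op \to K^\triangleright\times\Delta_+^\op$, and applying transitivity \cite[Proposition 4.3.2.8]{Lurie17} once more, the functor $X$ — being the left Kan extension of $G$ along the full corner inclusion and $D^{\Delta_+}$ being the left Kan extension of $G$ along the first map — is the left Kan extension of $D^{\Delta_+}$ along $K\times\Delta_+^\op \to K^\triangleright\times\Delta_+^\op$, as required.

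I expect the main obstacle to be precisely a threat of circularity. A naive pointwise evaluation of the desired Kan extension at the augmentation object $(\ast,-1)$ returns $\colim_K D$, which is exactly the colimit whose existence this lemma is designed to supply through Proposition \ref{propositionstronglymonadic}; so one cannot simply apply the pointwise criterion of Lemma \ref{lemmakanextension} directly to $D^{\Delta_+}$ over the whole of $\Delta_+^\op$. The resolution is to perform the extension in the free, bar-resolution direction first, where the intermediate colimits $C_n$ exist for formal adjunction reasons, and to recognise via Lemma \ref{lemmamonadic} that $D^{\Delta_+}$ is itself a Kan extension of its simplicial restriction $G$. With these two inputs the only genuinely new colimit one is forced to construct is the geometric realisation $|C_\bullet|$, whose existence is guaranteed by strong monadicity; the transitivity argument then assembles everything without ever computing $\colim_K D$ in isolation.
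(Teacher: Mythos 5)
Your proof is correct and follows essentially the same route as the paper: restrict to $K\times\Delta^\op$, extend first along $K\times\Delta^\op \to K^\triangleright\times\Delta^\op$ using that $Q^{n+1}D = T_M U^M Q^n D$ factors through the colimit-preserving $T_M$ into the cocomplete $\categ C$, then extend along the $\Delta$-direction using geometric realisations in $\categ A$, and finally identify the result as a left Kan extension of $D^{\Delta_+}$ via Lemma \ref{lemmamonadic} and the transitivity result \cite[Proposition 4.3.2.8]{Lurie17}. Your closing remark about avoiding circularity at the cone point $(\ast,-1)$ accurately captures why the paper orders the extensions this way.
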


\begin{proof}
Let us consider the following commutative square
$$
\begin{tikzcd}
K \times \Delta^\op
\ar[r,"i"] \ar[d, "j"]
& K^\triangleright \times \Delta^\op
\ar[d,"k"]
\\
K \times \Delta_+^\op
\ar[r,"l"]
& K^\triangleright \times \Delta_+^\op .
\end{tikzcd}
$$
Let $D^\Delta$ be the restriction of $D^{\Delta_+}$ onto $K \times \Delta^\op$. Then, by Lemma \ref{lemmakanextension} and Lemma \ref{lemmamonadic}, $D^{\Delta_+}$ is the left Kan extension of $D^\Delta$
along $j$.

Let us show that $D^\Delta$ admits a left Kan extension with respect to $k \circ i \simeq l \circ j$. First, for any $n \in \Delta$, the functor
$$
K \simeq K \times \{n\} \hookrightarrow K \times \Delta^\op \xrightarrow{D^\Delta} \categ A
$$
admits a colimit since it decomposes as
$$
K \xrightarrow{U^M Q^n D} \categ C \xrightarrow{T_M} \categ A
$$
and since $\categ C$ is cocomplete and $T_M$ preserves colimits. Hence by Lemma \ref{lemmakanextension}, $D^\Delta$ admits a left Kan extension along $i$. Again by Lemma \ref{lemmakanextension} and since $\categ A$ admits geometric realisations, then this left Kan extension admits a further left Kan extension along $k$, that we denote $X$. Then, $X$ is the left Kan extension of $D^\Delta$ along $k \circ i \simeq l \circ j$.

Using Lemma \ref{lemmakanextension}, we can prove that the restriction of $X$ to $K \times \Delta_+^\op$ is a left Kan extension of $D^\Delta$ with respect $j$, as is $D^{\Delta_+}$. Hence, both functors are equivalent (apply for instance Lemma \ref{lemmafunctoralkan}). Moreover, $X$ is a left Kan extension of $D^{\Delta_+}$ (\cite[Proposition 4.3.2.8]{Lurie17}).
\end{proof}

\begin{definition}
An adjunction is called $\omega$-accessible if the source of its right adjoint has filtered colimits and if this right adjoint preserves filtered colimits.
\end{definition}

\begin{corollary}
Let $U^M : \categ A \to \categ C$ be a strongly monadic functor and let us suppose that $\categ C$ is cocomplete. Then, the two following assertions are equivalent.
\begin{enumerate}
    \item the adjunction $T_M \dashv U^M$ is $\omega$-accessible, that is $U^M$ preserve filtered colimits;
    \item $U^M$ preserves small sifted limits.
\end{enumerate}
\end{corollary}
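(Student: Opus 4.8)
The plan is to reduce both assertions to a single standard fact: for $\infty$-categories that admit sifted colimits, a functor preserves sifted colimits if and only if it preserves filtered colimits and geometric realisations of simplicial diagrams (\cite[Corollary 5.5.8.17]{Lurie17}). I read assertion (2) as saying that $U^M$ preserves small sifted \emph{colimits}, since sifted diagrams are colimit-shaped and this is the only reading compatible with (1). Before invoking the fact, I would first check that the ambient categories admit the relevant colimits: $\categ C$ is cocomplete by hypothesis, and $\categ A$ is cocomplete by Proposition \ref{propositionstronglymonadic}, so all sifted colimits appearing below exist in both categories.

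The implication (2) $\Rightarrow$ (1) is immediate: every filtered $\infty$-category is sifted, so a functor that preserves all small sifted colimits in particular preserves filtered colimits, which is exactly the $\omega$-accessibility of the adjunction $T_M \dashv U^M$.

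For (1) $\Rightarrow$ (2), I would exploit the definition of strong monadicity. By definition $U^M$ preserves geometric realisations, that is colimits of simplicial diagrams. Assuming (1), the functor $U^M$ also preserves filtered colimits. Feeding these two properties into the cited decomposition of sifted colimits into filtered colimits and geometric realisations yields that $U^M$ preserves all small sifted colimits, which is (2).

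I do not expect a genuine obstacle here: the statement is essentially a repackaging of the definition of strong monadicity together with the standard generation of sifted colimits by filtered colimits and geometric realisations. The only point requiring care is confirming that $\categ A$ admits the colimits in question so that the characterisation of \cite{Lurie17} applies, and this is precisely what Proposition \ref{propositionstronglymonadic} provides.
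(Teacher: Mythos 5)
Your proof is correct and follows essentially the same route as the paper: establish cocompleteness of $\categ A$ via Proposition \ref{propositionstronglymonadic} and then invoke \cite[Corollary 5.5.8.17]{Lurie17} to decompose sifted colimits into filtered colimits and geometric realisations, the latter being preserved by definition of strong monadicity. Your reading of assertion (2) as concerning sifted \emph{colimits} (the word ``limits'' in the statement being a typo) matches the paper's intent, and your explicit unpacking of the two implications is just a more detailed write-up of the same argument.
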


\begin{proof}
Since $\categ C$ is cocomplete, then by Proposition \ref{propositionstronglymonadic}, $\categ A$ is also cocomplete. Then, the result is a direct application of the fact that a functor from a cocomplete category preserves sifted colimits if and only if it preserves filtered colimits and geometric realisations (\cite[Corollary 5.5.8.17]{Lurie17}).
\end{proof}

\subsection{Presentations of algebras}
\subsubsection{The context}
\label{thecontext}
Let $\categ C$ be an $\omega$-presentable $\infty$-category. Thus, let us consider a small $\infty$-category $\categ D$, a functor $i: \categ D \to \categ C$ and the induced adjunction
$$
\begin{tikzcd}
\Psh(\categ D)
\ar[rr, shift left, "i_!"]
&&
\categ C
\ar[ll, shift left, "i^\ast"]
\end{tikzcd}
$$
so that the map $i_! i^\ast \to \id$ is an equivalence and so that $i^\ast$ preserves filtered colimits.

Besides, let us consider another adjunction
$$
\begin{tikzcd}
\categ C
\ar[rr, shift left, "T_M"]
&&
\categ A
\ar[ll, shift left, "U^M"]
\end{tikzcd}
$$
that is $\omega$-accessible and strongly monadic. Thus, $\categ A$ is cocomplete by Proposition \ref{propositionstronglymonadic} and $U^M$ preserves sifted colimits.

Let us consider a full subcategory embedding $j : \categ B \hookrightarrow \categ A$ so that $\categ B$ is small and so that it contains the image of $\categ D$, in the sense that for any object $d$ of $\categ D$ its image through the functor $T_M \circ i :  \categ D \to \categ A$ belong essentially to $\categ B$. We thus get a functor $t : \categ D \to \categ B$ so that the following square of $\infty$-categories
is commutative (up to homotopy).
$$
\begin{tikzcd}
\categ B
\ar[d,swap, "j"]
& \categ D
\ar[l,"t"] \ar[d, "i"]
\\
\categ A
& \categ C
\ar[l,"T_M"]
\end{tikzcd}
$$
From the universal property of preasheaves categories and the fact that $T_M$ preserves colimits, one gets another commutative square
$$
\begin{tikzcd}
\categ{Psh}(\categ B)
\ar[d,swap,"j_!"]
& \categ{Psh}(\categ D)
\ar[l,"t_!"] \ar[d,"i_!"]
\\
\categ A
& \categ C .
\ar[l,"T_M"]
\end{tikzcd}
$$
Noticing that all the functors in the square just above are left adjoint, one gets another square
$$
\begin{tikzcd}
\categ{Psh}(\categ B)
\ar[r,"t^\ast"]
& \categ{Psh}(\categ D)
\\
\categ A
\ar[r,"U^M"] \ar[u,"j^\ast"]
& \categ C .
\ar[u,"i^\ast"]
\end{tikzcd}
$$
whose functors are all right adjoints. We can notice that the right adjoint functor 
\begin{align*}
t^\ast : \Psh(\categ D) &\to \Psh(\categ B)    
\\
F & \mapsto F \circ t
\end{align*}
preserves colimits. Actually, it has a further right adjoint given by right Kan extension $t_\ast$ of $t$.

\begin{hypothesis}
Let us suppose that: 
\begin{itemize}
    \itemt the full subcategory $\categ B$ is stable under finite coproducts;
    \itemt its objects are $\omega$-small in $\categ A$; equivalently, the functor $\categ A \to \Psh(\categ B)$ preserves filtered colimits.
\end{itemize}
\end{hypothesis}

In particular, the image of the functor $\categ A \to \categ{Psh}(\categ B)$ lies in $\categ{Psh}_{\Sigma}(\categ B)$.

Moreover, let $\categ E$ be a full subcategory of $\categ{Psh}_{\Sigma}(\categ B)$ that satisfies the following hypothesis.

\begin{hypothesis}
We suppose that
\begin{itemize}
    \itemt the subcategory $\categ E$ contains all the images of objects of $\categ A$;
    \itemt the functor $\categ E \to  \categ{Psh}_{\Sigma}(\categ B)$ has a left adjoint and preserves filtered colimits; equivalently, the functor $\categ E \to  \categ{Psh}(\categ B)$ has a left adjoint and preserves filtered colimits.
\end{itemize}
\end{hypothesis}

The adjunction $j_! \dashv j^\ast$ relating $\categ A$ to $\Psh(\categ B)$ decomposes into a sequence of two adjunctions
$$
\begin{tikzcd}
\categ{Psh}(\categ B) \ar[rr, shift left,"{p}"]
&& \categ{E} \ar[rr, shift left,"{l}"] \ar[ll, shift left, "q"]
&& \categ{A}  \ar[ll, shift left,"r"]
\end{tikzcd}
$$
where the functor $l$ is just given by $j_! \circ q$.

\begin{remark}
The hypothesis are redundant. For instance the fact that the functor $U_M$ preserves filtered colimits follows from the fact that the composite functor $\categ A \to \Psh(\categ B) \to \Psh(\categ D)$ preserves filtered colimits and the functor $\categ C \to \Psh(\categ D)$ preserves filtered colimits and is conservative.
\end{remark}

\subsubsection{Presenting algebras}

Our goal is to prove the following theorem.

\begin{theorem}\label{thmmain}
The following assertions are equivalent.
\begin{enumerate}
    \item the adjunction $l \dashv r$ relating $\categ A$ to $\categ E$ is an adjoint equivalence;
    \item the composite functor
    $$
\categ E \xrightarrow{q} \Psh(\categ B) \xrightarrow{t^\ast} \Psh(\categ D)
$$
is conservative and sends elements of $\categ E$ to the essential image of the functor $i^\ast$ from $\categ C$ to $\Psh(\categ D)$;
    \item the composite functor
$$
\categ E \xrightarrow{q} \Psh(\categ B) \xrightarrow{t^\ast} \Psh(\categ D) \xrightarrow{i_!} \categ C
$$
is conservative.
\end{enumerate}
\end{theorem}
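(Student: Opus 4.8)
The plan is to reduce everything to a single comparison of functors out of $\Psh(\categ B)$. First I would record the basic identity coming from the commuting square of right adjoints: since $j^\ast \simeq q r$ and $t^\ast j^\ast \simeq i^\ast U^M$, composing with $i_!$ and using $i_! i^\ast \simeq \id_{\categ C}$ yields a natural equivalence $U^M \simeq i_! t^\ast q r$, so that $U^M$ factors as $\categ A \xrightarrow{r} \categ E \xrightarrow{i_! t^\ast q} \categ C$. As $U^M$ is conservative (being monadic), this immediately forces $r$ to be conservative, with no extra hypothesis. Because $r$ is conservative, the triangle identities show that the counit $lr \to \id_{\categ A}$ is an equivalence as soon as the unit $\id_{\categ E}\to rl$ is; hence assertion (1) is equivalent to the single statement that $l$ is fully faithful, i.e. that the unit $\eta_e : e \to rl(e)$ is an equivalence for every $e \in \categ E$.

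Next I would introduce the natural transformation $\alpha : i_! t^\ast \Rightarrow U^M j_!$ of functors $\Psh(\categ B) \to \categ C$, obtained by whiskering the unit $\id \to j^\ast j_!$ of $j_! \dashv j^\ast$ with $t^\ast$ and $i_!$ and then using the identifications $t^\ast j^\ast \simeq i^\ast U^M$ and $i_! i^\ast \simeq \id$. Unwinding definitions, its value on $q(e)$ is identified with the image $\Psi(\eta_e)$ of the unit under $\Psi := i_! t^\ast q$. Thus, if I can prove that $\alpha$ restricts to an equivalence on the image of $\categ E$, then $\Psi(\eta_e)$ is always an equivalence, and the equivalence of (1) and (3) is immediate: (3) says $\Psi$ is conservative, which together with ``$\Psi(\eta_e)$ always an equivalence'' gives ``$\eta_e$ always an equivalence'', i.e. (1); conversely (1) makes $r$ an equivalence and exhibits $\Psi \simeq U^M r^{-1}$ as conservative, giving (3).

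The heart of the argument, and the step I expect to be the main obstacle, is therefore showing that $\alpha_G$ is an equivalence for every $G$ in $\categ E \subseteq \Psh_\Sigma(\categ B)$. Here I would argue in two moves. On a representable $y(b)$ with $b \in \categ B$, the free--forgetful adjunction $T_M \dashv U^M$ gives, for every $d \in \categ D$, a chain of equivalences $t^\ast y(b)(d) \simeq \Mapp{\categ{A}}{T_M i(d)}{b} \simeq \Mapp{\categ{C}}{i(d)}{U^M b} \simeq i^\ast(U^M b)(d)$, using that $t(d) \simeq T_M i(d)$; hence $t^\ast y(b) \simeq i^\ast U^M(b)$, and applying $i_!$ yields $i_! t^\ast y(b) \simeq U^M(b) \simeq U^M j_! y(b)$, the comparison map being $\alpha_{y(b)}$. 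So $\alpha$ is an equivalence on all representables. For the second move I would observe that both $i_! t^\ast$ and $U^M j_!$ preserve sifted colimits (the functors $i_!$, $t^\ast$ and $j_!$ preserve all colimits, while $U^M$ preserves sifted colimits), and that every object of $\Psh_\Sigma(\categ B)$ is a geometric realisation of a simplicial object valued in $\Ind_\omega(\categ B)$, hence a sifted colimit of representables. A natural transformation between sifted-colimit-preserving functors that is an equivalence on representables is then an equivalence on all of $\Psh_\Sigma(\categ B)$, in particular on $\categ E$.

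Finally I would close the cycle by treating (2). The implication (1) $\Rightarrow$ (2) is direct: when $r$ is an equivalence, $\Phi := t^\ast q \simeq i^\ast U^M r^{-1}$ is conservative and, writing $e \simeq r(A)$, one gets $\Phi(e) \simeq i^\ast U^M(A)$, which lies in the essential image of $i^\ast$. For (2) $\Rightarrow$ (3) I would use that $i_!$ is conservative on the essential image of $i^\ast$ (there $i^\ast i_! \simeq \id$), so that conservativity of $\Phi$ together with the image condition upgrades to conservativity of $\Psi = i_! \Phi$. Combined with the already-established equivalence of (1) and (3), this yields the full cycle (1) $\Rightarrow$ (2) $\Rightarrow$ (3) $\Rightarrow$ (1). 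The only genuinely delicate point is the sifted-colimit reduction of the previous paragraph; everything else is bookkeeping with adjunctions and the identity $U^M \simeq i_! t^\ast q r$.
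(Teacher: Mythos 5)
Your proof is correct, and it rests on the same core ingredients as the paper's: full faithfulness of $j$ on representables, the generation of $\Psh_\Sigma(\categ B)$ from representables under filtered colimits and geometric realisations, the contrast between the colimit-preserving functors $i_!, t^\ast, j_!$ and the merely sifted-colimit-preserving $U^M$, and a single use of conservativity. The organisation, however, is genuinely different. The paper proves (3) $\Rightarrow$ (1) by factoring the unit $F \to j^\ast j_!(F)$ into three maps and invoking the conservativity hypothesis in the middle of the colimit bookkeeping, to promote the interchange map $\varinjlim_{n} j^\ast j_!(G_n) \to j^\ast(\varinjlim_{n} j_!(G_n))$ to an equivalence (Lemma \ref{lemmaequivalence}); you instead isolate the unconditional statement that $\alpha : i_! t^\ast \Rightarrow U^M j_!$ is an equivalence on all of $\Psh_\Sigma(\categ B)$ --- equivalently, that $\Psi(\eta_e)$ is always an equivalence --- and apply conservativity exactly once, at the very end. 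This buys a sharper intermediate result (valid without hypothesis (3)) and makes the logical role of each hypothesis transparent; it also lets you observe, correctly, that $r$ is conservative with no hypothesis at all, from $U^M \simeq i_! t^\ast q r$ and monadicity alone, whereas the paper's Lemma \ref{lemmaconservative} carries hypothesis (3) that its own proof never really needs. Two minor points of tightening: the phrase ``hence a sifted colimit of representables'' should read ``hence lies in the closure of the representables under sifted colimits'' (what your closure argument actually handles is a geometric realisation of filtered colimits of representables, which suffices); and on representables it is quicker to note that the unit $y(b) \to j^\ast j_!\, y(b)$ is already an equivalence by full faithfulness of $j$, so that $\alpha_{y(b)}$ is one, than to recompute $t^\ast y(b) \simeq i^\ast U^M(b)$ by hand through the adjunctions --- though your computation is also valid.
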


\begin{proof}
First, let us suppose (1), that is $l \dashv r$ is an adjoint equivalence, then
$$
t^\ast \circ q \simeq t^\ast \circ q \circ r \circ l
\simeq i^\ast \circ U^M
$$
is conservative and its image lies in the essential image of $i^\ast$. This proves (2).

Let us suppose (2). For any morphism $f:X \to Y$ in $\categ E$, if $i_! t^\ast q(f)$ is an equivalence, then $i\ast i_!t^\ast q(f)$ is an equivalence and so $t^\ast q(f)$ is an equivalence since the maps $X \to i^\ast i_! (X)$ and $Y \to i^\ast i_! (Y)$ are equivalences. Thus, $f$ is an equivalence since $t^\ast q$ is conservative. So $i_! t^\ast q$ is conservative, that is (3) is true.

Finally, let us suppose (3), that is $i_! \circ u^\ast \circ q$ is conservative. We can prove (1), that is the adjunction $l \dashv r$ is an adjoint equivalence by showing that
\begin{enumerate}
    \item the functor $r$ is conservative
    \item the map $\id_{\categ E} \to rl$ is an equivalence.
\end{enumerate}
The first point is Lemma \ref{lemmaconservative} and the second point is Lemma \ref{lemmaequivalence}
\end{proof}

\begin{lemma}\label{lemmaconservative}
If the composite functor $i_! \circ t^\ast \circ q$ is conservative then the functor $r$ is conservative.
\end{lemma}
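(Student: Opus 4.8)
The plan is to exploit the hypothesis that $G := i_! \circ t^\ast \circ q$ is conservative in order to reduce the conservativity of $r$ to that of the composite $G \circ r$, and then to identify $G \circ r$ with $U^M$. The whole argument rests on the single identity $G \circ r \simeq U^M$, which I would extract from the coherences already recorded in the context.

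For the reduction step I would use the hypothesis directly. Because $G$ is conservative, the functor $r$ is conservative if and only if $G \circ r$ is: the left-to-right implication uses precisely that $G$ reflects equivalences, and the right-to-left implication is the elementary fact that a composite whose total functor is conservative has its right-hand factor conservative. Granting the hypothesis on $G$, it therefore suffices to prove that $G \circ r$ is conservative.

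Next I would establish the identity $G \circ r \simeq U^M$. The factorization of the adjunction $j_! \dashv j^\ast$ through $p \dashv q$ followed by $l \dashv r$ identifies the right adjoint of the composite, so $q \circ r \simeq j^\ast$. The commuting square of right adjoints gives $t^\ast \circ j^\ast \simeq i^\ast \circ U^M$, whence $t^\ast \circ q \circ r \simeq i^\ast \circ U^M$. Post-composing with $i_!$ and using that $i_! i^\ast \to \id_{\categ C}$ is an equivalence, so that $i_! \circ i^\ast \simeq \id_{\categ C}$, I obtain
$$
G \circ r = i_! \circ t^\ast \circ q \circ r \simeq i_! \circ i^\ast \circ U^M \simeq U^M .
$$
Since $U^M$ is conservative, being strongly monadic under the standing assumptions, the composite $G \circ r$ is conservative; combined with the reduction of the previous paragraph this yields that $r$ is conservative.

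I expect the only delicate point to be the bookkeeping in the third paragraph: one must check that the three equivalences $q \circ r \simeq j^\ast$, $t^\ast \circ j^\ast \simeq i^\ast \circ U^M$ and $i_! \circ i^\ast \simeq \id_{\categ C}$ paste coherently into the natural equivalence, so that $G \circ r$ genuinely recovers $U^M$ rather than merely being abstractly equivalent to it. Once that identity is secured, both the conservativity reduction licensed by the hypothesis on $G$ and the final invocation of the strong monadicity of $U^M$ are immediate, so I do not anticipate any further obstacle.
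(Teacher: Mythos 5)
Your proposal is correct and follows essentially the same route as the paper, whose entire proof is the observation that $U^M \simeq i_! \circ t^\ast \circ q \circ r$ (via $q \circ r \simeq j^\ast$, the square $t^\ast \circ j^\ast \simeq i^\ast \circ U^M$, and $i_! \circ i^\ast \simeq \id$) together with conservativity of $U^M$; you merely make the pasting explicit. One small remark: the direction you actually use (if $G \circ r$ is conservative then $r$ is) needs no hypothesis on $G$ at all, so the conservativity of $i_! \circ t^\ast \circ q$ plays no real role in this lemma --- a harmless redundancy your write-up shares with the paper's own phrasing.
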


\begin{proof}
One can notice that $U_M \simeq i_! \circ t^\ast \circ q \circ r$. Since $U_M$ and $i_! \circ t^\ast \circ q$ are conservative, then $r$ is also conservative. 
\end{proof}

\begin{lemma}\label{lemmaind}
The morphism $F \to j^\ast j_!(F)$ is an equivalence for any element $F \in \Ind_{\omega}(\categ B) \subseteq \categ{Psh}(\categ B)$.
\end{lemma}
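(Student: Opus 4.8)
The plan is to verify the statement on representable presheaves and then propagate it along filtered colimits. Write $\mathcal Y \colon \categ B \to \Psh(\categ B)$ for the Yoneda embedding. First I would treat the case $F = \mathcal Y(b)$ for $b \in \categ B$: since $j_!$ is the left Kan extension of $j$ along $\mathcal Y$, one has $j_! \mathcal Y(b) \simeq j(b)$, so $j^\ast j_!(\mathcal Y(b))$ is the presheaf $b' \mapsto \Mapp{\categ A}{j(b')}{j(b)}$. Because $j$ is fully faithful, the canonical map $\Mapp{\categ B}{b'}{b} \to \Mapp{\categ A}{j(b')}{j(b)}$ induced by $j$ is an equivalence, and this map is precisely the component of the unit $\eta_{\mathcal Y(b)} \colon \mathcal Y(b) \to j^\ast j_!(\mathcal Y(b))$. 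Hence the unit is an equivalence on all representables.

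Next I would record the relevant colimit-preservation properties. The functor $j_!$ is a left adjoint and therefore preserves all colimits; in particular it carries a filtered colimit in $\Psh(\categ B)$ to a filtered colimit in $\categ A$. The functor $j^\ast \colon \categ A \to \Psh(\categ B)$ preserves filtered colimits, which is exactly the content of the hypothesis that the objects of $\categ B$ are $\omega$-small in $\categ A$. Consequently the composite $j^\ast j_!$ preserves filtered colimits, while the identity functor of $\Psh(\categ B)$ preserves all colimits.

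Finally I would assemble the pieces. By the characterisation of $\Ind_\omega(\categ B)$ recalled above, any $F \in \Ind_\omega(\categ B)$ can be written as the colimit of a filtered diagram $i \mapsto F_i$ of representable presheaves. The unit $\eta \colon \id \to j^\ast j_!$ is a natural transformation between two functors that both preserve this filtered colimit, so by naturality $\eta_F$ is identified with $\colim_i \eta_{F_i}$. Each $\eta_{F_i}$ is an equivalence by the representable case, and a filtered colimit of equivalences is an equivalence; therefore $\eta_F \colon F \to j^\ast j_!(F)$ is an equivalence.

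I expect the only genuinely delicate step to be the commutation of $j^\ast$ with the filtered colimit: everything else is formal once that holds. That commutation rests entirely on the compactness hypothesis on $\categ B$, and it is what lets the otherwise standard ``agree on generators and preserve filtered colimits'' argument go through. A minor point to be careful about is that the identification $j^\ast j_!(F) \simeq \colim_i j^\ast j_!(F_i)$ must be made through the unit rather than some unrelated equivalence, which is guaranteed by the naturality of $\eta$.
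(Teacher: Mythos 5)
Your proposal is correct and follows essentially the same route as the paper's proof: verify the unit is an equivalence on representables using full faithfulness of $j : \categ B \hookrightarrow \categ A$, then extend to all of $\Ind_\omega(\categ B)$ using that $j_!$ (as a left adjoint) and $j^\ast$ (by the $\omega$-smallness hypothesis on $\categ B$) both preserve filtered colimits. The paper states this in two sentences; you have merely filled in the same argument with more detail, including the useful precaution about identifying the colimit comparison map with the unit via naturality.
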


\begin{proof}
This is an equivalence for any $F$ that belong to the image of the Yoneda embedding of $\categ B$ since $B$ is a full sucategory of $\categ A$. To conclude, it suffices to notice that both functors $j_!$ and $j^\ast$ preserves filtered colimits.
\end{proof}

\begin{lemma}\label{lemmaequivalence}
If the composite functor $i_! \circ t^\ast \circ q$ is conservative then the morphism $F \to j^\ast j_!(F)$ is an equivalence for any element $F \in \categ E \subseteq \categ{Psh}(\categ B)$.
\end{lemma}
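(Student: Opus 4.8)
The plan is to deduce the statement from the conservativity hypothesis by reducing everything to objects of $\Ind_\omega(\categ B)$, where Lemma \ref{lemmaind} already applies. Since $i_! \circ t^\ast \circ q$ is assumed conservative, it suffices to show that it sends the unit $\eta_F : F \to rl(F)$ of the adjunction $l \dashv r$ to an equivalence, for every $F \in \categ E$. So the whole proof is about understanding and then trivialising the map $i_! t^\ast q(\eta_F)$.

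First I would identify this map. Writing $j_! \dashv j^\ast$ as the composite of $p \dashv q$ and $l \dashv r$, so that $j_! \simeq l\,p$ and $j^\ast \simeq q\,r$, the usual compatibility of units with composition of adjunctions together with $p\,q \simeq \id_{\categ E}$ shows that $q(\eta_F)$ is, up to equivalence, the unit $u_{qF} : qF \to j^\ast j_! qF$ of $j_! \dashv j^\ast$. Using the commuting square of right adjoints $t^\ast j^\ast \simeq i^\ast U^M$ and the equivalence $i_! i^\ast \simeq \id$, the target $i_! t^\ast j^\ast j_! qF$ is canonically $U^M j_! qF = U^M l(F)$, and under this identification $i_! t^\ast q(\eta_F)$ becomes $i_! t^\ast(u_{qF})$. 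Thus it is enough to prove that $i_! t^\ast(u_X)$ is an equivalence for every $X$ in the essential image of $q$, hence for every $X \in \Psh_\Sigma(\categ B)$, which contains that image since $\categ E \subseteq \Psh_\Sigma(\categ B)$.

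The heart of the argument is then a sifted-colimit reduction. By the characterisation of $\Psh_\Sigma(\categ B)$ as geometric realisations of simplicial objects with values in $\Ind_\omega(\categ B)$ (\cite[Lemma 5.5.8.14]{Lurie17}), any such $X$ is a geometric realisation $X \simeq |G_\bullet|$ with $G_n \in \Ind_\omega(\categ B)$. I would check that both functors $i_! t^\ast$ and $i_! t^\ast j^\ast j_! \simeq U^M j_!$ preserve geometric realisations: the former because $t^\ast$ preserves colimits (it admits the right adjoint $t_\ast$) and $i_!$ is a left adjoint, the latter because $j_!$ preserves all colimits and $U^M$ preserves geometric realisations, being strongly monadic. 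Consequently the natural transformation $i_! t^\ast(u)$ relates two functors that commute with this realisation, so $i_! t^\ast(u_X)$ is the geometric realisation of the levelwise maps $i_! t^\ast(u_{G_n})$. Since each $G_n$ lies in $\Ind_\omega(\categ B)$, Lemma \ref{lemmaind} tells us that $u_{G_n} : G_n \to j^\ast j_! G_n$ is already an equivalence; hence each $i_! t^\ast(u_{G_n})$ is an equivalence and so is their realisation $i_! t^\ast(u_X)$. Applying this to $X = qF$ gives that $i_! t^\ast q(\eta_F)$ is an equivalence, and conservativity of $i_! t^\ast q$ then forces $\eta_F$ itself to be an equivalence.

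The step I expect to require the most care is the bookkeeping in the second paragraph: identifying $i_! t^\ast q(\eta_F)$ with $i_! t^\ast(u_{qF})$ through the factorisation of $j_! \dashv j^\ast$, and the formal claim that the natural transformation $i_! t^\ast(u)$ commutes with geometric realisations, i.e. that evaluating it at $|G_\bullet|$ recovers the realisation of its levelwise values. Both are routine once the colimit-preservation properties above are recorded, but they are where the argument could silently go wrong; after that, Lemma \ref{lemmaind} does the remaining work with no further input.
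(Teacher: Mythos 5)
Your proof is correct and relies on essentially the same ingredients as the paper's: the resolution of $F$ as a geometric realisation of a simplicial object with values in $\Ind_{\omega}(\categ B)$, Lemma \ref{lemmaind} applied levelwise, the facts that $i_!\circ t^\ast$ preserves colimits while $i_!\circ t^\ast\circ j^\ast \simeq U^M$ preserves geometric realisations, and the conservativity hypothesis. The only difference is organisational: the paper decomposes the unit $F \to j^\ast j_!(F)$ into three maps and applies conservativity to the middle interchange map (after checking its source and target lie in $\categ E$), whereas you apply conservativity directly to the unit of $l \dashv r$ after identifying it with the unit of $j_! \dashv j^\ast$ and observing that both relevant functors commute with realisations --- a harmless repackaging of the same argument.
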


\begin{proof}
Since $F$ belongs in particular to $\categ{Psh}_{\Sigma}(\categ B)$, it is the colimit in $\categ{Psh}(\categ B)$ of a simplicial object
$$
(G_n)_{n \in \Delta^{op}}
$$
where each $G_n$ belongs to $\Ind_{\omega}(\categ B)$.

Then, the morphism $F \to j^\ast j_!(F)$ decomposes as
$$
\begin{tikzcd}
 \varinjlim_{n \in \Delta^{op}} G_n
 \ar[r] \ar[d,"\simeq"]
 & \varinjlim_{n \in \Delta^{op}} j^\ast j_!(G_n)
 \ar[r]
 & j^\ast(\varinjlim_{n \in \Delta^{op}} j_!(G_n))
 \ar[r]
 & j^\ast j_!(\varinjlim_{n \in \Delta^{op}} G_n)
 \ar[d,"\simeq"]
 \\
 F
 \ar[rrr]
 &&& j^\ast j_! (F) .
\end{tikzcd}
$$
Let us show that the three following maps are equivalences:
\begin{enumerate}
    \item $\varinjlim_{n \in \Delta^{op}} G_n \to\varinjlim_{n \in \Delta^{op}} j^\ast j_!(G_n)$;
    \item $j^\ast(\varinjlim_{n \in \Delta^{op}} j_!(G_n))
\to j^\ast j_!(\varinjlim_{n \in \Delta^{op}} G_n)$;
    \item $\varinjlim_{n \in \Delta^{op}} j^\ast j_!(G_n)
 \to
 j^\ast(\varinjlim_{n \in \Delta^{op}} j_! (G_n))$.
\end{enumerate}
We can notice that since each $G_n$ is in $\Ind_{\omega}(\categ B)$, then the map $G_n \to j^\ast j_!(G_n)$ is an equivalence by Lemma \ref{lemmaind}. Thus, the map (1) is an equivalence. In particular, its target belongs to $\categ{E}$.
The map (2) is also an equivalence since $j_!$ is left adjoint.

It remains to show that the map (3)
 is an equivalence. Since this morphism belong to the essential image of $q$ (both the target and the source essentially belong to the full sucategory $\categ E$), it suffices to show that its image through $i_! \circ t^\ast$ is an equivalence (since $i_! \circ t^\ast \circ q$ is conservative). Let us consider the composite map
$$
\varinjlim_{n \in \Delta^{op}} i_!  t^\ast j^\ast j_!  (G_n)
\to 
i_!  t^\ast (\varinjlim_{n \in \Delta^{op}} j^\ast j_!(G_n))
\to i_!  t^\ast j^\ast( \varinjlim_{n \in \Delta^{op}} j_!(G_n))
$$
Its first part is an equivalence since $i_! \circ t^\ast$ preserves colimits. Besides, since $i_! \circ  t^\ast \circ j^\ast \simeq U_M$ preserves geometric realisations, then the composite map is also an equivalence. Hence, the second map
$$
i_!  t^\ast (\varinjlim_{n \in \Delta^{op}} j^\ast j_!(G_n))
\to i_!  t^\ast j^\ast( \varinjlim_{n \in \Delta^{op}} j_!(G_n))
$$
is also an equivalence. Hence, $F \to j^\ast j_! (F)$ is an equivalence.
\end{proof}


\section{Chain complexes and their algebras}

Let $R$ be a ring. The goal of this section is to describe two types of presentations of an $\infty$-categories of algebras in chain complexes.

We first recall some results about the link 
between combinatorial model categories and $\infty$-categories.
Then we describe a presentation of the $\infty$-category of chain complexes and of its product over a set $K$. This allows us to apply Theorem \ref{thmmain} to describe the cellular and the cartesian presentations of an $\infty$-category related to chain complexes through an $\omega$-accessible strongly monadic adjunction.
We conclude this section by describing how these presentations fit the context of algebras over a coloured operad in chain complexes.

\subsection{Combinatorial model categories}

Let $\categ C$ be a combinatorial model category and let $\categ C^c$ be its full subcategory of cofibrant objects.

\begin{notation}
For any marked simplicial $(X,\mathrm W)$, we will denote by $X[\mathrm{W}^{-1}]$ the underlying quasi-category of a fibrant replacement of $(X,\mathrm W)$ for the cartesian model structure on marked simplicial sets. This applies for instance to relative categories.
\end{notation}

\begin{lemma}\cite[Corollary 3.1.4.3.]{Lurie17}
The cartesian model structure on the category of marked simplicial sets is monoidal with respect to
the cartesian monoidal structure.
\end{lemma}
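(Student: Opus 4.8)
The plan is to read the assertion as the claim that $\mathsf{Set}^+_\Delta$, equipped with the cartesian product $\times$, is a \emph{monoidal} model category for the cartesian model structure. Concretely this means verifying the pushout--product axiom together with the unit axiom. Since in this structure the cofibrations are exactly the monomorphisms and every object is cofibrant (the initial marked simplicial set includes monomorphically into any other), the unit axiom is automatic, and the entire content is the pushout--product axiom: for cofibrations $i \colon A \to B$ and $j \colon C \to D$ the induced map
$$
i \,\square\, j \colon (B \times C) \amalg_{A \times C} (A \times D) \longrightarrow B \times D
$$
must be a cofibration, and a trivial cofibration as soon as $i$ or $j$ is one.

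The cofibration half is formal: marked simplicial sets form a presheaf category, hence a Grothendieck topos, and in any cartesian closed presheaf category the pushout--product of two monomorphisms is a monomorphism; alternatively one checks this by hand on underlying simplicial sets and on marked edges, dimension by dimension. The real work, and what I expect to be the main obstacle, is the trivial half. Rather than manipulate weak equivalences directly, I would route the argument through the class of \emph{marked anodyne} maps, the weak saturation of an explicit finite set of generators (inner horn inclusions, outer horn inclusions with the terminal edges marked, and a couple of maps forcing invertibility of a marked edge), whose right lifting property governs the fibrations of the structure. The key technical lemma to establish is:
$$
i \text{ a cofibration}, \qquad j \text{ marked anodyne} \;\Longrightarrow\; i \,\square\, j \text{ marked anodyne.}
$$
Because marked anodyne maps are closed under pushout, transfinite composition and retracts, it is enough to treat $i$ ranging over the generating cofibrations $\partial\Delta^n \hookrightarrow \Delta^n$ (flatly marked) together with $(\Delta^1)^\flat \to (\Delta^1)^\sharp$, and $j$ over the generating marked anodyne maps. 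Each resulting pushout--product is then a concrete finite inclusion of marked simplicial sets, which one filters by attaching nondegenerate simplices in increasing dimension and recognizes each attaching map as a pushout of a generating marked anodyne map. This dimension-by-dimension combinatorial bookkeeping is the delicate and laborious step.

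Granting the lemma, the monoidal structure follows by the standard adjunction between the pushout--product $i \,\square\, j$ and the pullback--power $\underline{\mathrm{Map}}(i,p)$ (the internal mapping-object construction). Dualizing the lemma along this adjunction shows that $\underline{\mathrm{Map}}(i,p)$ has the right lifting property against all marked anodyne maps for every cofibration $i$ and fibration $p$; combined with the characterization of the fibrations of the cartesian model structure, this is precisely the half of the enriched pushout--product axiom asserting that $i \,\square\, j$ is a trivial cofibration whenever $j$ is, and symmetry of $\times$ supplies the same conclusion when instead $i$ is the trivial cofibration. Together with the cofibration half and the automatic unit axiom, this establishes that the cartesian model structure is monoidal, as claimed.
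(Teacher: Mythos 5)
The paper offers no proof of this lemma at all: it is quoted verbatim from Lurie (HTT, Corollary 3.1.4.3), so your proposal must be measured against Lurie's own argument. Up to a point you follow it faithfully: the reduction to the pushout--product axiom, the formal disposal of the unit axiom and of the cofibration half, and the key technical lemma (a pushout--product of a cofibration with a marked anodyne map is marked anodyne, which is HTT Proposition 3.1.2.3, proved exactly by the reduction to generators you describe) are all correct and are all ingredients of Lurie's proof.

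The gap is in your final paragraph, where you pass from that lemma to the trivial-cofibration half of the axiom. You implicitly identify the trivial cofibrations of the cartesian model structure with the weak saturation of the marked anodyne generators, and the fibrations with the maps having the right lifting property against marked anodyne maps. Neither identification holds: the marked anodyne maps form a \emph{strictly} smaller class than the trivial cofibrations (for instance $\{0\}^\flat \to (\Delta^1)^\sharp$ is a cofibration and a cartesian equivalence, but it is not marked anodyne --- the generators are deliberately asymmetric), and lifting against marked anodyne maps detects only the fibrant objects (those of the form $C^\natural$ for $C$ a quasi-category) and fibrations between fibrant objects, not fibrations in general. Consequently your dualization argument proves only that $i \,\square\, j$ is a trivial cofibration when $j$ is \emph{marked anodyne}, not when $j$ is an arbitrary trivial cofibration, which is what the monoidal model structure axiom requires. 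The missing ingredient is the characterization of cartesian equivalences by mapping spaces: $f \colon X \to Y$ is a cartesian equivalence if and only if $\mathrm{Map}^\sharp(Y, C^\natural) \to \mathrm{Map}^\sharp(X, C^\natural)$ is a homotopy equivalence of Kan complexes for every quasi-category $C$. One then verifies the weak-equivalence half of the axiom by mapping the pushout--product into an arbitrary fibrant object $Z = C^\natural$, rewriting $\mathrm{Map}^\sharp(X \times Y, Z)$ through the internal hom, and using your key lemma to guarantee that the relevant internal homs are again fibrant and the relevant restriction maps are Kan fibrations, so that the strict pullback of mapping spaces computing maps out of the pushout is a homotopy pullback. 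This is precisely how Lurie deduces Corollary 3.1.4.3; without this step your argument establishes strictly less than the stated lemma.
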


\begin{lemma}
The functor
$$
\categ C^c[\mathrm W^{-1}] \to \categ C[\mathrm W^{-1}]
$$
is an equivalence of $\infty$-categories.
\end{lemma}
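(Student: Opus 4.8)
The plan is to exhibit an explicit homotopy inverse coming from functorial cofibrant replacement. Since $\categ C$ is combinatorial, the small object argument supplies functorial factorizations; applying it to the maps out of the initial object yields a functorial cofibrant replacement, that is, a functor $Q : \categ C \to \categ C$ whose image lies in $\categ C^c$, together with a natural weak equivalence $q : Q \Rightarrow \mathrm{id}_{\categ C}$ whose components $q_X : Q(X) \to X$ are acyclic fibrations. First I would check that both the inclusion $i : \categ C^c \hookrightarrow \categ C$ and the functor $Q$ are relative functors, i.e. preserve weak equivalences. For $i$ this is immediate; for $Q$, given a weak equivalence $f : X \to Y$, the naturality identity $q_Y \circ Q(f) = f \circ q_X$ combined with two-out-of-three (all of $q_X$, $q_Y$, $f$ being weak equivalences) forces $Q(f)$ to be one as well. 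Consequently both functors descend to the localizations, producing $\bar i : \categ C^c[\mathrm W^{-1}] \to \categ C[\mathrm W^{-1}]$ and $\bar Q : \categ C[\mathrm W^{-1}] \to \categ C^c[\mathrm W^{-1}]$.

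Next I would produce the two homotopies. The transformation $q$ is a natural weak equivalence $i \circ Q \Rightarrow \mathrm{id}_{\categ C}$; restricting it to cofibrant objects, where $Q(X) \to X$ is a weak equivalence between cofibrant objects and hence stays inside $\categ C^c$, yields a natural weak equivalence $Q \circ i \Rightarrow \mathrm{id}_{\categ C^c}$. The crucial point is that a natural weak equivalence between relative functors descends to an equivalence of localized functors. To make this precise I would encode such a transformation as a relative functor out of the product $(\categ A, \mathrm W) \times (\Delta^1, \text{marked edge})$ with values in the target relative category, noting that the product marking is sent to weak equivalences by two-out-of-three. Applying localization and using that it commutes with finite products — which is exactly the content of the monoidality of the cartesian model structure on marked simplicial sets recalled just above, since there every object is cofibrant and the product of quasi-categories is again fibrant — together with the fact that the marked interval localizes to a contractible $\infty$-groupoid, one obtains a homotopy between the two endpoint functors. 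This gives $\bar i \circ \bar Q \simeq \mathrm{id}$ and $\bar Q \circ \bar i \simeq \mathrm{id}$, so $\bar i$ is an equivalence with inverse $\bar Q$.

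The main obstacle is the middle step: turning a natural weak equivalence into a genuine natural equivalence after localization. Everything hinges on the compatibility of the localization functor with products, and the monoidality lemma recalled above is precisely the input that licenses the identification $(X \times \Delta^1_\sharp)[\mathrm W^{-1}] \simeq X[\mathrm W^{-1}] \times \ast$ underlying that argument; the remaining verifications (relativity of $Q$, preservation of $\categ C^c$ by the restricted transformation) are routine applications of two-out-of-three.
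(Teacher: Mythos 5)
Your proof is correct and is essentially the paper's own argument: a functorial cofibrant replacement (available since $\categ C$ is combinatorial) supplies the functor in the other direction, and the natural weak equivalence $q$ is converted into homotopies by taking the product with the marked interval, whose cylinder property is exactly what the monoidality of the cartesian model structure on marked simplicial sets provides. The only difference is packaging: the paper runs the cylinder argument before localizing, concluding that the map of marked simplicial sets is a homotopy equivalence, whereas you first descend $i$ and $Q$ to the localizations and then produce the natural equivalences there.
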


\begin{proof}
The functor of relative categories from $(\categ C^c, \mathrm W)$ to $(\categ C, \mathrm W)$ induces a morphism of marked simplicial sets
$$
f : (N(\categ C^c), \mathrm W) \to (N(\categ C), \mathrm W) .
$$
It suffices to show that this morphism $f$ is an equivalence of marked simplicial sets.

Since the model category $\categ C$ is combinatorial, it has a cofibrant replacement functor which gives another morphism of marked simplicial sets
$$
g : (N(\categ C), \mathrm W) \to (N(\categ C^c), \mathrm W) .
$$
Let $e$ be the marked simplicial set which consists in a marked arrow. Then for any marked simplicial set $X$, $X \times e$ is a cylinder object for $X$. Moreover, the cofibrant replacement functor gives us maps
$$
(N(\categ C), \mathrm W) \times e \to (N(\categ C), \mathrm W); \quad (N(\categ C^c), \mathrm W) \times e
\to (N(\categ C^c), \mathrm W)
$$
which relate on the one hand $fg$ to $\id$ and on the other hand $gf$ to $\id$. Hence $f$ is an equivalence.
\end{proof}

\begin{proposition}\cite[Proposition 1.3.4.22]{Lurie18}\label{propositioncombi}
The $\infty$-category $\categ C^c [\mathrm W^{-1}]$ is $\kappa$-presentable for some regular cardinal $\kappa$.
\end{proposition}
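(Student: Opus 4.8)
The plan is to reduce to the case of simplicial presheaves, where the underlying $\infty$-category is a presheaf $\infty$-category, and then to realise $\categ C^c[\mathrm W^{-1}]$ as the full subcategory of local objects for a small set of maps, to which Lemma \ref{lemmapresentatio} applies directly. The starting point is Dugger's presentation theorem for combinatorial model categories (see the appendix on combinatorial model categories in \cite{Lurie17}): there exist a small category $\categ D$ and a small set $S$ of morphisms in the projective model structure on simplicial presheaves $\Fun{\categ D^{\op}}{\sSet}$ such that $\categ C$ is Quillen equivalent to the left Bousfield localization of this model structure at $S$. Combined with the previous lemma, which identifies $\categ C^c[\mathrm W^{-1}]$ with $\categ C[\mathrm W^{-1}]$, this lets me compute the underlying $\infty$-category on the simplicial-presheaf side.

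Next I would record the two facts that make this useful. First, the underlying $\infty$-category of the projective model structure on $\Fun{\categ D^{\op}}{\sSet}$ is the presheaf $\infty$-category $\Psh(N(\categ D))$; in particular it is cocomplete and freely generated under colimits by $N(\categ D)$. Second, passing to a left Bousfield localization at $S$ corresponds, on underlying $\infty$-categories, to passing to the full subcategory of $S'$-local objects, where $S'$ is the image in $\Psh(N(\categ D))$ of the set $S$. Since $S$ is a small set and every object of a presentable $\infty$-category is $\kappa$-small once $\kappa$ is large enough, I may choose a single regular cardinal $\kappa$ for which all sources and targets of the maps in $S'$ are $\kappa$-small. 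Lemma \ref{lemmapresentatio} then shows that the $\infty$-category of $S'$-local objects is $\kappa$-presentable, the inclusion into $\Psh(N(\categ D))$ being a reflective localization that preserves $\kappa$-filtered colimits. Finally, a Quillen equivalence induces an equivalence of underlying $\infty$-categories, so $\categ C^c[\mathrm W^{-1}]$ is equivalent to this $\kappa$-presentable $\infty$-category, hence itself $\kappa$-presentable.

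The main obstacle is the dictionary between the model-categorical and the $\infty$-categorical localizations: one must verify that left Bousfield localization of a combinatorial model structure at a set of maps induces, on underlying $\infty$-categories, exactly the localization at the images of those maps, and that this localization agrees with the full subcategory of local objects in the sense of Lemma \ref{lemmapresentatio}. This amounts to knowing that the fibrant objects of the localized structure model the $S'$-local objects and that its weak equivalences are the $S'$-local equivalences, both of which are standard consequences of the theory of combinatorial model categories recalled in \cite{Lurie17} and \cite{Lurie18}. A secondary, purely bookkeeping point is the choice of $\kappa$: because $S$ is a set rather than a proper class, a uniform regular cardinal bounding the sizes of its sources and targets exists, which is precisely why the statement asserts $\kappa$-presentability for \emph{some} $\kappa$ rather than $\omega$-presentability.
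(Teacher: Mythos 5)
The paper offers no proof of this statement: it is quoted directly from \cite[Proposition 1.3.4.22]{Lurie18}, so there is no internal argument to compare yours against. Your proof is correct and is essentially a reconstruction of Lurie's proof of the cited result --- Dugger's presentation theorem, the identification of the underlying $\infty$-category of the projective model structure on $\Fun{\categ D^{\op}}{\sSet}$ with $\Psh(N(\categ D))$, and the dictionary between left Bousfield localization at a small set $S$ and the reflective subcategory of $S$-local objects --- with the pleasant feature that you delegate the final presentability step to the paper's own Lemma \ref{lemmapresentatio}. The two points you defer as standard (that Quillen equivalences induce equivalences of underlying $\infty$-categories, and that the model-categorical and $\infty$-categorical notions of local object agree for a simplicial combinatorial model category) are indeed standard, so the argument stands as a valid filling-in of the citation.
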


\begin{proposition}\cite[Proposition 1.3.4.24]{Lurie18}\label{propositionhomotopycolim}
Let us consider a functor $D : \categ I \to \categ C^c$ where $\categ I$ is a small category. Then, a cocone of $D$ is its homotopy colimit if and only if the induced cocone in $\categ C^c[\mathrm{W}^{-1}]$ is colimiting.
\end{proposition}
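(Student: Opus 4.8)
The plan is to realise both notions of colimit as two shadows of one and the same left-derived functor. Equip the diagram category $\Fun{\categ I}{\categ C}$ with the projective model structure, which exists because $\categ C$ is combinatorial and in which weak equivalences and fibrations are detected objectwise. The constant diagram functor $\delta : \categ C \to \Fun{\categ I}{\categ C}$ then preserves fibrations and trivial fibrations, so it is right Quillen and its left adjoint $\colim : \Fun{\categ I}{\categ C} \to \categ C$ is left Quillen. By definition the homotopy colimit of $D$ is $\colim(QD)$ for a projectively cofibrant replacement $QD \to D$ (note that objectwise cofibrancy of $D$ is not enough, so this replacement is genuinely needed), and a cocone $D \to \delta(X)$ is a homotopy colimit cocone precisely when the composite $\colim(QD) \to \colim(D) \to X$ is a weak equivalence in $\categ C$.

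First I would pass to underlying $\infty$-categories. A Quillen adjunction descends to an adjunction between the localisations, so $\colim \dashv \delta$ yields an adjunction relating $\Fun{\categ I}{\categ C}[\mathrm W^{-1}]$ to $\categ C[\mathrm W^{-1}]$ whose left adjoint is the left-derived functor $\mathbb L\colim$; by the previous paragraph this left-derived functor sends the class of $D$ to the class of its homotopy colimit. The two inputs I need here are: (a) the localisation of the projectively modelled diagram category is the functor $\infty$-category, i.e. there is an equivalence $\Fun{\categ I}{\categ C}[\mathrm W^{-1}] \simeq \Fun{\categ I}{\categ C[\mathrm W^{-1}]}$ compatible with the localisation functors; and (b) under this identification the right adjoint induced by $\delta$ is the $\infty$-categorical constant diagram functor. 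Granting (a) and (b), the left adjoint $\mathbb L\colim$ is forced to coincide with the $\infty$-categorical colimit functor $\colim_\infty$, the latter existing because $\categ C[\mathrm W^{-1}] \simeq \categ C^c[\mathrm W^{-1}]$ is presentable, hence cocomplete, by Proposition \ref{propositioncombi}.

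With this in hand the equivalence is formal. Write $\gamma$ for the localisation functor $\categ C \to \categ C[\mathrm W^{-1}]$. A cocone $D \to \delta(X)$ induces a cocone on $\gamma \circ D$ with cone point $\gamma(X)$, and this is a colimit cocone in $\categ C[\mathrm W^{-1}]$ exactly when the comparison map $\colim_\infty(\gamma D) \to \gamma(X)$ is an equivalence. By the previous step $\colim_\infty(\gamma D) \simeq \gamma(\mathrm{hocolim}\, D)$, and this comparison map is $\gamma$ applied to the canonical map $\mathrm{hocolim}\, D \to X$. Since a morphism of $\categ C$ is inverted by $\gamma$ if and only if it is a weak equivalence, the map $\colim_\infty(\gamma D) \to \gamma(X)$ is an equivalence precisely when $\mathrm{hocolim}\, D \to X$ is a weak equivalence, that is, precisely when the original cocone is a homotopy colimit.

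I expect the main obstacle to be input (a): identifying the localisation of the projectively modelled diagram category with the $\infty$-categorical functor category. This is the genuinely nontrivial point, and it is where combinatoriality of $\categ C$ is essential; it rests on the comparison theory between combinatorial model categories and presentable $\infty$-categories, through Lurie's relative-nerve constructions. Input (b), together with the statement that a Quillen adjunction descends to an adjunction of localisations carrying the left-derived functor to the left adjoint, is then comparatively formal, and the concluding reflection argument uses only that $\gamma$ both inverts and reflects weak equivalences.
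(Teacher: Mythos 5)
The paper does not actually prove this proposition: it is imported verbatim from Lurie's Higher Algebra (Proposition 1.3.4.24), so there is no in-paper argument to compare yours against, and I can only assess your proposal on its own terms. Its formal skeleton is sound: the projective model structure exists because $\categ C$ is combinatorial, $\mathrm{colim} \dashv \delta$ is a Quillen adjunction, the derived adjunction exists (this is the paper's Proposition \ref{propositionQuillenadj}), uniqueness of left adjoints identifies $\mathbb{L}\mathrm{colim}$ with the $\infty$-categorical colimit functor once the right adjoints are matched, and your concluding step correctly uses that the localisation functor inverts exactly the weak equivalences.

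The genuine problem is where you have placed the weight. Your input (a) is precisely the paper's Proposition \ref{propositioncatfunccolim}, i.e.\ Proposition 1.3.4.25 of Higher Algebra, and in Lurie's text that result is proved \emph{using} Proposition 1.3.4.24, the statement you are trying to prove: one needs to know that colimits in $\Fun{\categ I}{\categ C}^c[\mathrm W^{-1}]$ are computed by projective homotopy colimits (and pointwise in the target) in order to show that the comparison functor preserves colimits and is an equivalence. So your reduction runs in the wrong direction relative to the cited source, and all of the genuinely nontrivial content --- relating model-categorical homotopy colimits to $\infty$-categorical colimits --- has been displaced into (a), for which you offer no independent argument beyond a gesture at ``relative-nerve constructions.'' To make the proof non-circular you would need either an independent proof of (a) (Hinich's work on Dwyer--Kan localisation provides one), or, better, to argue as Lurie does: use Dugger's theorem to replace $\categ C$ by a Quillen-equivalent combinatorial \emph{simplicial} model category (homotopy colimits and the localisation $\categ C^c[\mathrm W^{-1}]$ are invariant under Quillen equivalence), and then invoke HTT Theorem 4.2.4.1, which is exactly this statement for simplicial model categories, where the localisation is modelled by the homotopy coherent nerve of the fibrant-cofibrant objects.
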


\begin{proposition}\cite[Proposition 1.3.4.25]{Lurie18}\label{propositioncatfunccolim}
For any small category $\categ I$, the category $\Fun{\categ I}{\categ C}$ admits a combinatorial model structure whose weak equivalences are objectwise weak equivalence.
Moreover, the functor
$$
 \Fun{\categ I}{\categ C^c}[\mathrm W^{-1}]
 \to \Fun{\categ I}{{\categ C^c}[\mathrm W^{-1}]}
$$
is an equivalence of $\infty$-categories.
\end{proposition}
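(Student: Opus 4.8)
The plan is to treat the two assertions separately: the existence of the model structure is routine transfer, while the comparison of $\infty$-categories is the substantial point. For the first assertion I would build the projective model structure by transfer along the adjunction
$$
\begin{tikzcd}
\prod_{i \in \Ob(\categ I)} \categ C
\ar[rr, shift left]
&&
\Fun{\categ I}{\categ C}
\ar[ll, shift left]
\end{tikzcd}
$$
whose right adjoint restricts a diagram to its values on objects and whose left adjoint is left Kan extension along the inclusion of the discrete category $\Ob(\categ I)$ into $\categ I$. The source is a set-indexed product of the combinatorial model category $\categ C$, hence again combinatorial with objectwise weak equivalences and fibrations. Declaring a morphism of diagrams to be a weak equivalence or fibration exactly when it is one objectwise, I would transfer this structure: because $\categ C$ is combinatorial its underlying category is locally presentable and the structure is cofibrantly generated, so the small object argument applies and the transferred structure is again combinatorial. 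The only genuine verification is the acyclicity of the transferred generating trivial cofibrations, which descends from the corresponding property in $\categ C$.

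For the second assertion I would first construct the comparison functor from the universal property of localisation. Postcomposing with the localisation $\gamma : \categ C^c \to \categ C^c[\mathrm W^{-1}]$ gives a functor $\Fun{\categ I}{\categ C^c} \to \Fun{\categ I}{\categ C^c[\mathrm W^{-1}]}$ that carries objectwise weak equivalences to equivalences in the target, since equivalences in a functor $\infty$-category are detected objectwise. It therefore factors uniquely through the localisation $\Fun{\categ I}{\categ C^c}[\mathrm W^{-1}]$, producing the functor in the statement.

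To prove this functor is an equivalence, the heart of the matter is rectification: every homotopy-coherent diagram $\categ I \to \categ C^c[\mathrm W^{-1}]$ should be equivalent to one arising from a strict functor $\categ I \to \categ C^c$, with matching mapping spaces on both sides. I would reduce to the simplicial case, where this is cleanest. By Dugger's presentation theorem, the combinatorial model category $\categ C$ admits a Quillen equivalence with a combinatorial simplicial model category $\categ C'$. For a simplicial combinatorial model category, the underlying $\infty$-category of the projective diagram category $\Fun{\categ I}{\categ C'}$ is, by Lurie's rectification results, equivalent to $\Fun{\categ I}{(\categ C')^\circ[\mathrm W^{-1}]}$, where $(\categ C')^\circ$ is the full subcategory of fibrant--cofibrant objects whose homotopy-coherent nerve models $\categ C'[\mathrm W^{-1}]$. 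Since a Quillen equivalence induces an equivalence of underlying $\infty$-categories and, being objectwise, prolongs to a Quillen equivalence of projective diagram categories, I can transport the conclusion back from $\categ C'$ to $\categ C$, using the equivalence $\categ C^c[\mathrm W^{-1}] \simeq \categ C[\mathrm W^{-1}]$ established above.

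The hard part will be exactly this rectification step: passing from strict diagrams valued in $\categ C^c$ to homotopy-coherent diagrams valued in $\categ C^c[\mathrm W^{-1}]$ without losing mapping-space information. Its resolution rests on two inputs --- Dugger's replacement of a combinatorial model category by a simplicial one, and Lurie's theorem that for simplicial model categories projective diagram categories compute functor $\infty$-categories --- so the real content is to check that these inputs are compatible with the localisation functors, i.e.\ that the square relating the four $\infty$-categories commutes up to equivalence.
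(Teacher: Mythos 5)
The paper does not actually prove this proposition --- it is imported verbatim from Lurie's Higher Algebra \cite[Proposition 1.3.4.25]{Lurie18}, with only the citation given. Your outline is correct and essentially reconstructs the argument of that cited reference: the projective model structure obtained by transfer along restriction to objects, and the rectification step obtained by combining Dugger's simplicial presentation of a combinatorial model category with Lurie's rectification theorem for simplicial model categories \cite[Proposition 4.2.4.4]{Lurie17}, transported along the induced Quillen equivalence of projective diagram categories.
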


\begin{lemma}\label{lemmapreservefiltered}
Let $\kappa$ be a regular cardinal and let $F : \categ C \to \categ D$ be a functor between combinatorial model categories that preserves weak equivalences. Let us suppose weak equivalences in $\categ C$ and $\categ D$ are stable through $\kappa$-filtered colimits and that $F$ preserves $\kappa$-filtered colimits. Then the induced functor
$$
\categ C[\mathrm{W}^{-1}] \to \categ D[\mathrm{W}^{-1}]
$$
preserves $\kappa$-filtered colimits.
\end{lemma}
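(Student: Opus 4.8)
The plan is to reduce the statement to the comparison between strict and homotopy colimits in a model category, exploiting that a $\kappa$-filtered colimit is homotopically well behaved precisely when weak equivalences are stable under such colimits. Since $F$ preserves weak equivalences, it induces a functor $G : \categ C[\mathrm W^{-1}] \to \categ D[\mathrm W^{-1}]$ carrying the class of an object $X$ to the class of $F(X)$, and I must show that $G$ preserves $\kappa$-filtered colimits.

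First I would reduce the shape of the index. Given a $\kappa$-filtered $\infty$-category $\categ I$ and a diagram $p : \categ I \to \categ C[\mathrm W^{-1}]$, a cofinality argument (see \cite[Section 5.3.1]{Lurie17}) provides a cofinal map from the nerve of a $\kappa$-filtered partially ordered set; since cofinal maps induce equivalences on colimits and $G$ commutes with restriction along them, it suffices to treat diagrams indexed by a small $\kappa$-filtered category $\categ I$. For such an $\categ I$, Proposition \ref{propositioncatfunccolim} together with the equivalence $\categ C^c[\mathrm W^{-1}] \simeq \categ C[\mathrm W^{-1}]$ lets me rectify $p$ to a strict functor $D : \categ I \to \categ C^c$; replacing $D$ by a projectively cofibrant model, through an objectwise weak equivalence that does not change the object of $\Fun{\categ I}{\categ C^c}[\mathrm W^{-1}]$, I may assume $D$ is projectively cofibrant.

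The heart of the argument is the identification of $\infty$-categorical colimits with strict $1$-categorical colimits. On the one hand, the colimit of $p$ in $\categ C[\mathrm W^{-1}]$ is the homotopy colimit of $D$ by Proposition \ref{propositionhomotopycolim}; on the other hand, because $\categ I$ is $\kappa$-filtered and weak equivalences in $\categ C$ are stable under $\kappa$-filtered colimits, the canonical map $\mathrm{hocolim}\, D \to \colim_{\categ I} D$ is a $\kappa$-filtered colimit of objectwise weak equivalences and hence itself a weak equivalence, so $\colim_{\categ I} D$ represents the colimit of $p$. The same reasoning carried out in $\categ D$ shows that, since $F \circ D$ models $G \circ p$ and weak equivalences in $\categ D$ are stable under $\kappa$-filtered colimits, the strict colimit $\colim_{\categ I}(F \circ D)$ represents the colimit of $G \circ p$, even though $F \circ D$ need not be cofibrant.

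It then remains to compare the two. Since $G$ is induced by $F$, the object $G(\colim p)$ is represented by $F(\colim_{\categ I} D)$; and because $F$ preserves $\kappa$-filtered colimits one has $F(\colim_{\categ I} D) \simeq \colim_{\categ I}(F \circ D)$, which is exactly the representative of $\colim(G \circ p)$ found above. A verification that this identification is the canonical comparison map then concludes. The main obstacle is precisely this twofold passage between homotopy and strict colimits: it is where the stability hypothesis is essential, for it is what makes the strict $1$-categorical colimit compute the $\infty$-categorical one and thereby transports the elementary colimit-preservation of $F$ to the localizations; by contrast, the cofinality and rectification reductions are routine but must be carried out to legitimately replace the $\infty$-categorical diagram by a strict one indexed by a poset.
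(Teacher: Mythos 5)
Your proof is correct and follows essentially the same route as the paper's: reduce to $\kappa$-filtered posets by cofinality, rectify the diagram using Propositions \ref{propositionhomotopycolim} and \ref{propositioncatfunccolim}, and then use the stability of weak equivalences under $\kappa$-filtered colimits to identify homotopy colimits with strict ones, to which the colimit-preservation of $F$ applies. Your write-up merely spells out in more detail (projectively cofibrant replacement, the non-cofibrancy of $F \circ D$) what the paper's terse final sentence leaves implicit.
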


\begin{proof}
For any $\kappa$-filtered $\infty$-category $\categ I$, there exists a $\kappa$-filtered poset $\categ P$ and a cofinal map from this poset to $\categ I$ (\cite[Proposition 5.3.1.18]{Lurie17}). Thus, using Proposition \ref{propositionhomotopycolim} and Proposition \ref{propositioncatfunccolim}, it suffices to prove that $F$ preserves homotopy colimits indexed by $\kappa$-filtered posets. To conclude, such homotopy colimits are given by colimits in these two model categories.
\end{proof}

\begin{proposition}\label{propositionQuillenadj}\cite{MazelGee16}
Let us consider a Quillen adjunction
$$
\begin{tikzcd}
 \categ C
 \ar[rr, shift left,"L"]
 && \categ D \ar[ll, shift left,"R"]
\end{tikzcd}
$$
between combinatorial model categories. Let $Q$ be a cofibrant replacement functor for $\categ C$ and let $F$ be a fibrant replacement functor for $\categ D$. Then the functors derived from $LQ$ and $RF$ that relate $\categ C[\mathrm{W^{-1}}]$ and $\categ D[\mathrm{W^{-1}}]$ form a pair of a left and a right adjoint functors.
\end{proposition}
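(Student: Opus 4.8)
The plan is to first check that the derived functors descend to the localisations, then build the adjunction by comparing mapping spaces computed via framings, leaving the homotopy-coherence as the real work.

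First I would verify that $LQ$ and $RF$ send weak equivalences to weak equivalences, so that they induce functors $\bar L : \categ C[\mathrm W^{-1}] \to \categ D[\mathrm W^{-1}]$ and $\bar R : \categ D[\mathrm W^{-1}] \to \categ C[\mathrm W^{-1}]$. Indeed $Q$ takes values in cofibrant objects and preserves weak equivalences, while $L$ preserves weak equivalences between cofibrant objects by Ken Brown's lemma (being left Quillen it preserves trivial cofibrations between cofibrant objects); hence $LQ$ preserves all weak equivalences, and dually for $RF$. Through the equivalence $\categ C^c[\mathrm W^{-1}] \simeq \categ C[\mathrm W^{-1}]$ of the preceding lemma and its dual for fibrant objects, $\bar L$ is modelled on cofibrant objects by $L$ itself and $\bar R$ on fibrant objects by $R$.

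Next I would produce, for $X$ cofibrant and $Y$ fibrant, a natural equivalence of mapping spaces
$$
\Mapp{\categ D[\mathrm W^{-1}]}{\bar L X}{Y} \simeq \Mapp{\categ C[\mathrm W^{-1}]}{X}{\bar R Y} .
$$
The tool here is cosimplicial framings: choose a cosimplicial resolution $X_\bullet$ of the cofibrant object $X$; since $L$ is left Quillen it carries $X_\bullet$ to a cosimplicial resolution of $L X$, so that $\Mapp{\categ D[\mathrm W^{-1}]}{L X}{Y}$ is computed by the simplicial set $\categ D(L X_\bullet, Y)$. By the point-set adjunction this simplicial set is isomorphic to $\categ C(X_\bullet, R Y)$, and since $R Y$ is fibrant and $X_\bullet$ frames $X$, the latter computes $\Mapp{\categ C[\mathrm W^{-1}]}{X}{R Y}$. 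Naturality in $X$ and $Y$ follows from the functoriality of the framings.

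Finally I would upgrade this levelwise comparison to a genuine adjunction. Transporting the point-set unit $\eta : \id \to RL$ and counit $\epsilon : LR \to \id$ through $Q$ and the fibrant replacement gives a candidate derived unit $\id \to \bar R \bar L$ and counit $\bar L \bar R \to \id$, and the mapping-space equivalence above identifies the composites appearing in the criterion that characterises $\bar R$ as a right adjoint (a unit inducing equivalences on all mapping spaces suffices). The main obstacle is precisely this last step: promoting the equivalence of mapping spaces to a coherently natural datum over all of $\categ C[\mathrm W^{-1}]$ and $\categ D[\mathrm W^{-1}]$ — not merely on homotopy categories — and assembling the framings into a single homotopy-coherent structure. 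This coherence is the technical heart of the statement and is exactly what is carried out in \cite{MazelGee16}; I would either invoke that construction directly, or reduce via Dugger's presentation theorem to simplicial model categories, where the adjunction on mapping spaces is strict and the coherence becomes automatic.
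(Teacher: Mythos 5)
Your proposal is correct and ultimately takes the same route as the paper: the paper gives no argument of its own for this proposition, citing \cite{MazelGee16} for the whole statement, and your sketch (derived functors via Ken Brown's lemma, mapping-space comparison via cosimplicial framings, then the unit criterion for adjunctions of $\infty$-categories) is a faithful outline of what that reference actually does, with the homotopy-coherence correctly identified as the part one must take from \cite{MazelGee16}. Since both you and the paper defer the technical heart to the same citation, there is nothing to bridge.
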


\begin{lemma}\label{lemmalocalisationproduct}
Let $(X_i)_{i \in I}$ be a collection of marked simplicial sets. Then the functor
$$
(\prod_{i \in I} X_i) [\mathrm{W}^{-1}] \to \prod_{i \in I} ( X_i[\mathrm{W}^{-1}])
$$
is an equivalence.
\end{lemma}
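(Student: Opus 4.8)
The plan is to show that a product of fibrant replacements is again a fibrant replacement of the product; then the localization of the product is computed by the underlying quasi-category of the product of the localizations. I would first record two preliminary facts about the cartesian model structure on marked simplicial sets. Since fibrations are characterized by a right lifting property they are stable under products, so a product of fibrant objects is fibrant; concretely, a product of quasi-categories is a quasi-category and an edge of a product is an equivalence exactly when each component is, so the product of the marked simplicial sets $X_i[\mathrm{W}^{-1}]$ (each marked by its equivalences) is fibrant with underlying quasi-category $\prod_{i} (X_i[\mathrm{W}^{-1}])$. Second, products of marked simplicial sets are computed levelwise on underlying simplicial sets and on markings, so the passage to the underlying quasi-category of a fibrant object commutes with products. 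Together these identify the target $\prod_i (X_i[\mathrm{W}^{-1}])$ with the underlying quasi-category of the fibrant object $\prod_i \widehat{X_i}$, where $\widehat{X_i}$ is a fibrant replacement of $X_i$.

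Choosing the fibrant replacements $\rho_i : X_i \to \widehat{X_i}$ to be trivial cofibrations, the comparison map of the statement is exactly the one induced on localizations by the product map $\prod_i \rho_i : \prod_i X_i \to \prod_i \widehat{X_i}$. As every object is cofibrant and cofibrations are the monomorphisms, $\prod_i \rho_i$ is again a cofibration. Hence it suffices to prove that $\prod_i \rho_i$ is a weak equivalence: then $\prod_i \widehat{X_i}$ is a fibrant replacement of $\prod_i X_i$, its underlying quasi-category computes $(\prod_i X_i)[\mathrm{W}^{-1}]$, and the comparison map is an equivalence. One may phrase the same reduction through Proposition \ref{propositionQuillenadj}: the diagonal functor into the (combinatorial) product model structure is left Quillen, so the right derived product computes the product in the localization and is modelled by $\prod_i \widehat{X_i}$; the remaining content is precisely that here the underived product already computes the derived one.

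The heart of the argument, and the main obstacle, is therefore to show that cartesian weak equivalences are stable under arbitrary products, i.e. that fibrant replacement commutes with products up to weak equivalence. I expect this to require a genuine model-categorical input rather than a formal one, since the naive test on derived mapping spaces is circular: the identity $\mathbb{R}\mathrm{Map}(A, \prod_i X_i) \simeq \prod_i \mathbb{R}\mathrm{Map}(A, X_i)$ is equivalent to the very statement to be proved, the subtlety being that the product $\prod_i X_i$ of the non-fibrant objects need not be fibrant while fibrant replacement does not commute with infinite products on the nose. I would instead reduce to the characterization of cartesian equivalences between fibrant objects as the maps whose underlying map of quasi-categories is a categorical equivalence, and use that a product of categorical equivalences of quasi-categories is again one: homotopy inverses $g_i$ and the homotopies $H_i : Q_i \times J \to Q_i$ (with $J = N(\{0 \cong 1\})$ the walking isomorphism) witnessing them assemble componentwise, since $J$ is fixed and $(\prod_i Q_i) \times J \simeq \prod_i (Q_i \times J)$. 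The delicate remaining gap is that the source $\prod_i X_i$ is not fibrant; here I would either construct a product-compatible fibrant replacement out of the generating marked anodyne maps (whose domains and codomains are finite) or invoke directly the stability of cartesian equivalences under products.

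Combining the reduction with this stability statement, $\prod_i \rho_i$ is a weak equivalence into the fibrant object $\prod_i \widehat{X_i}$, so the latter is a fibrant replacement of $\prod_i X_i$, and the canonical comparison map $(\prod_i X_i)[\mathrm{W}^{-1}] \to \prod_i (X_i[\mathrm{W}^{-1}])$ is an equivalence of quasi-categories, as claimed.
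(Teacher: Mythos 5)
Your reduction---products of fibrant marked simplicial sets are fibrant, the underlying quasi-category commutes with products, hence everything rests on the product of the fibrant-replacement maps $\prod_i \rho_i \colon \prod_i X_i \to \prod_i \widehat{X_i}$ being a cartesian weak equivalence---is correct, and it is exactly the paper's first step. But what remains \emph{is} the entire content of the lemma, and you do not prove it. Of your two proposed completions, ``invoke directly the stability of cartesian equivalences under products'' is circular (that stability is the statement), and the sketch via generating marked anodyne maps is not an argument: a product of (marked) anodyne maps, or of trivial cofibrations, need not be a trivial cofibration, which is precisely why nothing formal comes out of the small object argument here. Your $J$-homotopy argument treats only products of equivalences \emph{between fibrant objects}, i.e.\ exactly not the case at hand. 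Worse, the missing claim is false in the stated generality, so no argument can close the gap. Take $X_n = (Z_n)^\sharp$ for $n \geq 1$, where $Z_n$ is the spine of $\Delta^n$ (vertices $0,\dots,n$, edges $i \to i+1$) with all edges marked. Since $\mathrm{Hom}(K^\sharp \times (\Delta^m)^\sharp, Z^\natural) = \mathrm{Hom}(K \times \Delta^m, Z^{\simeq})$ and every Kan complex is the core of a quasi-category, a map $K^\sharp \to L^\sharp$ is a cartesian equivalence precisely when $K \to L$ is a Kan--Quillen weak equivalence. Hence each $X_n \to \Delta^0$ is a cartesian equivalence and $\prod_n (X_n[\mathrm{W}^{-1}])$ is contractible. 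But $\prod_n X_n = (\prod_n Z_n)^\sharp$, and $\prod_n Z_n$ is not connected: a zigzag of $k$ edges in the product moves the $n$-th coordinate by graph distance at most $k$, while the distance from $0$ to $n$ in $Z_n$ is $n$, so the vertices $(0,0,0,\dots)$ and $(1,2,3,\dots)$ lie in different components. Therefore $(\prod_n X_n)[\mathrm{W}^{-1}]$ is not contractible and the comparison functor is not an equivalence.

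For comparison, the paper's proof makes the same reduction, then settles the finite case by monoidality of the cartesian model structure (every object is cofibrant, so $- \times C$ preserves weak equivalences by the pushout-product axiom and Ken Brown's lemma)---this is the ingredient your write-up lacks even for finite $I$, where the statement \emph{is} true---and handles infinite $I$ by appealing to closure of weak equivalences under transfinite composition. That last step stumbles on the same obstruction you met: replacing one coordinate at a time yields a transfinite composition whose colimit consists of tuples eventually lying in the images of the $\rho_i$, which is not the full product, and the counterexample above shows the infinite case genuinely fails for arbitrary marked simplicial sets. The lemma is only ever applied to products of nerves of model categories marked by their weak equivalences; in that setting it does hold, but for a different reason: mapping spaces in the localisation of a model category admit componentwise models (simplicial resolutions, or hammocks of a fixed bounded shape), which commute with arbitrary products. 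A correct completion of your argument should add such a hypothesis and use that input, rather than seek a purely formal proof.
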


\begin{proof}
It amounts to show that the morphism of marked simplicial sets
$$
\prod_{i \in I} X_i \to
\prod_{i \in I} ( X_i[\mathrm{W}^{-1}])
$$
is a weak equivalence.
If $I$ is finite, this follows from the fact that the cartesian model structure of marked simplical sets is monoidal. In the general case, it suffices to notice that the transfinite composition of weak equivalences in marked simplicial sets is a weak equivalence.
\end{proof}

\subsection{Chain complexes}

\begin{definition}
Let $\categ{Chain}_R$ be the category of chain complexes of  $R$-modules. For any $m \in \mathbb Z$, we denote $S^m$ the chain complex which consists in $R$ in degree $m$ and zero on any other degree. Moreover, $D^m$ is the chain complex which consists in $R$ in degree $m$ and $m-1$ with differential given by the identity of $R$ and which is zero in any other degree.
\end{definition}

\begin{proposition}\cite{Hovey99}
The category $\categ{Chain}_R$ of chain complexes of  $R$-modules admits a combinatorial model structure whose
\begin{itemize}
    \itemt fibrations are degreewise surjections;
    \itemt weak equivalences are quasi-isomorphisms;
    \itemt generating cofibrations are given by maps
    $$
        S^n  \to D^{n+1}, \ n \in \mathbb Z;
    $$
    \itemt generating acyclic cofibrations are given by maps
    $$
    0 \to D^n, n \in \mathbb Z;
    $$
    \itemt cofibrations are degreewise injections whose cokernel is degreewise projective.
\end{itemize}
The resulting $\infty$-category of chain complexes $\categ{Chain}_R[\mathrm W^{-1}]$ is denoted $\categ{Ch}_R$.
\end{proposition}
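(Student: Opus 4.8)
The plan is to produce the model structure by means of the recognition theorem for cofibrantly generated model categories (for instance Kan's recognition theorem as in \cite{Hovey99}), applied to the set of generating cofibrations $I = \{S^n \to D^{n+1} \mid n \in \mathbb Z\}$, the set of generating acyclic cofibrations $J = \{0 \to D^n \mid n \in \mathbb Z\}$, and the class $\mathrm W$ of quasi-isomorphisms. First I would record the formal prerequisites: the category $\categ{Chain}_R$ is complete and cocomplete, with limits and colimits computed degreewise, and it is locally presentable, being a Grothendieck abelian category; moreover $\mathrm W$ satisfies two-out-of-three and is closed under retracts, as both properties hold for isomorphisms on homology. Finally, the domains of the maps in $I$ and $J$ are finitely generated and hence small, so both sets permit the small object argument.

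The substantive part of the argument consists of two lifting computations. First I would show that a map $f : X \to Y$ lies in $J\text{-inj}$ if and only if it is a degreewise surjection: a map $D^n \to Y$ is the datum of an element $y \in Y_n$, and a solution to the lifting problem against $0 \to D^n$ is exactly a preimage of $y$ in $X_n$. Second, and more delicately, I would identify $I\text{-inj}$ with the class of degreewise surjective quasi-isomorphisms: unravelling the lifting problem against $S^n \to D^{n+1}$, a map $S^n \to X$ is an $n$-cycle $a$ of $X$ and a map $D^{n+1} \to Y$ is an element $b \in Y_{n+1}$, the compatibility being $f(a) = d b$; a lift is an element $c \in X_{n+1}$ with $dc = a$ and $f(c) = b$, and a direct chase on cycles and boundaries shows that such lifts always exist precisely when $f$ is both surjective in each degree and an isomorphism on homology. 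In particular $I\text{-inj} \subseteq \mathrm W \cap J\text{-inj}$, and conversely every degreewise surjective quasi-isomorphism lies in $I\text{-inj}$.

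The main obstacle is the acyclicity condition, namely that every relative $J$-cell complex is simultaneously an $I$-cofibration and a quasi-isomorphism. The key point is that each $D^n$ is contractible, so a pushout of $0 \to D^n$ along any map $0 \to A$ is the inclusion $A \to A \oplus D^n$, a degreewise-split monomorphism with contractible cokernel, hence a quasi-isomorphism; since homology commutes with the filtered colimits arising in transfinite composition, every relative $J$-cell complex is a quasi-isomorphism, and it is an $I$-cofibration because $0 \to D^n$ has the left lifting property against every degreewise surjective quasi-isomorphism (as $D^n$ is cofibrant). With these facts the hypotheses of the recognition theorem are met --- the remaining compatibility $\mathrm W \cap J\text{-inj} \subseteq I\text{-inj}$ being exactly the converse direction of the second computation above --- so the cofibrantly generated model structure exists; local presentability of $\categ{Chain}_R$ then makes it combinatorial.

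Finally, to obtain the stated description of the cofibrations I would use the standard retract argument: every cofibration is a retract of a relative $I$-cell complex, every relative $I$-cell complex is a degreewise-split injection whose cokernel is built degreewise from copies of $R$ and hence degreewise projective, and both being a degreewise injection and having a degreewise projective cokernel are preserved under retracts (a direct summand of a projective module being projective). The identification of $\categ{Ch}_R$ as the localisation $\categ{Chain}_R[\mathrm W^{-1}]$ is then just a definition.
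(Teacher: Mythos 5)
The paper offers no proof of this proposition --- it is quoted from \cite{Hovey99} --- so your proposal is measured against the standard proof in that reference, which it reconstructs correctly as far as the existence of the model structure goes. The recognition-theorem skeleton is right: $J\text{-inj}$ is exactly the class of degreewise surjections, $I\text{-inj}$ is exactly the class of surjective quasi-isomorphisms (your cycle-and-boundary chase is the content of Hovey's Proposition 2.3.5), the domains of $I$ and $J$ are finitely presented hence small, and acyclicity of relative $J$-cell complexes follows, as you say, from contractibility of $D^n$ together with the commutation of homology with filtered colimits. With $I\text{-inj} = \mathrm W \cap J\text{-inj}$ the recognition theorem applies, and local presentability makes the structure combinatorial. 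One cosmetic point: justifying $J \subseteq I\text{-cof}$ by ``$D^n$ is cofibrant'' is circular before the model structure exists; the honest argument is that $0 \to D^n$ has the left lifting property against every degreewise surjection, since a lift is just a preimage of a single element.

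The genuine gap is in your last paragraph. Your retract argument proves only one inclusion: every cofibration is a degreewise (indeed degreewise split) injection with degreewise projective cokernel. It does not prove the converse, which is part of the stated description --- and the converse is in fact \emph{false} for unbounded complexes. Over $R = \mathbb Z/4$, let $A$ be the complex with $A_n = \mathbb Z/4$ for all $n$ and every differential equal to multiplication by $2$. Then $0 \to A$ is a degreewise injection with degreewise free cokernel, but it is not a cofibration: $A$ is acyclic, and a cofibrant acyclic complex must be chain contractible (all objects are fibrant, and a weak equivalence between fibrant--cofibrant objects is a homotopy equivalence), whereas a contracting homotopy $h$ for $A$ would satisfy $2h_n + 2h_{n-1} = \mathrm{id}$, impossible in $\mathbb Z/4$. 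The correct characterization, which is what \cite{Hovey99} actually proves (Proposition 2.3.9 and the surrounding discussion), is that cofibrations are degreewise split injections whose cokernel is a \emph{cofibrant} complex, and cofibrancy is strictly stronger than degreewise projectivity in the unbounded setting (the two agree for bounded-below complexes, which is presumably the source of the slip in the statement itself). So the last bullet of the proposition cannot be proved as stated; the most your argument can yield --- and the most that is true --- is the one inclusion you establish. This does not propagate into the rest of the paper, which only ever uses the generating (acyclic) cofibrations, the fibrations and the weak equivalences, but your proof should either prove the corrected characterization or note explicitly that only the necessity direction holds.
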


\begin{proposition}\cite[Proposition 1.3.4.5]{Lurie18}
Let $\categ{Chain}^{\categ{dg}}_R$ be the differential graded category of chain complexes of $R$-modules and let $\categ{Chain}^{\categ{dg},c}_R$ be the full subcategory spanned by objects that are cofibrant for the model structure described above.
The functor
$$
\categ{Chain}^{c}_R \to \mathrm{N}_{\mathrm{dg}}(\categ{Chain}^{\categ{dg},c}_R)
$$
induces an equivalence of $\infty$-categories
$$
\categ{Ch}_R \simeq \categ{Chain}^{c}_R[\mathrm{W}^{-1}] \to \mathrm{N}_{\mathrm{dg}}(\categ{Chain}^{\categ{dg},c}_R) .
$$
\end{proposition}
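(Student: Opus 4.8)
The plan is to prove the equivalence by checking that the induced functor on localisations is essentially surjective and fully faithful, computing both mapping spaces through a single intermediate simplicial enrichment obtained from the Dold--Kan correspondence; this follows the dg-nerve machinery of \cite{Lurie18}.

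First I would check that the canonical functor $\categ{Chain}^c_R \to \mathrm{N}_{\mathrm{dg}}(\categ{Chain}^{\categ{dg},c}_R)$, which is the identity on objects and sends a chain map to the corresponding $1$-simplex of the dg-nerve, carries quasi-isomorphisms to equivalences. Indeed a quasi-isomorphism between cofibrant (hence $K$-projective) chain complexes is a chain homotopy equivalence, so it becomes invertible in $H_0(\categ{Chain}^{\categ{dg},c}_R)$, which is exactly the homotopy category of $\mathrm{N}_{\mathrm{dg}}(\categ{Chain}^{\categ{dg},c}_R)$; thus it is an equivalence there. By the universal property of the localisation this yields the desired functor $\categ{Ch}_R \simeq \categ{Chain}^c_R[\mathrm{W}^{-1}] \to \mathrm{N}_{\mathrm{dg}}(\categ{Chain}^{\categ{dg},c}_R)$. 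Essential surjectivity is then immediate: the dg-nerve has the cofibrant chain complexes as its objects, and each of them is visibly hit.

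The heart of the matter is full faithfulness, that is, identifying the mapping spaces. Here I would use that $\categ{Chain}_R$ is a dg-model category: it is tensored, cotensored and enriched over itself and satisfies the pushout-product axiom, so for $X$ cofibrant and $Y$ arbitrary (all objects being fibrant) the internal hom complex $\HOM(X,Y)$ computes the derived mapping complex, with $H_n\HOM(X,Y)\cong \pi_n\Map_{\categ{Ch}_R}(X,Y)$ for $n\geq 0$. On the dg-nerve side, the mapping space between $X$ and $Y$ is, by Lurie's comparison of the dg-nerve with a homotopy-coherent nerve, equivalent to the Dold--Kan space $\mathrm{DK}(\tau_{\geq 0}\HOM(X,Y))$ attached to the connective truncation. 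Concretely, I would apply the Dold--Kan procedure to the connective truncations of all hom complexes to obtain a simplicial category $\categ{Chain}_R^{\Delta}$ with the same objects, then argue on the one hand that its homotopy-coherent nerve on cofibrant objects reproduces $\mathrm{N}_{\mathrm{dg}}(\categ{Chain}^{\categ{dg},c}_R)$, and on the other hand that $\categ{Chain}_R^{\Delta}$ underlies a simplicial model category whose (cofibrant-fibrant $=$ cofibrant) objects localise to $\categ{Ch}_R$ by the classical Dwyer--Kan computation. Matching the two identifications along the canonical functor gives the equivalence on mapping spaces.

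The main obstacle I anticipate is precisely this matching: one must verify that the simplicial enrichment produced by Dold--Kan from the dg-hom complexes is compatible with the model structure well enough to make $\categ{Chain}_R^{\Delta}$ a simplicial model category (checking the simplicial analogue of the pushout-product axiom, which is delicate since Dold--Kan is only lax monoidal), and that under this enrichment the two a priori different computations of the derived mapping space --- one via cosimplicial/simplicial framings in the Dwyer--Kan localisation, the other via the connective hom complex --- agree naturally in $X$ and $Y$. The $K$-projectivity of cofibrant objects, which guarantees that $\HOM(X,-)$ already preserves quasi-isomorphisms and so requires no further resolution, is what makes this comparison go through, and I would isolate it as the key lemma.
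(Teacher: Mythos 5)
The paper offers no proof of this proposition at all --- it is quoted directly from \cite[Proposition 1.3.4.5]{Lurie18} --- so the only comparison available is with Lurie's own argument, and your sketch is essentially that argument: identify the dg-nerve with the homotopy coherent nerve of the simplicial category obtained by applying $\mathrm{DK}\circ\tau_{\geq 0}$ to the hom complexes, identify that homotopy coherent nerve (on cofibrant $=$ cofibrant-fibrant objects) with the localisation via the Dwyer--Kan comparison for simplicial model categories, and check that the two identifications are compatible with the canonical functor. Your preliminary steps are also fine: a quasi-isomorphism between cofibrant complexes (all objects being fibrant) is a chain homotopy equivalence, hence becomes invertible in $H_0$, i.e.\ an equivalence in the dg-nerve, and essential surjectivity is immediate. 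Two cautions on the full-faithfulness part. First, for a general ring $R$ the category $\categ{Chain}_R$ is \emph{not} enriched over itself: $\HOM(X,Y)$ is only a complex of abelian groups unless $R$ is commutative, so the relevant structure is enrichment over $\categ{Chain}_{\mathbb{Z}}$; this is harmless, since Dold--Kan is applied to such complexes anyway. Second, the identification $H_n\HOM(X,Y)\cong\pi_n\Map_{\categ{Ch}_R}(X,Y)$ cannot serve as an input: with $\Map_{\categ{Ch}_R}$ meaning mapping spaces of the localisation, it is precisely (the homotopy-group shadow of) the full faithfulness being proved, and in your own restructured argument it comes out as a corollary of the Dwyer--Kan comparison, so it should be demoted to that status. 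With that reordering the plan is sound, and the obstacle you flag is real but resolvable along the lines you suggest: since $\mathrm{DK}\circ\tau_{\geq 0}$ is a limit-preserving right Quillen functor, applying it to the pushout-product (SM7) property of the $\categ{Chain}_{\mathbb{Z}}$-valued hom complexes yields the simplicial SM7 axiom, and tensors and cotensors over simplicial sets exist via the adjoint $K\mapsto N_{*}(\mathbb{Z}[K])\otimes_{\mathbb{Z}}-$, so the Dold--Kan enrichment does underlie a simplicial model category.
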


\begin{proposition}\cite[Proposition 1.3.2.10]{Lurie18}
The $\infty$-category $\categ{Ch}_R$ is stable.
\end{proposition}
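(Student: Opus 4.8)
The plan is to verify the stability criterion of \cite[Corollary 1.4.2.27]{Lurie18}: a pointed $\infty$-category that admits finite limits and finite colimits is stable exactly when its suspension functor $\Sigma$ is an equivalence. Accordingly I would organise the argument into three steps: produce a zero object, secure finite limits and colimits, and identify $\Sigma$ with the shift functor on chain complexes.

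First I would check that $\categ{Ch}_R$ is pointed. The zero chain complex is simultaneously initial and terminal in $\categ{Chain}_R$, and it is both fibrant (its map to the terminal object is an identity, hence a degreewise surjection) and cofibrant; a (co)fibrant object that is strictly initial (resp.\ terminal) remains $\infty$-categorically initial (resp.\ terminal) after inverting the weak equivalences, so the zero complex is a zero object of $\categ{Ch}_R$. Next, $\categ{Ch}_R$ is $\kappa$-presentable by Proposition \ref{propositioncombi}, hence complete and cocomplete by \cite[Corollary 5.5.2.4]{Lurie17}; in particular it admits all finite limits and colimits.

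The heart of the argument is the computation of $\Sigma$. By definition $\Sigma X$ is the pushout of the span $0 \leftarrow X \to 0$, and by Proposition \ref{propositionhomotopycolim} this pushout is computed by the homotopy pushout in the model category $\categ{Chain}_R$. Choosing a cofibrant representative of $X$ and factoring one leg as the cofibration $X \rightarrowtail \mathrm{Cone}(X)$ into the contractible mapping cone followed by the weak equivalence $\mathrm{Cone}(X) \xrightarrow{\sim} 0$, the homotopy pushout is modelled by the strict quotient $\mathrm{Cone}(X)/X$, which is canonically the shift $X[1]$. Thus $\Sigma X \simeq X[1]$. Since the shift $[1]$ is an automorphism of $\categ{Chain}_R$ preserving quasi-isomorphisms, with inverse $[-1]$, it descends to an equivalence of $\categ{Ch}_R$; hence $\Sigma$ is an equivalence and $\categ{Ch}_R$ is stable. (Its inverse is necessarily the loop functor $\Omega$, which is therefore given by $[-1]$.)

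The main obstacle is this identification of $\Sigma$ with the shift: it requires replacing the naive pushout by a genuine homotopy pushout, which forces a careful choice of a cofibrant model of $X$ together with a cofibrant factorisation of one leg through the mapping cone, and then a check that the resulting strict quotient really is $X[1]$. Once this is in place, every remaining step is a formal consequence of presentability and of the dictionary between homotopy colimits and $\infty$-categorical colimits furnished by Proposition \ref{propositionhomotopycolim}.
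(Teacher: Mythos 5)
The paper does not prove this statement at all: it is quoted directly from \cite[Proposition 1.3.2.10]{Lurie18}, i.e.\ the paper defers to Lurie's stability result for derived $\infty$-categories (established there in the dg-nerve model recalled in the adjacent proposition). Your proof is therefore a genuinely different, self-contained route, and most of it is sound and uses only tools the paper already has: pointedness via the cofibrant-fibrant zero complex, finite limits and colimits via presentability (Proposition \ref{propositioncombi}), and the computation of the homotopy pushout of $0 \leftarrow X \to 0$ as $\mathrm{Cone}(X)/X \cong X[1]$ via Proposition \ref{propositionhomotopycolim}, with the factorisation $X \rightarrowtail \mathrm{Cone}(X) \xrightarrow{\sim} 0$ (note that $X \to \mathrm{Cone}(X)$ is indeed a cofibration when $X$ is cofibrant, since it is degreewise split with cokernel $X[1]$).

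There is, however, one step that does not follow as written: from ``$\Sigma X \simeq X[1]$ for every $X$'' together with ``$[1]$ descends to an equivalence of $\categ{Ch}_R$'' you conclude ``hence $\Sigma$ is an equivalence.'' Objectwise equivalence to an equivalence does not make a functor an equivalence: such a functor is automatically essentially surjective, but it can fail to be full or faithful --- concretely, knowing $\Omega\Sigma X \simeq X$ as objects does not show that the unit map $X \to \Omega\Sigma X$ is invertible, which is what fully faithfulness of $\Sigma$ amounts to. What is needed is a \emph{natural} equivalence $\Sigma \simeq [1]$, and your construction delivers it provided you keep track of functoriality: the strict pushout square with corners $X$, $\mathrm{Cone}(X)$, $0$, $X[1]$ is functorial in $X$, consists of cofibrant objects, has a cofibration as one leg, and each corner preserves quasi-isomorphisms; hence, using Proposition \ref{propositioncatfunccolim}, it defines a functor $\categ{Ch}_R \to \Fun{\Delta^1 \times \Delta^1}{\categ{Ch}_R}$ landing in pushout squares by Proposition \ref{propositionhomotopycolim}, and its restriction to the underlying span is naturally equivalent to $X \mapsto (0 \leftarrow X \to 0)$ because $\mathrm{Cone}(X) \to 0$ is a natural weak equivalence. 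This exhibits $X \mapsto X[1]$ naturally as the suspension functor, and only then does invertibility of $[1]$ yield that $\Sigma$ is an equivalence, so that \cite[Corollary 1.4.2.27]{Lurie18} applies.
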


\begin{definition}
Let $n \in \mathbb Z \sqcup \{\infty\}$.
We denote $\categ{Loops}_{R,\leq n}$ the full subcategory of the $\infty$-category of chain complexes $\categ{Ch}_{R}$ spanned by the objects 
$$
0, S^{m}, m \leq n.
$$
\end{definition}

The $\infty$-category $\categ{Loops}_{R,\leq n}$ is pointed with $0$ as zero object. Moreover, for any
$m \leq n$
$$
S^m = \Sigma S^{m-1}; `\quad \Omega S^m = S^{m-1}.
$$

\begin{definition}
Let $\Psh_{\categ{stab}}(\categ{Loops}_{R, \leq n})$ be the full subcategory of $\Psh(\categ{Loops}_{R, \leq n})$ spanned by the functors $F : \categ{Loops}_{R, \leq n}^\op \to \categ S$ that sends $0$ to the final $\infty$-groupoid $\ast$ and so that the map
$$
F(S^{m}) \simeq F(\Sigma \Omega S^{m}) \to \Omega  (F(\Omega S^m ) )
$$
is an equivalence for any $m \leq n$. A functor that belongs to $\Psh_{\categ{stab}}(\categ{Loops}_{R, \leq n})$ will be called a stable functors.
\end{definition}

\begin{lemma}
The $\infty$-category $\Psh_{\categ{stab}}(\categ{Loops}_{R, \leq n})$
is stable.
\end{lemma}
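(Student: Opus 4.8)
The plan is to prove that $\categ E := \Psh_{\categ{stab}}(\categ{Loops}_{R,\leq n})$ is pointed, complete and cocomplete, and that its loop functor $\Omega$ is an equivalence; stability then follows from the standard characterization of stable $\infty$-categories as the pointed ones admitting finite limits on which $\Omega$ is invertible (\cite[\S 1.4.2]{Lurie18}). Write $\categ L = \categ{Loops}_{R,\leq n}$ and let $y \colon \categ L \to \Psh(\categ L)$ be the Yoneda embedding. The first step is to realise $\categ E$ as a reflective localization: the two defining conditions are locality conditions with respect to a small set $S$ of maps between $\omega$-small objects. The condition $F(0) \simeq \ast$ is locality with respect to $\emptyset \to y(0)$, and, for each $m \leq n$, the condition that $F(S^m) \to \Omega F(\Omega S^m)$ be an equivalence is locality with respect to the canonical map $y(0)\sqcup_{y(S^{m-1})} y(0) \to y(S^m)$ induced by the identification $\Sigma S^{m-1} \simeq S^m$ in $\categ{Ch}_R$. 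Indeed $\Map(y(0)\sqcup_{y(S^{m-1})} y(0), F) \simeq F(0)\times_{F(S^{m-1})}F(0)$, which for $F$ with $F(0)\simeq\ast$ is precisely $\Omega F(S^{m-1})$; and this source is a finite colimit of representables, hence $\omega$-small. By Lemma \ref{lemmapresentatio}, $\categ E$ is therefore reflective in $\Psh(\categ L)$ and stable under filtered colimits, so in particular presentable, hence complete and cocomplete, and its inclusion preserves all limits.

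Next I would establish pointedness. The key observation is that $0$ is a zero object of $\categ L$, since it is the zero object of $\categ{Ch}_R$ and $\categ L$ is a full subcategory; consequently the representable $y(0)$ is the terminal presheaf $T$, because $y(0)(X) = \Map_{\categ L}(X,0) \simeq \ast$ for all $X$. As $T$ lies in $\categ E$ and the inclusion preserves limits, $T$ is terminal in $\categ E$. On the other hand $\emptyset \to y(0) = T$ belongs to $S$, so the reflection $p \colon \Psh(\categ L) \to \categ E$ carries it to an equivalence; since $p$ preserves initial objects and fixes the local object $T$, we obtain $T \simeq p(\emptyset)$, which is initial in $\categ E$. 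Thus $T$ is a zero object and $\categ E$ is pointed.

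The third step is to show that $\Omega$ is an equivalence. Because $\categ E$ is closed under limits in $\Psh(\categ L)$, its loop functor is computed objectwise, $(\Omega F)(X)\simeq \Omega(F(X))$, based at the point coming from the zero object. The loop functor of $\categ{Ch}_R$ restricts to a functor $\Omega_{\categ L}\colon \categ L \to \categ L$ (as $\Omega S^m = S^{m-1}$ and $\Omega 0 = 0$), and I would set $s \colon \Psh(\categ L)\to\Psh(\categ L)$ to be precomposition with $\Omega_{\categ L}$, so that $s(F)(S^m) = F(S^{m-1})$ and $s(F)(0)=F(0)$. A direct check using the defining equivalences $F(S^m)\simeq \Omega F(S^{m-1})$ shows that $s$ preserves $\categ E$ and that $\Omega s$ and $s\Omega$ are naturally equivalent to the identity on $\categ E$, the structure maps of the spectrum condition (valid for all $m\leq n$, including the top level) furnishing the required natural equivalences. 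Hence $\Omega$ is an equivalence on the pointed, finitely complete $\infty$-category $\categ E$, and stability follows from the cited characterization.

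The main obstacle is the bookkeeping of the first step: identifying both conditions as locality with respect to maps between $\omega$-small objects, and in particular corepresenting the based loop space $\Omega F(S^{m-1})$ by the pushout $y(0)\sqcup_{y(S^{m-1})}y(0)$, which relies essentially on the pointed structure supplied by the zero object of $\categ L$. Once this identification is in place, both the application of Lemma \ref{lemmapresentatio} and the verification that $\Omega$ is invertible are routine consequences of the spectrum condition.
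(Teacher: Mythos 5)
Your proposal is correct and takes essentially the same route as the paper's proof: completeness via Lemma \ref{lemmapresentatio}, the zero object given by the representable presheaf of $0$ (terminal by Yoneda, and also initial since every stable $F$ satisfies $F(0) \simeq \ast$), and invertibility of the loop functor with pseudo-inverse given by precomposition with the loop functor of $\categ{Loops}_{R,\leq n}$. Your extra bookkeeping --- exhibiting the stable functors as the local objects for an explicit set of maps between $\omega$-small objects (the map from $\emptyset$ to the representable of $0$, and the maps from pushouts of representables encoding the suspension squares), and deducing initiality of the terminal object from the fact that the reflection inverts the first of these maps --- only spells out what the paper leaves implicit in its appeal to Lemma \ref{lemmapresentatio}.
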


\begin{proof}
From Lemma \ref{lemmapresentatio}, it is $\omega$-presentable hence complete.
The final object of $\Psh(\categ{Loops}_{R, \leq n})$ is the image through the Yoneda embedding of $0 \in \categ{Loops}_{R, \leq n}$. It belongs to the full subcategory of stable functors and then is also the final object of this subcategory. Moreover, for any stable functor $F$, we have equivalences
$$
\Mapp{\Psh(\categ{Loops}_{R, \leq n})}{\ast}{F}
\simeq
F(0) \simeq \ast .
$$
Hence, this final object is a zero object.  Moreover, the loop functor is an equivalence whose pseudo inverse is the precomposition with the loop functor of
$\categ{Loops}_{R, \leq n}$.
\end{proof}

\begin{lemma}
The inclusion functor
$$
\Psh_{\categ{stab}}(\categ{Loops}_{R, \leq n}) \hookrightarrow \Psh(\categ{Loops}_{R, \leq n})
$$
is right adjoint and preserves filtered colimits.
\end{lemma}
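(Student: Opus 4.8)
The plan is to realize $\Psh_{\categ{stab}}(\categ{Loops}_{R,\leq n})$ as the subcategory of $S$-local objects for an explicit small set $S$ of morphisms between $\omega$-small objects of $\Psh(\categ{Loops}_{R,\leq n})$, and then to invoke Lemma \ref{lemmapresentatio} with $\kappa=\omega$; that lemma delivers at once that the inclusion is right adjoint and that it preserves filtered colimits. Write $y$ for the Yoneda embedding and $\Map(-,-)$ for mapping spaces in $\Psh(\categ{Loops}_{R,\leq n})$, so that $F(X)\simeq \Map(y(X),F)$ for every object $X$ and every presheaf $F$. I take $S$ to consist of the map $s_0 : \emptyset \to y(0)$ out of the initial presheaf, together with, for each $m\leq n$, a comparison map $\phi_m : P_m \to y(S^m)$ constructed as follows.

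Set $P_m := y(0)\sqcup_{y(S^{m-1})} y(0)$, the pushout in $\Psh(\categ{Loops}_{R,\leq n})$ of the span both of whose legs are the image under $y$ of the zero map $S^{m-1}\to 0$. The suspension square exhibiting $S^m=\Sigma S^{m-1}$ in the stable $\infty$-category $\categ{Ch}_R$ is a cocone under this span with vertex $S^m$, so applying $y$ and invoking the universal property of the pushout produces $\phi_m$. Because mapping out of a colimit computes a limit, a presheaf $F$ is $s_0$-local exactly when $F(0)\simeq \Map(\emptyset,F)\simeq \ast$, which is condition (1) of the definition of a stable functor; and, granting this, $F$ is $\phi_m$-local exactly when the induced map $F(S^m)\to \Map(P_m,F)\simeq F(0)\times_{F(S^{m-1})}F(0)\simeq \ast\times_{F(S^{m-1})}\ast \simeq \Omega F(S^{m-1})$ is an equivalence, which is condition (2). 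Thus $F$ is $S$-local if and only if $F$ is a stable functor.

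Finally one checks that both endpoints of every map in $S$ are $\omega$-small: the representables $y(0),y(S^{m-1}),y(S^m)$ are $\omega$-small; the initial presheaf $\emptyset$ is $\omega$-small, either because $\omega$-small objects are closed under finite colimits and $\emptyset$ is the empty colimit, or directly because $\Map(\emptyset,-)\simeq \ast$ commutes with filtered colimits; and each $P_m$, being a pushout of $\omega$-small objects, is again $\omega$-small. As $S$ is small, Lemma \ref{lemmapresentatio} applies with $\kappa=\omega$ and yields the statement. The one delicate point is the identification in the second paragraph: one must verify that the natural comparison map $F(S^m)\to \Omega F(S^{m-1})$ of the definition agrees with the map induced by $\phi_m$, and that imposing $s_0$-locality first is precisely what turns the fibre product $F(0)\times_{F(S^{m-1})}F(0)$ into the based loop space $\Omega F(S^{m-1})$.
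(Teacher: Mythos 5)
Your proposal is correct and takes essentially the same approach as the paper: the paper's entire proof is the single sentence ``This is just an application of Lemma \ref{lemmapresentatio}.'' Your explicit localizing set $S$ (the map from the initial presheaf to $y(0)$ together with the comparison maps $P_m \to y(S^m)$ out of the Yoneda-image pushouts of the suspension squares) merely fills in the verification that the stable functors are exactly the $S$-local objects for a small set $S$ of maps between $\omega$-small objects, which the paper leaves implicit.
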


\begin{proof}
This is just an application of Lemma \ref{lemmapresentatio}.
\end{proof}

\begin{lemma}
Any object of the form $S^m$ for $m \in \mathbb Z$ is $\omega$-small in $\categ{Ch}_R$. 
\end{lemma}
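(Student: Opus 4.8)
The plan is to show that the corepresentable functor $\Mapp{\categ{Ch}_R}{S^m}{-} : \categ{Ch}_R \to \categ S$ preserves filtered colimits. First I would reduce to the case $m=0$: since $\categ{Ch}_R$ is stable, the suspension $\Sigma$ is an equivalence and $S^m \simeq \Sigma^m S^0$, while any equivalence preserves $\omega$-small objects; hence it suffices to prove that $S^0 = R$ is $\omega$-small. The two ingredients will then be an explicit homological description of the mapping spaces out of $S^0$, and the fact that filtered colimits in $\categ{Ch}_R$ are computed by the underlying strict colimits of chain complexes.

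For the first ingredient, note that $S^0$ is cofibrant (the map $0 \to S^0$ is a degreewise injection whose cokernel $S^0$ is degreewise projective) and that every chain complex is fibrant (fibrations being the degreewise surjections). Thus, for any $X$, the mapping space $\Mapp{\categ{Ch}_R}{S^0}{X}$ is modeled by the Dold--Kan construction applied to the connective truncation of the hom-complex $\HOM(S^0, X)$. Since $S^0$ is $R$ concentrated in degree $0$, one has $\HOM(S^0, X) \cong X$, and therefore a natural isomorphism
$$
\pi_k \Mapp{\categ{Ch}_R}{S^0}{X} \cong H_k(X), \qquad k \geq 0 .
$$
For the second ingredient, the ordinary colimit of a filtered diagram of chain complexes preserves quasi-isomorphisms, because filtered colimits of $R$-modules are exact; combining the cofinality of filtered posets into filtered $\infty$-categories (used in the proof of Lemma~\ref{lemmapreservefiltered}) with Proposition~\ref{propositionhomotopycolim} and Proposition~\ref{propositioncatfunccolim}, the filtered colimit in $\categ{Ch}_R$ of a diagram of cofibrant objects is represented by this underlying strict colimit.

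Putting these together, for a filtered diagram $(X_\alpha)$ in $\categ{Ch}_R$ the natural comparison map $\colim_\alpha \Mapp{\categ{Ch}_R}{S^0}{X_\alpha} \to \Mapp{\categ{Ch}_R}{S^0}{\colim_\alpha X_\alpha}$ induces on $\pi_k$ the canonical map $\colim_\alpha H_k(X_\alpha) \to H_k(\colim_\alpha X_\alpha)$. Since filtered colimits of spaces commute with homotopy groups, and homology commutes with filtered colimits (again by exactness of filtered colimits of modules), this map is an isomorphism for every $k \geq 0$ and every choice of basepoint, so the comparison map is an equivalence. Hence $S^0$, and therefore every $S^m$, is $\omega$-small. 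I expect the only genuine subtlety to be the second ingredient, namely certifying that the homotopy-theoretic filtered colimit in $\categ{Ch}_R$ coincides with the strict colimit of chain complexes and is compatible with the homological computation of the mapping space; the remainder is bookkeeping at the level of homotopy groups.
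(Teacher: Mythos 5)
Your proof is correct, but it takes a genuinely different route from the paper's in its main step. The paper makes the same initial reduction to $S^0$ (it argues that $\Sigma$ preserves $\omega$-small objects via finite colimits; your appeal to $\Sigma$ being an equivalence of the stable $\infty$-category $\categ{Ch}_R$ works equally well). For the heart of the argument, however, the paper computes no homotopy groups: it uses the Dold--Kan Quillen adjunction between the Kan--Quillen model structure on simplicial sets and $\categ{Chain}_R$, under which the left adjoint sends the point to $S^0$, so that $\Mapp{\categ{Ch}_R}{S^0}{-}$ is identified with the right derived functor $\categ{Chain}_R[\mathrm W^{-1}] \to \categ{sSet}[\mathrm W^{-1}]$; smallness of $S^0$ then follows by quoting Lemma \ref{lemmapreservefiltered} as a black box. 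You instead unwind the mapping space explicitly as the Dold--Kan construction on the truncated hom complex, obtaining $\pi_k \Mapp{\categ{Ch}_R}{S^0}{X} \cong H_k(X)$, and reduce everything to exactness of filtered colimits of $R$-modules. Note that both arguments ultimately rest on the same point, which you correctly flag as the only real subtlety: that $\infty$-categorical filtered colimits in $\categ{Ch}_R$ are represented by strict colimits of chain complexes, via cofinality of filtered posets and Propositions \ref{propositionhomotopycolim} and \ref{propositioncatfunccolim}; in the paper this is packaged once and for all inside the proof of Lemma \ref{lemmapreservefiltered}. What the paper's formal route buys is brevity and freedom from basepoint bookkeeping; what your computational route buys is self-containedness and the stronger, independently useful fact that the objects $S^m$ corepresent homology on $\categ{Ch}_R$.
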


\begin{proof}
First, the equivalence $\Sigma$ and thus its inverse $\Omega$ preserves $\omega$-small objects since $0$ is $\omega$-small and finite colimits of $\omega$-small objects are $\omega$-small. Hence, it suffices to prove that $S^0$ is small.

The Dold-Kan correspondence gives a Quillen adjunction relating the Kan Quillen model category of simplicial sets to the model category $\categ{Chain}_R$. Proving that $S^0 \in \categ{Ch}_R$ is $\omega$-small amounts to prove that the right derived functor
$$
\categ{Chain}[\mathrm W^{-1}] \to \categ{sSet}[\mathrm W^{-1}]
$$
preserves filtered colimits. This follows from Lemma \ref{lemmapreservefiltered}.
\end{proof}

\begin{proposition}\label{propositionchainpresentation}
Let $n \in \mathbb Z \sqcup \{\infty\}$.
The functor
$$
\categ{Ch}_R \to \Psh_{\categ{stab}}(\categ{Loops}_{R, \leq n})
$$
induced by the inclusion of
$\categ{Loops}_{R, \leq mn}$ into $\categ{Ch}_R$ is an equivalence.
\end{proposition}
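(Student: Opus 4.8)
The plan is to recognise this statement as an instance of the stable $\omega$-presentable framework developed around Lemma \ref{lemmacoreflect} and Proposition \ref{propositionstableequivalence}. Concretely, I would set $\categ C = \categ{Ch}_R$, which is stable and cocomplete, and take $\categ D = \categ{Loops}_{R,\leq n}$ as the relevant small full subcategory; its objects $0$ and $S^m$ (for $m \leq n$) are $\omega$-small in $\categ{Ch}_R$ by the preceding lemma (the object $0$ being trivially so), so the restricted Yoneda adjunction
$$
\begin{tikzcd}
\Psh(\categ{Loops}_{R,\leq n}) \ar[rr, shift left] && \categ{Ch}_R \ar[ll, shift left]
\end{tikzcd}
$$
has a right adjoint preserving filtered colimits. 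The functor in the statement is exactly this right adjoint, corestricted to $\categ E := \Psh_{\categ{stab}}(\categ{Loops}_{R,\leq n})$; call it $R$, with left adjoint $L$.

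Next I would check that $\categ E$ plays the role required by the framework. It is stable and the inclusion into $\Psh(\categ{Loops}_{R,\leq n})$ is right adjoint and preserves filtered colimits by the lemmas above, so $\categ E$ is stable through filtered colimits, and being reflective it is closed under limits. The one remaining hypothesis is that $\categ E$ contains the image of $\categ{Ch}_R$: for $X \in \categ{Ch}_R$ the presheaf $S^m \mapsto \Mapp{\categ{Ch}_R}{S^m}{X}$ sends $0$ to $\ast$, and using $S^m = \Sigma S^{m-1}$, $\Omega S^m = S^{m-1}$ together with the suspension–loop adjunction of the stable category $\categ{Ch}_R$ one gets
$$
\Mapp{\categ{Ch}_R}{S^m}{X} \simeq \Mapp{\categ{Ch}_R}{\Sigma \Omega S^m}{X} \simeq \Omega \, \Mapp{\categ{Ch}_R}{\Omega S^m}{X},
$$
which is precisely the stability condition defining $\Psh_{\categ{stab}}$. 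Hence the framework applies, $L$ is fully faithful by Lemma \ref{lemmacoreflect}, and by Proposition \ref{propositionstableequivalence} it suffices to prove that $R$ is conservative.

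For conservativity I would argue through homology. Since $\Sigma^k S^m \simeq S^{m+k}$ and $[S^j, X] \cong H_j(X)$ in the derived category of $R$, one has $\pi_k \Mapp{\categ{Ch}_R}{S^m}{X} \cong H_{m+k}(X)$ for every $k \geq 0$. Now if $f : X \to Y$ is a morphism of $\categ{Ch}_R$ with $R(f)$ an equivalence in $\categ E$, then $\Mapp{\categ{Ch}_R}{S^m}{X} \to \Mapp{\categ{Ch}_R}{S^m}{Y}$ is an equivalence for every $m \leq n$, hence $H_{m+k}(f)$ is an isomorphism for all $m \leq n$ and all $k \geq 0$. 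As $m$ ranges over arbitrarily negative integers, every degree $j \in \mathbb Z$ is of the form $m+k$ with $m \leq n$ and $k \geq 0$, so $H_j(f)$ is an isomorphism for all $j$; thus $f$ is a quasi-isomorphism and an equivalence in $\categ{Ch}_R$. This establishes conservativity of $R$ and completes the proof.

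The two steps that require genuine verification are the matching of the stability condition with the $\Sigma \dashv \Omega$ relation in $\categ{Ch}_R$ (needed to place the image of $\categ{Ch}_R$ inside $\categ E$) and the identification $\pi_k \Mapp{\categ{Ch}_R}{S^m}{X} \cong H_{m+k}(X)$; I expect the latter homotopy-group computation to be the main point, as everything else is a formal application of Proposition \ref{propositionstableequivalence}.
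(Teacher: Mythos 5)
Your proposal is correct and follows essentially the same route as the paper: both reduce to Proposition \ref{propositionstableequivalence} via the chain of adjunctions through $\Psh_{\categ{stab}}(\categ{Loops}_{R,\leq n})$, using $\omega$-smallness of the $S^m$ for filtered-colimit preservation and stability of both sides, and conclude by conservativity of the functors $X \mapsto \Mapp{\categ{Ch}_R}{S^m}{X}$ for $m \leq n$. You merely make explicit two points the paper leaves implicit — that the stability condition on presheaves matches the $\Sigma \dashv \Omega$ adjunction (so representables land in $\categ E$) and that joint conservativity follows from $\pi_k \Mapp{\categ{Ch}_R}{S^m}{X} \cong H_{m+k}(X)$ with $m$ ranging over arbitrarily negative integers — both of which are correct.
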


\begin{proof}
We have a sequence of adjunctions
$$
\begin{tikzcd}
\Psh(\categ{Loops}_{R, \leq n})
\ar[rr, shift left]
&&
\Psh_{\categ{stab}}(\categ{Loops}_{R, \leq n}) \ar[rr, shift left]
\ar[ll, shift left]
&& \categ{Ch}_R .
\ar[ll, shift left]
\end{tikzcd}
$$
whose right adjoint functors preserve filtered colimits, since the objects of $\categ{Loops}_{R, \leq n}$ are all $\omega$-small in $\categ{Ch}_R$.
Moreover, the $\infty$-categories $\Psh_{\categ{stab}}(\categ{Loops}_{R, \leq n})$ and $\categ{Ch}_R$ are both stable. To conclude using Proposition \ref{propositionstableequivalence}, it suffices to notice that the functors
$$
X \in \categ{Ch}_R \mapsto \Mapp{\categ{Ch}_R}{S^m}{X} \in \categ S, \ m \leq n
$$
are jointly conservative.
\end{proof}

\subsection{Products of chain complexes}

Let $K$ be a set. We will call the elements of $K$ colours.

\begin{definition}
We denote
$$
\categ{Chain}_R^K = \Fun{K}{\categ{Chain}_R}; \quad \categ{Ch}_R^K = \Fun{K}{\categ{Ch}_R}.
$$
Moreover, for any $k \in K$, and any $m \in \mathbb Z$, let us denote $S^m_k$ the object of $\categ{Chain}_R^K$ given by the functor
that sends $k$ to $S^m$ and any other $k' \neq k$ to $0$. We will denote also $S^m_k$ its image in the $\infty$-category $\categ{Ch}_R^K$. Finally, we define similarly $D^m_k$.
\end{definition}

\begin{proposition}
The product category $\categ{Chain}_R^K$ admits a combinatorial model structure whose
\begin{itemize}
    \itemt fibrations are colourwise degreewise surjections;
    \itemt weak equivalences are colourwise quasi-isomorphisms;
    \itemt generating cofibrations are given by maps
    $$
        S_k^n  \to D_k^{n+1}, \ n \in \mathbb Z, k \in K;
    $$
    \itemt generating acyclic cofibrations are given maps
    $$
    0 \to D_k^n, n \in \mathbb Z;
    $$
    \itemt cofibrations are colourwise degreewise injections whose cokernel is colourwise degreewise projective.
\end{itemize}
\end{proposition}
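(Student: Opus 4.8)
The category $\categ{Chain}_R^K = \Fun{K}{\categ{Chain}_R}$ is the functor category from the discrete category $K$ into the combinatorial model category $\categ{Chain}_R$, so the plan is to exhibit the claimed structure as the projective model structure and then to identify each of the five classes of maps. First I would invoke Proposition \ref{propositioncatfunccolim} with $\categ I = K$: it furnishes a combinatorial model structure on $\categ{Chain}_R^K$ whose weak equivalences are the objectwise weak equivalences. Since $K$ is discrete, ``objectwise'' coincides with ``colourwise'', so the weak equivalences are exactly the colourwise quasi-isomorphisms; and as this is the projective model structure of \cite{Lurie18}, its fibrations are the colourwise fibrations, that is the colourwise degreewise surjections. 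This settles two of the five items at once.

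Next I would identify the generating (acyclic) cofibrations. For each colour $k \in K$ the evaluation functor $\mathrm{ev}_k : \categ{Chain}_R^K \to \categ{Chain}_R$ admits a left adjoint $F_k$; because $K$ is discrete, $F_k$ sends a chain complex $C$ to the tuple equal to $C$ in colour $k$ and equal to $0$ in every other colour. In particular $F_k(S^n) = S_k^n$ and $F_k(D^{n+1}) = D_k^{n+1}$, and similarly for the acyclic generators. The standard description of the generators of a projective model structure, namely the set $\{F_k(i) : k \in K,\ i \text{ a generator of } \categ{Chain}_R\}$, then yields precisely the maps $S_k^n \to D_k^{n+1}$ (for $n \in \mathbb Z$, $k \in K$) as generating cofibrations and the maps $0 \to D_k^n$ (for $n \in \mathbb Z$, $k \in K$) as generating acyclic cofibrations.

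It remains to characterise the cofibrations. A map of $\categ{Chain}_R^K$ is a cofibration exactly when it has the left lifting property against all acyclic fibrations, and the latter are exactly the colourwise acyclic fibrations. Since each $\mathrm{ev}_k$ is simultaneously a left and a right adjoint, once more using that $K$ is discrete, it both preserves and detects the relevant lifting problems, so a map is a cofibration if and only if each of its colourwise components is a cofibration in $\categ{Chain}_R$. Spelling out the cofibrations of $\categ{Chain}_R$, the degreewise injections with degreewise projective cokernel, gives the stated description: colourwise degreewise injections whose cokernel is colourwise degreewise projective.

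The one point requiring genuine, if routine, care is this last identification of cofibrations, where one must verify that passing to the product over a possibly infinite set $K$ does not enlarge the class of cofibrations beyond the colourwise ones. This follows from the adjointness properties of the evaluation functors recorded above, together with the fact that the small object argument underlying Proposition \ref{propositioncatfunccolim} builds every cofibration as a retract of transfinite composites of pushouts of the colourwise generators $F_k(i)$.
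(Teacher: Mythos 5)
Your proof is correct, but it follows a genuinely different route from the paper's, whose entire proof is a one-sentence appeal to the standard fact that a product of model categories carries the componentwise model structure: fibrations, cofibrations and weak equivalences are all taken colourwise, and combinatoriality is inherited. You instead view $\categ{Chain}_R^K$ as the functor category $\Fun{K}{\categ{Chain}_R}$, invoke Proposition \ref{propositioncatfunccolim} to obtain the projective model structure, and then use the adjoints of the evaluation functors (which exist on both sides because $K$ is discrete and $0$ is both initial and terminal in $\categ{Chain}_R$) to identify the generating sets and to show that projective cofibrations coincide with colourwise cofibrations. For a discrete indexing category the projective structure and the product structure are of course the same, and your third paragraph is precisely a proof of that coincidence. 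What your route buys is that it actually justifies the cofibrant-generation clauses of the statement: knowing that (co)fibrations and weak equivalences are colourwise does not by itself say the structure is generated by $\{S^n_k \to D^{n+1}_k\}$ and $\{0 \to D^n_k\}$; that is exactly your adjunction argument identifying the maps with the right lifting property against $\{F_k(i)\}$ with the colourwise acyclic fibrations, a point the paper's citation leaves implicit. What the paper's route buys is brevity, and independence from reading Proposition \ref{propositioncatfunccolim} (whose statement in the paper only specifies the weak equivalences) as the projective model structure; both routes, in the end, reduce the description of cofibrations to the single-colour description already recorded for $\categ{Chain}_R$.
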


\begin{proof}
This proposition is just an incarnation of the fact that the product of model categories inherits a model structure whose fibrations, cofibrations and weak equivalences are componentwise respectively fibrations, cofibrations and weak equivalences.
\end{proof}

\begin{proposition}
The functor
$$
\categ{Chain}_R^K \to \categ{Ch}_R^K
$$
induces an equivalence
$$
\categ{Chain}_R^K[\mathrm{W}^{-1}] \to \categ{Ch}_R^K .
$$
\end{proposition}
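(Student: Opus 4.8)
The plan is to realise this equivalence as a direct application of Lemma \ref{lemmalocalisationproduct}, which asserts that localisation commutes with products of marked simplicial sets.

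First I would record the formal identification underlying the statement. Since $K$ is a set, viewed as a discrete category, the functor category $\categ{Chain}_R^K = \Fun{K}{\categ{Chain}_R}$ is nothing but the $K$-fold product $\prod_{k \in K} \categ{Chain}_R$, and by the preceding proposition the class $\mathrm W$ of colourwise weak equivalences consists exactly of those morphisms whose $k$-component is a quasi-isomorphism for every $k \in K$. Because the nerve functor is a right adjoint and therefore preserves products, this yields an isomorphism of marked simplicial sets
$$
(N(\categ{Chain}_R^K), \mathrm W) \cong \prod_{k \in K} (N(\categ{Chain}_R), \mathrm W),
$$
where the marking on the right-hand product is the product marking.

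Next I would apply Lemma \ref{lemmalocalisationproduct} with $I = K$ and each $X_k = (N(\categ{Chain}_R), \mathrm W)$. This produces an equivalence
$$
\categ{Chain}_R^K[\mathrm W^{-1}] \simeq \prod_{k \in K} \bigl(\categ{Chain}_R[\mathrm W^{-1}]\bigr) = \prod_{k \in K} \categ{Ch}_R .
$$
Using once more that $K$ is discrete, the right-hand side is identified with $\Fun{K}{\categ{Ch}_R} = \categ{Ch}_R^K$, and under these identifications the comparison map of Lemma \ref{lemmalocalisationproduct} agrees with the functor induced by the colourwise localisation, namely the map $\categ{Chain}_R^K \to \categ{Ch}_R^K$ in the statement.

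The argument is essentially formal once Lemma \ref{lemmalocalisationproduct} is granted, so there is no serious obstacle; the single point demanding care is the compatibility of markings. One must confirm that the product marking appearing on the right of the displayed isomorphism above — a morphism being marked precisely when each of its factors is marked — coincides with the colourwise weak equivalences $\mathrm W$. This is immediate from the description of $\mathrm W$ in the previous proposition, and it is exactly that description which makes the reduction to Lemma \ref{lemmalocalisationproduct} possible. Note that no finiteness of $K$ is needed, since Lemma \ref{lemmalocalisationproduct} already treats arbitrary index sets.
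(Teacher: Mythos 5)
Your proposal is correct and follows exactly the paper's route: the paper's own proof is the one-line observation that the statement "is just an application of Lemma \ref{lemmalocalisationproduct}." You have simply made explicit the bookkeeping the paper leaves implicit (the nerve preserving products, and the product marking coinciding with the colourwise weak equivalences), which is a fine and accurate elaboration of the same argument.
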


\begin{proof}
This is just an application of Lemma \ref{lemmalocalisationproduct}.
\end{proof}

\begin{definition}
For any $n \in \mathbb Z \sqcup \{\infty\}$,
let $\categ{Loops}_{R, \leq n}^{(K)}$ be the full subcategory of $\categ{Chain}_R^K[\mathrm{W}^{-1}]$ spanned by the objects, $0$ and $S^m_k$ for $m \leq n$ and $k \in K$.
\end{definition}

\begin{definition}
For any $n \in \mathbb Z \sqcup \{\infty\}$, let $\Psh_{\categ{stab}}(\categ{Loops}_{R, \leq n}^{(K)})$ the full subcategory of $\Psh(\categ{Loops}_{R, \leq n}^{(K)})$ spanned by functors $F$ so that
\begin{itemize}
    \itemt $F(0) \simeq \ast$;
    \itemt the map
    $$
    F(S^m_k) \simeq F(\Sigma \Omega S^{m-1}_k) \to  \Omega F(\Omega S^{m-1}_k)
    $$
    is an equivalence for any $m \leq n$ and any $k \in K$.
\end{itemize}
A functor that satisfies such conditions is called stable.
\end{definition}

\begin{proposition}\label{propositionchainproductpresentation}
For any $n \in \mathbb Z \sqcup \{\infty\}$,
the inclusion
$$
\categ{Loops}_{R, \leq n}^{(K)} 
\to
\categ{Chain}_R^K \left[\mathrm{W}^{-1}\right]
$$
induces and equivalence of $\infty$-categories between $\categ{Chain}_R^K[\mathrm{W}^{-1}]$ and the full subcategory $\Psh_{\categ{stab}}(\categ{Loops}_{R, \leq n}^{(K)})$ of $\Psh(\categ{Loops}_{R, \leq n}^{(K)})$.
\end{proposition}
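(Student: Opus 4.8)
The plan is to follow verbatim the strategy used in the proof of Proposition \ref{propositionchainpresentation}, replacing the single index $m$ by the pair $(m,k)$ and systematically exploiting the fact that all relevant structure in the product $\categ{Ch}_R^K = \Fun{K}{\categ{Ch}_R}$ is computed colourwise. First I would record that, since $\categ{Chain}_R^K[\mathrm{W}^{-1}] \simeq \categ{Ch}_R^K$ by the previous proposition, it suffices to produce the desired equivalence between $\categ{Ch}_R^K$ and $\Psh_{\categ{stab}}(\categ{Loops}_{R, \leq n}^{(K)})$. As a product of copies of the stable $\infty$-category $\categ{Ch}_R$, the category $\categ{Ch}_R^K$ is itself stable: it is pointed, and finite limits and colimits, hence the loop and suspension functors, are computed colourwise. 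The very same argument that proved $\Psh_{\categ{stab}}(\categ{Loops}_{R, \leq n})$ stable shows that $\Psh_{\categ{stab}}(\categ{Loops}_{R, \leq n}^{(K)})$ is stable too.

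Next I would set up the sequence of adjunctions
$$
\begin{tikzcd}
\Psh(\categ{Loops}_{R, \leq n}^{(K)})
\ar[rr, shift left]
&&
\Psh_{\categ{stab}}(\categ{Loops}_{R, \leq n}^{(K)}) \ar[rr, shift left, "L"]
\ar[ll, shift left]
&& \categ{Ch}_R^K
\ar[ll, shift left, "R"]
\end{tikzcd}
$$
induced by the inclusion $\categ{Loops}_{R, \leq n}^{(K)} \hookrightarrow \categ{Ch}_R^K$. To see that the right adjoints preserve filtered colimits, it suffices to check that every object $S^m_k$ is $\omega$-small in $\categ{Ch}_R^K$. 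This is inherited from the single-colour case: the functor $\Mapp{\categ{Ch}_R^K}{S^m_k}{-}$ factors through the colimit-preserving projection onto the $k$-th coordinate and is naturally identified with $\Mapp{\categ{Ch}_R}{S^m}{(-)_k}$, which preserves filtered colimits because $S^m$ is $\omega$-small in $\categ{Ch}_R$. The remaining hypotheses of Proposition \ref{propositionstableequivalence} hold just as before: $\categ E = \Psh_{\categ{stab}}(\categ{Loops}_{R, \leq n}^{(K)})$ is reflective in $\Psh(\categ{Loops}_{R, \leq n}^{(K)})$ by Lemma \ref{lemmapresentatio} and stable through limits and filtered colimits, and it contains the image of $\categ{Ch}_R^K$ since for $X \in \categ{Ch}_R^K$ stability gives $\Mapp{\categ{Ch}_R^K}{S^m_k}{X} \simeq \Omega\,\Mapp{\categ{Ch}_R^K}{\Omega S^m_k}{X}$.

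With both categories stable and the right adjoint accessible, Proposition \ref{propositionstableequivalence} reduces the claim to the conservativity of $R$. Equivalently, I would show that the functors $X \mapsto \Mapp{\categ{Ch}_R^K}{S^m_k}{X}$, for $m \leq n$ and $k \in K$, are jointly conservative. Since a map $f$ in $\categ{Ch}_R^K = \Fun{K}{\categ{Ch}_R}$ is an equivalence if and only if each of its colourwise components $f_k$ is an equivalence, and since $\Mapp{\categ{Ch}_R^K}{S^m_k}{X} \simeq \Mapp{\categ{Ch}_R}{S^m}{X_k}$, the joint conservativity follows at once from that of the functors $\Mapp{\categ{Ch}_R}{S^m}{-}$ established in the proof of Proposition \ref{propositionchainpresentation}.

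I expect no genuine obstacle beyond bookkeeping: the three substantive inputs—stability, $\omega$-smallness of the generators, and joint conservativity—are all inherited colourwise from the single-colour situation. Consequently the only point that really needs to be stated carefully is that products of chain complexes behave coordinatewise with respect to limits, colimits, and mapping spaces, after which the argument is a direct transcription of the proof of Proposition \ref{propositionchainpresentation}.
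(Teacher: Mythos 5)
Your proposal is correct and takes essentially the same approach as the paper: the paper's own proof simply states that the result "follows from the same arguments as in the proof of Proposition \ref{propositionchainpresentation}," with the key ingredients (stability and $\omega$-smallness of the objects $S^m_k$) inherited colourwise from the case $K=\ast$. Your write-up is exactly that argument with the bookkeeping made explicit — the reduction to Proposition \ref{propositionstableequivalence}, the colourwise identification $\Mapp{\categ{Ch}_R^K}{S^m_k}{X} \simeq \Mapp{\categ{Ch}_R}{S^m}{X_k}$, and the resulting joint conservativity.
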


\begin{proof}
This follows from the same arguments as in the proof of Proposition \ref{propositionchainpresentation}. Moreover, the ingredients of this proof (stability, smallness of objects $S^m_k$) are straightforward consequences of their counterparts in the case $K=\ast$.
\end{proof}

\begin{remark}
One can also prove that one has the following equivalence of $\infty$-categories
$$
\categ{Loops}_{R, \leq n}^{(K)} \simeq \categ{Loops}_{R, \leq n} \coprod_K \ast .
$$
This leads then to the fact that the functor
$$
Psh_{\categ{stab}}(\categ{Loops}_{R, \leq n}^{(K)})
\to \prod_{k \in K} Psh_{\categ{stab}}(\categ{Loops}_{R, \leq n})
$$
is an equivalence.
\end{remark}

\subsection{Algebras in chain complexes}

Let $K$ be a set called the set of colours. Let us consider an adjunction of $\infty$-categories
$$
\begin{tikzcd}
\categ{Ch}_{R}^K \ar[rr, shift left, "T_M"]
&& \categ{A} . \ar[ll, shift left, "U^M"]
\end{tikzcd}
$$
that is is strongly monadic and $\omega$-accessible. Our goal is to describe several presentations of the $\infty$-category $\categ A$

\subsubsection{The cellular presentation}

Let $n \in \mathbb Z \sqcup \{\infty\}$.

\begin{definition}
A $n$-elementary morphism $f : A \to B$ in $\categ A$ is a morphism so that there exists an integer $m \leq n-1$, a colour $k \in K$ and a pushout
$$
\begin{tikzcd}
 T_M(S_k^m) 
 \ar[r] \ar[d]
 & A
 \ar[d]
 \\
 T_M(0)
 \ar[r]
 & B .
\end{tikzcd}
$$
\end{definition}

\begin{definition}
A finitely $n$-cellular morphism $f : A \to B$ in $\categ A$ is the composition of a finite collection of $n$-elementary morphisms.
\end{definition}

\begin{definition}
An object $A \in \categ A$ is finitely $n$-cellular if the morphism $\emptyset = T_M(0) \to A$
is finitely $n$-cellular.
We denote $\categ{Cell}_{M,\leq n}$ the full subcategory of $\categ A$ spanned by finitely $n$-cellular objects.
\end{definition}

\begin{lemma}\label{lemmacellular}
The full subcategory $\categ{Cell}_{M,\leq n}$ of $\categ A$ is stable through coproducts and its objects are $\omega$-small in $\categ A$.
\end{lemma}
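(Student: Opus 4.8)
The plan is to establish the two assertions separately, each by an induction on the number of cells, using the pasting properties of pushouts together with the compactness of the generating objects.

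For stability under finite coproducts, I would first note that the empty coproduct is the initial object $T_M(0) = \emptyset$, which is the target of the empty composition of $n$-elementary morphisms and hence finitely $n$-cellular. For a binary coproduct $A \sqcup B$ of two finitely $n$-cellular objects, I would reduce the problem to showing that the canonical map $A \to A \sqcup B$ is a finitely $n$-cellular \emph{morphism}; composing it with the finitely $n$-cellular map $\emptyset \to A$ then exhibits $\emptyset \to A \sqcup B$ as finitely $n$-cellular. Since $A \sqcup B$ is the pushout of $\emptyset \to B$ along $\emptyset \to A$, and since $\emptyset \to B$ factors as a finite tower $\emptyset = B_0 \to B_1 \to \cdots \to B_q = B$ of $n$-elementary morphisms, I would take the cobase change of this tower along $\emptyset \to A$ to obtain a tower $A = A \sqcup B_0 \to \cdots \to A \sqcup B_q = A \sqcup B$. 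The crucial point is that each map in this tower is again $n$-elementary: this follows from the pasting lemma for pushouts, since each $B_{i-1} \to B_i$ is a pushout of $T_M(S_{k_i}^{m_i}) \to T_M(0)$ with $m_i \leq n-1$, and cobase-changing a pushout square yields a further pushout of the same left vertical map. In particular the degree bound $m_i \leq n-1$ is preserved.

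For the $\omega$-smallness I would argue again by induction on the number of cells. The base case asks that $T_M(0)$ and each $T_M(S_k^m)$ be $\omega$-small: for the former, $\Mapp{\categ A}{T_M(0)}{-} \simeq \ast$ is constant, and for the latter, $\Mapp{\categ A}{T_M(S_k^m)}{X} \simeq \Mapp{\categ{Ch}_R^K}{S_k^m}{U^M X}$ preserves filtered colimits because $S_k^m$ is $\omega$-small in $\categ{Ch}_R^K$ and $U^M$ preserves filtered colimits, the adjunction being $\omega$-accessible. For the inductive step, if $A$ is $\omega$-small and $B = A \sqcup_{T_M(S_k^m)} T_M(0)$ is obtained by attaching a single cell, then for any filtered diagram $(X_\alpha)$ the space $\Mapp{\categ A}{B}{\colim_\alpha X_\alpha}$ is the pullback of $\Mapp{\categ A}{A}{\colim_\alpha X_\alpha}$ and $\Mapp{\categ A}{T_M(0)}{\colim_\alpha X_\alpha}$ over $\Mapp{\categ A}{T_M(S_k^m)}{\colim_\alpha X_\alpha}$; since each of these three spaces is the filtered colimit of the corresponding spaces at the $X_\alpha$, and since finite limits commute with filtered colimits in $\categ S$, the comparison map $\colim_\alpha \Mapp{\categ A}{B}{X_\alpha} \to \Mapp{\categ A}{B}{\colim_\alpha X_\alpha}$ is an equivalence.

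I expect the main obstacle to be organisational rather than conceptual: carefully invoking the pasting lemma so that cobase change demonstrably preserves $n$-elementary morphisms together with the bound $m \leq n-1$, and tracking the cell towers so that what is produced really is a finite composition of $n$-elementary morphisms. The compactness step is essentially the standard fact that finite limits commute with filtered colimits in $\categ S$, applied to the defining pushout square of a single cell attachment.
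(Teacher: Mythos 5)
Your proposal is correct and follows essentially the same route as the paper: coproduct stability via cobase change of the cell tower using the pasting lemma for pushouts, and $\omega$-smallness from compactness of the generators (via $\omega$-accessibility of the adjunction) plus closure of $\omega$-small objects under finite colimits. The only difference is cosmetic: the paper cites the closure fact directly, whereas you re-prove it cell by cell using the commutation of finite limits with filtered colimits in $\categ S$.
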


\begin{proof}
Let us first show that $\categ{Cell}_{M,\leq n}$ is stable through coproducts. Let $A$ and $B$ be finitely $n$-cellular objects and let us consider a sequence of $n$-elementary morphisms
$$
\emptyset = A_0 \xrightarrow{f_0} A_1 
\xrightarrow{f_1} \cdots \xrightarrow{f_{l-1}} A_l=A . 
$$
Then, let us consider the following sequence of morphisms from $B$ to $A \sqcup B$
$$
B = A_0 \sqcup B \xrightarrow{f_0 \sqcup \id} A_1 \sqcup B
\xrightarrow{f_1 \sqcup \id} \cdots \xrightarrow{f_{l-1} \sqcup \id} A_l \sqcup B =A \sqcup B . 
$$
Any of these maps $f_i \sqcup \id$ is $n$-elementary as we have two pushouts that yield another pushout as follows
$$
\begin{tikzcd}
 T_M(S^m_k) 
 \ar[r] \ar[d]
 & A_i
 \ar[r] \ar[d]
 & A_i \sqcup B
 \ar[d]
 \\
 T_M(0)
 \ar[r]
 & A_{i+1}
 \ar[r]
 & A_{i+1} \sqcup B .
\end{tikzcd}
$$

Let us show now that finitely $n$-cellular objects are $\omega$-small. Since the functor $U^M$ preserves filtered colimits, then its adjoint $T_M$ sends $\omega$-small objects to $\omega$-small objects. To conclude, finite colimits of $\omega$-small objects are $\omega$-small.
\end{proof}

\begin{theorem}\label{theoremcellularpresentation}
The $\infty$-category $\categ A$ is canonically equivalent to the full subcategory of $\Psh(\categ{Cell}_{M,\leq n})$ spanned by functors $F$ so that
\begin{enumerate}
    \item $F(\emptyset) \simeq \ast$;
    \item for any pushout square in $\categ C$ of the form
    $$
\begin{tikzcd}
 T_M(S^m_k) 
 \ar[r] \ar[d]
 & A
 \ar[d]
 \\
 T_M(0)
 \ar[r]
 & B
\end{tikzcd}
$$
its image through $F$ is a pullback.
\end{enumerate}
\end{theorem}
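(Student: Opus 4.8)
The plan is to deduce the statement from Theorem \ref{thmmain}, instantiating the context of Subsection \ref{thecontext} with $\categ C = \categ{Ch}_R^K$, with $\categ D = \categ{Loops}_{R,\leq n}^{(K)}$, and with $\categ B = \categ{Cell}_{M,\leq n}$. By Proposition \ref{propositionchainproductpresentation} the $\infty$-category $\categ{Ch}_R^K$ is $\omega$-presentable through $\categ{Loops}_{R,\leq n}^{(K)}$, the functor $i^\ast$ is fully faithful and preserves filtered colimits, and its essential image is exactly the subcategory $\Psh_{\categ{stab}}(\categ{Loops}_{R,\leq n}^{(K)})$ of stable functors; the adjunction $T_M \dashv U^M$ is strongly monadic and $\omega$-accessible by assumption. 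Lemma \ref{lemmacellular} shows that the objects of $\categ{Cell}_{M,\leq n}$ are $\omega$-small and that the subcategory is closed under finite coproducts; being a full subcategory of $\omega$-small objects of the presentable $\infty$-category $\categ A$, it is moreover essentially small. To see that $\categ B$ contains the image of $\categ D$ through $T_M$, I would observe that for any $m \leq n$ and $k \in K$ the suspension pushout $S^m_k = \Sigma S^{m-1}_k$ of $\categ{Ch}_R^K$, i.e. the pushout of $0 \leftarrow S^{m-1}_k \to 0$, is carried by the colimit-preserving functor $T_M$ to the pushout exhibiting $T_M(0) \to T_M(S^m_k)$ as a single $n$-elementary morphism (indeed $m-1 \leq n-1$). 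Hence every $T_M(S^m_k)$ with $m \leq n$ is finitely $n$-cellular, as is $T_M(0) = \emptyset$.

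Next I would take $\categ E$ to be the full subcategory of $\Psh(\categ{Cell}_{M,\leq n})$ cut out by conditions $(1)$ and $(2)$ and check that it satisfies the hypotheses of Subsection \ref{thecontext}. Both conditions are locality conditions against a small set $S$ of maps between $\omega$-small objects of $\Psh(\categ B)$: condition $(1)$ is locality with respect to the map from the initial presheaf to the presheaf represented by $\emptyset$, and for each $n$-elementary pushout square condition $(2)$ is locality with respect to the canonical map from the pushout of the three corresponding representables to the presheaf represented by $B$. By Lemma \ref{lemmapresentatio}, $\categ E$ is then reflective in $\Psh(\categ B)$ and closed under filtered colimits. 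The representable functors $\Mapp{\categ A}{-}{A}$ plainly satisfy $(1)$ and $(2)$, so $\categ E$ contains the image of $\categ A$. The delicate point is that $\categ E \subseteq \Psh_\Sigma(\categ B)$, i.e. that $(1)$ and $(2)$ already force $F$ to send finite coproducts to products. I would prove $F(B_1 \sqcup B_2) \simeq F(B_1) \times F(B_2)$ by induction along a cellular filtration $\emptyset = C_0 \to \cdots \to C_l = B_2$, using the description of $B_1 \sqcup (-)$ on cells from the proof of Lemma \ref{lemmacellular}: each coproduct step $B_1 \sqcup C_i \to B_1 \sqcup C_{i+1}$ is again an $n$-elementary pushout, so condition $(2)$ and $F(\emptyset) \simeq \ast$ identify $F(B_1 \sqcup C_{i+1})$ with the fibre of $F(B_1 \sqcup C_i) \to F(T_M(S^m_k))$; since the attaching map factors through $C_i$, that map factors through the projection onto $F(C_i)$, so the factor $F(B_1)$ splits off the fibre and the induction closes.

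With the context in place, it remains to verify assertion $(2)$ of Theorem \ref{thmmain} for the composite $t^\ast q$, whose value on $F \in \categ E$ is the functor $d \mapsto F(T_M(i(d)))$ on $\categ D$. For the essential-image condition I would show $t^\ast q(F)$ is stable: condition $(1)$ gives $F(\emptyset) \simeq \ast$, while applying $F$ to the image under $T_M$ of the suspension pushout and using $F(T_M(0)) \simeq \ast$ produces a pullback square exhibiting $F(T_M(S^m_k)) \simeq \ast \times_{F(T_M(S^{m-1}_k))} \ast \simeq \Omega F(T_M(S^{m-1}_k))$, which is exactly the equivalence defining stability. For conservativity I would argue by cellular induction: if $F \to G$ in $\categ E$ induces equivalences on every $T_M(S^m_k)$ (and on $\emptyset$), then since $F(C_{i+1})$ and $G(C_{i+1})$ are the fibres of $F(C_i) \to F(T_M(S^m_k))$ and $G(C_i) \to G(T_M(S^m_k))$, an equivalence at stage $i$ propagates to stage $i+1$, so $F \to G$ is an equivalence on all of $\categ{Cell}_{M,\leq n}$. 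Both halves of assertion $(2)$ then hold, and Theorem \ref{thmmain} yields that $l \dashv r$ is an adjoint equivalence, i.e. $\categ A \simeq \categ E$, which is the claim.

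I expect the main obstacle to be the product-preservation step of the second paragraph, namely that conditions $(1)$ and $(2)$ alone force $\categ E \subseteq \Psh_\Sigma(\categ B)$; this is the one place where a locality result does not suffice and one must import the cellular analysis of coproducts from Lemma \ref{lemmacellular}. Once that is secured, everything reduces to recognizing the two relevant operations—taking the cofibre of $T_M(S^{m-1}_k) \to T_M(0)$ and taking fibres of the images of $n$-elementary pushouts—as, respectively, the stabilization witnessing the essential-image condition and the cellular induction witnessing conservativity.
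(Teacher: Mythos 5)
Your proposal is correct and follows essentially the same route as the paper: instantiate Theorem \ref{thmmain} with $\categ C = \categ{Ch}_R^K$, $\categ D$ the loops subcategory, $\categ B = \categ{Cell}_{M,\leq n}$, use Lemma \ref{lemmacellular} and Lemma \ref{lemmapresentatio} to establish the hypotheses, and verify condition (2) of Theorem \ref{thmmain} via stability and cellular induction. In fact you supply explicitly the steps the paper leaves as ``straightforward'' or ``similar arguments'' --- notably the suspension-pushout argument placing $T_M(S^m_k)$ in $\categ{Cell}_{M,\leq n}$, the induction showing conditions (1) and (2) force product preservation, and the conservativity induction --- all of which are sound.
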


\begin{proof}
The following diagram
$$
\begin{tikzcd}
 \categ{Cell}_{M,\leq n}
 \ar[r] \ar[d]
 & \categ{Loops}_{R,\leq n}
 \ar[d]
 \\
 \categ A
 \ar[r]
 & \categ{Ch}_R^K .
\end{tikzcd}
$$
is commutative.
To check that we are in the context of Theorem \ref{thmmain}, we need to check that the full subcategory $\categ{Cell}_{M,\leq n}$ of $\categ A$ is stable through coproducts and that its objects are $\omega$-small in $\categ A$; this follows from Lemma \ref{lemmacellular}. We need moreover to check that the full subcategory of $\Psh(\categ{Cell}_{M,\leq n})$ described in the theorem
is reflective and is stable through filtered colimits; this is given by 
Lemma \ref{lemmapresentatio}.
We notice that this full subcategory contains the image of $\categ A$. Finally, using arguments similar as those used to prove Lemma \ref{lemmacellular}, one can check that any functor in the full subcategory of $\Psh(\categ{Cell}_{M,\leq n})$ described in the theorem preserves finite products.

Thus it suffices to check that condition (2) of Theorem \ref{thmmain} is satisfied, which is straightforward.
\end{proof}

\subsubsection{The cartesian presentation}

Let $n \in \mathbb Z \sqcup \{\infty\}$.

\begin{definition}
Let $\categ{Cart}_{M, \leq n}$ be the smallest full subcategory of $\categ{A}$ that contains $T_M{S_k^m}$ for any $m \leq n$, and $k \in K$ and that is stable through finite coproducts.
\end{definition}

\begin{theorem}\label{theoremcartesianpresentation}
The $\infty$-category $\categ A$ is canonically equivalent to the full subcategory of $\Psh(\categ{Cart}_{M, \leq n})$ spanned by functors $F$ so that
\begin{enumerate}
    \item $F$ preserves finite products;
    \item for any pushout square in $\categ{Cart}_{M, \leq n}$ of the form
    $$
\begin{tikzcd}
 T_M(X) 
 \ar[r] \ar[d]
 & T_M(0)
 \ar[d]
 \\
 T_M(0)
 \ar[r]
 & T_M(\Sigma X)
\end{tikzcd}
$$
its image through $F$ is a pullback.
\end{enumerate}
\end{theorem}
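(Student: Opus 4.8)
The plan is to deduce this statement from Theorem~\ref{thmmain}, following the same template as the proof of Theorem~\ref{theoremcellularpresentation}, with the identifications $\categ C = \categ{Ch}_R^K$, $\categ D = \categ{Loops}_{R,\leq n}^{(K)}$ and $\categ B = \categ{Cart}_{M,\leq n}$. The presentation of $\categ C$ required by the context of Theorem~\ref{thmmain} is the one furnished by Proposition~\ref{propositionchainproductpresentation}, which identifies the essential image of the functor $i^\ast \colon \categ{Ch}_R^K \to \Psh(\categ{Loops}_{R,\leq n}^{(K)})$ with the full subcategory $\Psh_{\categ{stab}}(\categ{Loops}_{R,\leq n}^{(K)})$ of stable functors. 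The comparison functor $t \colon \categ D \to \categ B$ sends $0$ to $T_M(0)$ and $S^m_k$ to $T_M(S^m_k)$, and the relevant square of $\infty$-categories commutes by construction. Finally I take $\categ E$ to be the full subcategory of $\Psh(\categ{Cart}_{M,\leq n})$ described in the statement.

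Next I would verify that this data fits the context preceding Theorem~\ref{thmmain}. The subcategory $\categ B = \categ{Cart}_{M,\leq n}$ is stable under finite coproducts by definition, and its objects are $\omega$-small in $\categ A$: since $U^M$ preserves filtered colimits, $T_M$ sends the $\omega$-small objects $S^m_k$ to $\omega$-small objects, and finite coproducts of $\omega$-small objects are $\omega$-small (as in Lemma~\ref{lemmacellular}). For $\categ E$, condition~(1) of the statement says precisely that its objects lie in $\Psh_{\Sigma}(\categ{Cart}_{M,\leq n})$, while conditions~(1) and~(2) together are locality conditions with respect to a small set $S$ of morphisms of $\Psh(\categ{Cart}_{M,\leq n})$ between $\omega$-small objects (these maps are built from representables and their finite coproducts, which are $\omega$-small); hence Lemma~\ref{lemmapresentatio} shows that the inclusion $\categ E \hookrightarrow \Psh(\categ{Cart}_{M,\leq n})$ is reflective and preserves filtered colimits. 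That $\categ E$ contains the image of every $A \in \categ A$ holds because the presheaf $\Map_{\categ A}(j(-),A)$ turns the finite coproducts of $\categ B$ into finite products (condition~(1)) and turns the displayed pushout squares into pullbacks (condition~(2)), mapping spaces into $A$ carrying colimits to limits.

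With the context in place, it remains to verify assertion~(2) of Theorem~\ref{thmmain} for the composite $t^\ast q \colon \categ E \to \Psh(\categ{Loops}_{R,\leq n}^{(K)})$, which evaluates a functor $F \in \categ E$ on the objects $T_M(0)$ and $T_M(S^m_k)$. Conservativity follows from the fact that every object of $\categ{Cart}_{M,\leq n}$ is a finite coproduct of the objects $T_M(S^m_k)$ (the empty coproduct being $T_M(0)$): since each $F$ preserves finite products, it is determined up to equivalence by its values on these generators, so a morphism of $\categ E$ that becomes an equivalence after $t^\ast q$ is already an equivalence. That $t^\ast q$ lands in the stable functors $\Psh_{\categ{stab}}(\categ{Loops}_{R,\leq n}^{(K)})$, that is in the essential image of $i^\ast$, is the heart of the matter: the equivalence $F(T_M(0)) \simeq \ast$ is the empty instance of product-preservation, while applying condition~(2) of $\categ E$ to the suspension square with $X = S^{m-1}_k$ and using $F(T_M(0)) \simeq \ast$ yields $F(T_M(S^m_k)) \simeq \ast \times_{F(T_M(S^{m-1}_k))} \ast \simeq \Omega\, F(T_M(S^{m-1}_k))$, which is exactly the loop condition defining stability.

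The routine points — $\omega$-smallness, reflectivity of $\categ E$, conservativity — go through as in the cellular case. The step that deserves the most care, and which I expect to be the crux, is pinning down the precise dictionary between the two descriptions of $\categ E$: that conditions~(1) and~(2) of the statement are simultaneously a family of locality conditions between $\omega$-small objects, so that Lemma~\ref{lemmapresentatio} applies, and, after restriction along $t$, exactly equivalent to the defining conditions of $\Psh_{\categ{stab}}(\categ{Loops}_{R,\leq n}^{(K)})$ coming from Proposition~\ref{propositionchainproductpresentation}. Once this translation is fixed, Theorem~\ref{thmmain} delivers the desired equivalence $\categ A \simeq \categ E$.
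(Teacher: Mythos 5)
Your proposal is correct and takes essentially the same approach as the paper: the paper's proof of this theorem is literally ``the same arguments as for Theorem~\ref{theoremcellularpresentation}'', i.e.\ fitting $\categ C = \categ{Ch}_R^K$, $\categ D = \categ{Loops}_{R,\leq n}^{(K)}$, $\categ B = \categ{Cart}_{M,\leq n}$ and the subcategory $\categ E$ into the context of Theorem~\ref{thmmain} via Lemma~\ref{lemmapresentatio} and Proposition~\ref{propositionchainproductpresentation}, then checking condition~(2) of Theorem~\ref{thmmain}. Your write-up additionally spells out the steps the paper dismisses as ``straightforward'' (the locality description of $\categ E$, conservativity via product-preservation on the generators $T_M(S^m_k)$, and the identification of the suspension-square condition with stability), all of which are sound.
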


\begin{proof}
It follows from the same arguments as those used to prove Theorem \ref{theoremcellularpresentation}.
\end{proof}

\subsection{Algebras over an operad}

Let $\categ P$ be a coloured operad enriched in chain complexes and let $K$ be its set of colours. We have an adjunction relating the product over $K$ of the category of chain complexes to the category of $\categ P$-algebras
$$
\begin{tikzcd}
\categ{Chain}_R^K \ar[rr, shift left, "T_P"]
&& \catofalg{\categ P} . \ar[ll, shift left, "U^P"]
\end{tikzcd}
$$

\begin{definition}
The operad $\operad P$ is called admissible if there exists a combinatorial model structure on the category of $\categ P$-algebras transferred  from that of $\categ{Chain}_R^K$, that is
\begin{itemize}
    \itemt the weak equivalences are the morphisms that are colourwise quasi-isomorphisms;
    \itemt the fibrations are the morphisms that are colourwise and degreewise surjections.
\end{itemize}
Then, the adjunction $T_P \dashv U^P$ becomes a Quillen adjunction.
\end{definition}

\begin{definition}
The operad $\operad P$ is $\Sigma$-cofibrant if the underlying $K$-coloured symmetric sequence is cofibrant in the projective model structure on $K$-coloured symmetric sequences in chain complexes.
\end{definition}

\begin{remark}
If $R$ is rational or if $\operad P$ is planar, then $\operad P$ is admissible (see for instance \cite{PavlovScholbach} and \cite{BergerMoerdijk07}).
Moreover, in \cite{Hinich97}, Hinich proves that a monochromatic $\Sigma$-split operad is admissible. However, to the best of my knowledge, the case of $\Sigma$-cofibrant operads in chain complexes is a blindspot in the literature about admissible operads.
\end{remark}

\begin{proposition}\label{propositionconservative}
Let us suppose that $\operad P$ is admissible. Then, the functor
$$
\catofalg{\categ P} [\mathrm{W}^{-1}] \to \categ{Chain}_R^K [\mathrm{W}^{-1}]\simeq  \categ{Ch}_R^K
$$
induced by $U^P$ is conservative.
\end{proposition}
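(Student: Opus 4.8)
The plan is to reduce the statement to the defining property of the transferred model structure, namely that a morphism of $\categ P$-algebras is a weak equivalence \emph{precisely} when its image under $U^P$ is a weak equivalence of $\categ{Chain}_R^K$, and then to combine this with two standard facts about localisations of model categories. First I would record that the $\infty$-functor appearing in the statement is genuinely the one induced by $U^P$ on localisations. Since the fibrations of $\categ{Chain}_R^K$ are the colourwise degreewise surjections, every object of $\categ{Chain}_R^K$ is fibrant; as $U^P$ preserves the terminal object and creates fibrations, every $\categ P$-algebra is fibrant as well. Hence the fibrant replacement functor $F$ from Proposition \ref{propositionQuillenadj} is connected to the identity by a natural weak equivalence, and because $U^P$ preserves \emph{all} weak equivalences (this is immediate from admissibility) the comparison $U^P \to U^P F$ is a pointwise weak equivalence. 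Therefore the right derived functor of $U^P$ coincides, after localisation, with the functor $\bar U^P : \catofalg{\categ P}[\mathrm W^{-1}] \to \categ{Chain}_R^K[\mathrm W^{-1}]$ directly induced by $U^P$, and it is the conservativity of $\bar U^P$ that must be proved.

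Next I would pass to homotopy categories. A morphism of an $\infty$-category is an equivalence if and only if its image in the homotopy category is an isomorphism, and the homotopy category of the localisation $\categ M[\mathrm W^{-1}]$ of a model category is the ordinary homotopy category $\mathrm{Ho}(\categ M)$. Since the localisation functor is surjective on morphisms at the level of homotopy categories, $\bar U^P$ is conservative as an $\infty$-functor if and only if the induced functor $\mathrm{Ho}(U^P) : \mathrm{Ho}(\catofalg{\categ P}) \to \mathrm{Ho}(\categ{Chain}_R^K)$ reflects isomorphisms. So I reduce to an entirely $1$-categorical statement about the homotopy categories of the two model structures.

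To verify that $\mathrm{Ho}(U^P)$ reflects isomorphisms, I would take a morphism $\phi$ of $\mathrm{Ho}(\catofalg{\categ P})$ with $\mathrm{Ho}(U^P)(\phi)$ invertible. After replacing its source and target by cofibrant (hence bifibrant) representatives, $\phi$ is represented by the homotopy class $[g]$ of an honest morphism $g$ of $\categ P$-algebras, and $\mathrm{Ho}(U^P)(\phi) = [U^P(g)]$ because $U^P$ preserves weak equivalences. The conclusion then follows from the \emph{saturation} of weak equivalences in any model category: a morphism becomes invertible in $\mathrm{Ho}$ exactly when it is a weak equivalence. Applied downstairs, invertibility of $[U^P(g)]$ forces $U^P(g)$ to be a weak equivalence; by the definition of the transferred model structure this means $g$ itself is a weak equivalence; and applied upstairs, saturation gives that $\phi = [g]$ is invertible.

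I expect the algebraic content to be effortless, since it is carried entirely by the definitional equivalence \textbf{(}weak equivalence of $\categ P$-algebras $\Leftrightarrow$ colourwise quasi-isomorphism of underlying complexes\textbf{)} together with saturation. The only genuinely delicate point is the bookkeeping in the first two paragraphs: correctly identifying $\bar U^P$ with the derived functor of $U^P$ and matching the $\infty$-categorical notion of equivalence with invertibility in $\mathrm{Ho}$. These are standard, and the observation that all objects are fibrant is what lets me bypass any fibrant-replacement subtlety and treat $\bar U^P$ as the naive induced functor.
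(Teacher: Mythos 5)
Your proof is correct and takes essentially the same route as the paper's: both reduce conservativity to the induced functor on homotopy categories (using that $\mathrm{Ho}$ of the $\infty$-categorical localisation is the classical homotopy category), represent the morphism by an honest map between suitably (co)fibrant objects, and conclude via saturation of weak equivalences combined with the defining property of the transferred model structure. The only divergence is your opening paragraph identifying the induced functor with the right derived functor, which is harmless but unnecessary, since $U^P$ preserves all weak equivalences by definition of the transfer and therefore induces the functor on localisations directly.
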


\begin{proof}
Let us consider the following diagram of $\infty$-categories
$$
\begin{tikzcd}
\catofalg{\categ P} [\mathrm{W}^{-1}]
\ar[r] \ar[d]
& \categ{Chain}_R^K [\mathrm{W}^{-1}]
\ar[d]
\\
\mathrm{Ho}(\catofalg{\categ P})
\ar[r]
& \mathrm{Ho}(\categ{Chain}_R^K) ,
\end{tikzcd}
$$
where the notation $\mathrm{Ho}$ stands for the homotopy category of a model category.
The two vertical functors are conservative. Hence proving that the top horizontal functor is conservative amounts to prove that the bottom horizontal functor is conservative.

Let $f: A \to B$ be a morphism in $\mathrm{Ho}(\catofalg{\categ P})$ that is sent to an isomorphism in $\mathrm{Ho}(\categ{Chain}_R^K)$. We can suppose that $A$ is cofibrant and that $B$ is fibrant. Then $f$ proceeds from a morphism $f' : A \to B$ in $\catofalg{\categ P}$. Then,
$U^M (f')$ becomes an isomorphism in $\mathrm{Ho}(\categ{Chain}_R^K)$. Thus, general properties of model categories ensures us that $U^M (f')$ is a weak equivalence. Hence $f'$ is a weak equivalence. So $f$ is an isomorphism. Thus the right derived functor from $\mathrm{Ho}(\catofalg{\categ P})$ to $\mathrm{Ho}(\categ{Chain}_R^K)$ is conservative.
\end{proof}

\begin{proposition}\cite[Appendix A]{HarpazNuitenPrasma}\label{propositionsifted}
Let us suppose that $\operad P$ is $\Sigma$-cofibrant and admissible. Then, the functor
$$
\catofalg{\categ P}[\mathrm W^{-1}] \to \categ{Ch}_R^K
$$
induced by $U^P$
preserves sifted colimits.
\end{proposition}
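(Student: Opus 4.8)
The plan is to reduce the claim to the separate preservation of filtered colimits and of geometric realisations. Indeed, the $\infty$-category $\catofalg{\categ P}[\mathrm{W}^{-1}]$ is presentable by Proposition \ref{propositioncombi}, hence cocomplete, so that by \cite[Corollary 5.5.8.17]{Lurie17} the functor induced by $U^P$ preserves sifted colimits if and only if it preserves both filtered colimits and geometric realisations. I would treat these two cases in turn.

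For filtered colimits, I would argue at the level of model categories and invoke Lemma \ref{lemmapreservefiltered}. The point-set forgetful functor $U^P : \catofalg{\categ P} \to \categ{Chain}_R^K$ preserves weak equivalences by the very definition of the transferred model structure, and it preserves filtered colimits because the associated monad $U^P T_P$ on $\categ{Chain}_R^K$ is computed arity by arity from finite tensor products, hence is finitary. Since quasi-isomorphisms are stable through filtered colimits in $\categ{Chain}_R^K$ --- and hence also in $\catofalg{\categ P}$, where filtered colimits are created colourwise and degreewise on underlying complexes --- Lemma \ref{lemmapreservefiltered} applies with $\kappa = \omega$ and shows that the induced functor $\catofalg{\categ P}[\mathrm{W}^{-1}] \to \categ{Ch}_R^K$ preserves filtered colimits.

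The preservation of geometric realisations is the heart of the matter, and this is the step where I expect the real difficulty and where the $\Sigma$-cofibrancy of $\categ P$ becomes indispensable. By Proposition \ref{propositionhomotopycolim} and Proposition \ref{propositioncatfunccolim}, a geometric realisation in $\catofalg{\categ P}[\mathrm{W}^{-1}]$ is computed as the homotopy colimit of a simplicial object in $\catofalg{\categ P}$; replacing it by a Reedy cofibrant simplicial object $A_\bullet$ (which, since $U^P$ preserves weak equivalences, alters neither side), this homotopy colimit is the strict realisation $|A_\bullet| = \colim_m \mathrm{sk}_m |A_\bullet|$. It then suffices to show that $U^P$ carries this skeletal filtration to a filtration of chain complexes computing the homotopy colimit of $U^P(A_\bullet)$. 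The successive stages $\mathrm{sk}_{m-1}|A_\bullet| \to \mathrm{sk}_m |A_\bullet|$ are pushouts in $\catofalg{\categ P}$ along the latching cofibrations of $A_\bullet$, so everything reduces to understanding the underlying complex of a pushout of $\categ P$-algebras along a cofibration. Such a pushout does not commute with $U^P$ on the nose --- $U^P$ is only a right adjoint --- but its underlying complex is the colimit of a finite filtration whose layers are pushouts in $\categ{Chain}_R^K$ built from the symmetric tensor powers $\categ P(r) \otimes_{\Sigma_r} (-)^{\otimes r}$ of the operad (summed over colour-profiles). The hypothesis that $\categ P$ is $\Sigma$-cofibrant is exactly what guarantees that these layers are pushouts along cofibrations computing the derived, rather than merely strict, coinvariants, so that the whole filtration is homotopically meaningful and its colimit computes the homotopy colimit in $\categ{Chain}_R^K$.

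The main obstacle is therefore the construction and homotopical analysis of this bar-type filtration of an algebra pushout, which is the technical content of \cite[Appendix A]{HarpazNuitenPrasma} and which I would import. Conceptually, the reason the argument can succeed is that $\Delta^\op$ is sifted: for any sifted $\categ I$ the diagonal $\categ I \to \categ I^{\times r}$ is cofinal, so the tensor power functors $(-)^{\otimes r}$ --- and hence the layers of the filtration --- commute with sifted colimits once they are derived correctly, which is precisely what $\Sigma$-cofibrancy provides.
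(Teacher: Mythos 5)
The paper offers no proof of this proposition at all: it is imported wholesale from \cite[Appendix A]{HarpazNuitenPrasma}, so there is no internal argument to measure you against, and your proposal in fact does strictly more than the paper does. Your reduction is legitimate: admissibility makes $\catofalg{\categ P}$ combinatorial, so $\catofalg{\categ P}[\mathrm{W}^{-1}]$ is presentable by Proposition \ref{propositioncombi}, hence cocomplete, and the splitting of sifted colimits into filtered colimits plus geometric realisations via \cite[Corollary 5.5.8.17]{Lurie17} is exactly the form in which the paper itself uses that corollary elsewhere. Your filtered-colimit half is a genuine, self-contained argument within the paper's toolkit: $U^P$ preserves weak equivalences by definition of the transferred structure, preserves filtered colimits because the operadic monad is finitary (so that filtered colimits of algebras are created on underlying complexes), and quasi-isomorphisms are stable under filtered colimits, so Lemma \ref{lemmapreservefiltered} applies with $\kappa=\omega$; you also correctly observe that $\Sigma$-cofibrancy plays no role in this half. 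The geometric-realisation half --- Reedy cofibrant replacement, the skeletal filtration, and above all the filtration of a pushout of $\categ P$-algebras along a cofibration whose layers involve $\categ P(r)\otimes_{\Sigma_r}(-)^{\otimes r}$ and for which $\Sigma$-cofibrancy ensures that strict coinvariants compute derived ones --- is precisely the technical content of \cite[Appendix A]{HarpazNuitenPrasma}, and you defer it there, which is the same move the paper makes for the entire statement (your sketch of the skeletal filtration is slightly loose, since realisations in a non-simplicial model category require framings, but this is subsumed in what you import). In short: the paper's citation buys brevity; your route buys a correct isolation of the easy half as an internal argument and a precise identification of where the external input and the $\Sigma$-cofibrancy hypothesis are actually needed. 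Neither is more self-contained on the crucial point.
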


\begin{corollary}
In the case where $\categ P$ is $\Sigma$-cofibrant and admissible, then the adjunction
$$
\begin{tikzcd}
\categ{Ch}^K_R \ar[rr, shift left]
&& \catofalg{\categ P}[\mathrm{W^{-1}}]
\ar[ll, shift left]
\end{tikzcd}
$$
induced by the Quillen adjunction $T_P \dashv U^P$ is strongly monadic and $\omega$-accessible.
\end{corollary}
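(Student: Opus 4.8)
The plan is simply to assemble the three preceding results—Proposition~\ref{propositionQuillenadj}, Proposition~\ref{propositionconservative} and Proposition~\ref{propositionsifted}—and to check that their conjunction is exactly what the two notions in the statement demand. First I would use admissibility: by hypothesis the transferred combinatorial model structure on $\catofalg{\categ P}$ exists, so $T_P \dashv U^P$ is a Quillen adjunction between combinatorial model categories. Proposition~\ref{propositionQuillenadj} then produces the induced adjunction
$$
\begin{tikzcd}
\categ{Ch}_R^K \ar[rr, shift left, "T_M"] && \catofalg{\categ P}[\mathrm{W}^{-1}] \ar[ll, shift left, "U^M"]
\end{tikzcd}
$$
where $U^M$ is the functor derived from $U^P$ (and $T_M$ the one derived from $T_P$, using the identification $\categ{Chain}_R^K[\mathrm{W}^{-1}] \simeq \categ{Ch}_R^K$). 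In particular $U^M$ is a right adjoint, which is the common starting requirement of both ``strongly monadic'' and ``$\omega$-accessible''.

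Next I would dispose of the conditions on the \emph{source} of $U^M$. Since the model category $\catofalg{\categ P}$ is combinatorial, Proposition~\ref{propositioncombi} shows that $\catofalg{\categ P}[\mathrm{W}^{-1}]$ is $\kappa$-presentable for some regular cardinal $\kappa$, hence complete and cocomplete. In particular it admits all geometric realisations and all filtered colimits. This settles the requirement in the definition of a strongly monadic functor that $\categ A$ have geometric realisations, as well as the requirement in the definition of an $\omega$-accessible adjunction that the source of the right adjoint have filtered colimits.

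The remaining conditions all concern the functor $U^M$ itself. Its conservativity is precisely Proposition~\ref{propositionconservative}. Its preservation of sifted colimits is Proposition~\ref{propositionsifted}; since both geometric realisations (colimits over $\Delta^\op$) and filtered colimits are sifted, this one statement simultaneously yields that $U^M$ preserves geometric realisations—the last ingredient of strong monadicity—and that it preserves filtered colimits—the last ingredient of $\omega$-accessibility. Collecting everything: $U^M$ is a right adjoint, conservative functor whose source has geometric realisations and which preserves them, hence strongly monadic; and it preserves filtered colimits out of a source having them, hence the adjunction is $\omega$-accessible.

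I do not expect a genuine obstacle here, since all the mathematical substance is already packaged in the cited propositions; the proof is essentially a matching of definitions. If anything, the only point deserving care is the bookkeeping observation that ``strongly monadic $+$ $\omega$-accessible'' unwinds, for a right adjoint out of a cocomplete category, into exactly ``conservative $+$ preserves sifted colimits'', so that Propositions~\ref{propositionconservative} and~\ref{propositionsifted} together cover every clause. The hypotheses on $\categ P$ are what make these two inputs available: admissibility furnishes the transferred model structure and the conservativity statement, while $\Sigma$-cofibrancy is what guarantees that $U^P$ commutes with sifted homotopy colimits.
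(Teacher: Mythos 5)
Your proof is correct and takes essentially the same route as the paper: the paper's own proof consists of citing Proposition \ref{propositionconservative} and Proposition \ref{propositionsifted}, which are exactly the two substantive inputs you identify. Your additional bookkeeping—invoking Proposition \ref{propositionQuillenadj} for the existence of the derived adjunction and Proposition \ref{propositioncombi} for completeness and cocompleteness of the source, then observing that sifted colimits subsume both geometric realisations and filtered colimits—merely makes explicit what the paper leaves implicit.
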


\begin{proof}
This follows from Proposition \ref{propositionconservative} and Proposition \ref{propositionsifted}
\end{proof}

Thus, in that case where $\categ P$ is $\Sigma$-cofibrant and admissible, one can apply Theorem \ref{theoremcellularpresentation} or Theorem \ref{theoremcartesianpresentation} to get presentations of the $\infty$-category $\catofalg{\categ P}[\mathrm{W}^{-1}]$.
One can notice that for any $n \in \mathbb Z \sqcup \{\infty\}$:
\begin{enumerate}
    \item an object $A \in \catofalg{\categ P}[\mathrm{W}^{-1}]$ is finitely $n$-cellular if and only if it is equivalent to the image of an element $A'$ in the category $\catofalg{\categ P}$ so that there exists a finite sequence of morphisms in this category
    $$
    A_0 \xrightarrow{f_0} A_1 \xrightarrow{f_1} \cdots \xrightarrow{f_{l-1}} A_l = A'
    $$
    so that any morphism $f_i$ is a pushout of the form
    $$
    \begin{tikzcd}
    T_M(S^m_k) 
    \ar[r] \ar[d, hookrightarrow]
    & A_i
    \ar[d]
    \\
    T_M(D^{m+1}_k)
    \ar[r]
    & A_{i+1}
    \end{tikzcd}
    $$
    for some $m \leq n-1$ and $k \in K$;
    \item an object $A \in \catofalg{\categ P}[\mathrm{W}^{-1}]$ belongs to $\categ{Cart}_{P, \leq n}$ if and only if it is equivalent to the image of an object in the category $\catofalg{\categ P}$ of the form $T_P (X)$
    where $X$ is the sum of a finite set of objects $S^m_k$ for some $m \leq n$ and some $k \in K$.
\end{enumerate}

\subsection{A word about formal moduli problems}

We end this section by informally describing  relations between the cellular presentation introduced above and formal moduli problems.
Let $\operad P$ be an admissible $\Sigma$-cofibrant $K$-coloured operad in chain complexes.

The operad $\operad P$ is called augmented if the unit morphism of operads $1_K \to \operad P$ has a left inverse $\epsilon$. This yields an adjunction
$$
\begin{tikzcd}
{\catofalg{\categ{P}}[\mathrm{W}^{-1}]} \ar[rr, shift left, "\epsilon_!"]
&& \categ{Ch}_R^K .
\ar[ll, shift left, "\epsilon^\ast"]
\end{tikzcd}
$$
Koszul duality techniques provide us with an operad $\categ P^!$ so that the left adjoint functor $\epsilon_!$ factorises through a functor from $\categ P$-algebras to $\categ P^!$-coalgebras called the Bar construction. Using also linear duality that sends $\categ P^!$-coalgebras to $\categ P^!$-algebras, we obtain the following sequence of functors
$$
{\catofalg{\categ{P}}[\mathrm{W}^{-1}]} 
\xrightarrow{\mathrm{Bar}}
 \catofcog{\categ{P}^!}[\mathrm{W}^{-1}]
\to
\catofalg{\categ{P}^!}[\mathrm{W}^{-1}]^\op .
$$
In some good cases described in \cite{Lurie11bis}, \cite{Pridham}, \cite{BrantnerMathew} and \cite{CalaqueCamposNuiten}, the composite functor induces a contravariant equivalence between the full subcategory $\categ{Cell}_{P,\leq O}$ of ${\catofalg{\categ{P}}[\mathrm{W}^{-1}]}$ spanned by finitely $0$-cellular objects and a full subcategory
$\categ{Art}_{P^!}$ of $\catofalg{\categ{P}^!}[\mathrm{W}^{-1}]$ spanned by objects that are called artinian and that are obtained using finitely many square zero extensions starting from the zero algebra.

If $R$ is a characteristic zero field, such results may also be proven using techniques specific to operads in chain complexes over such a field as described for instance in \cite{LodayVallette12}.

\appendix

\section{Chain complexes in non negative degrees}

In this appendix, we describe a presentation of the $\infty$-category of chain complexes in non negative degrees and of some $\infty$-categories of algebras in such chain complexes. 

\subsection{Projectively generated infinity categories}

We recall here a result of Lurie about projectively generated $\infty$-categories that is a special case of the Theorem \ref{thmmain}.

Let $\categ D$ be a small $\infty$-category with finite coproducts and let us consider an adjunction
$$
\begin{tikzcd}
\Psh_{\Sigma}(\categ D) \ar[rr, shift left, "T_M"]
&& \categ A , \ar[ll, shift left, "U^M"]
\end{tikzcd}
$$
that is strongly monadic and $\omega$-accessible. Moreover, let $\categ B$ be the full subcategory of $\categ A$ spanned by objects that are images of the functor $\categ D \hookrightarrow \Psh_{\Sigma}(\categ D) \to \categ A$. In particular $\categ B$ is stable under finite coproducts.

\begin{proposition}\cite[Corollary 4.7.3.18]{Lurie18}\label{propositionconnectivecase}
The functor $\categ A \to \Psh_{\Sigma}(\categ B)$ is an equivalence.
\end{proposition}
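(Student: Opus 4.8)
The plan is to exhibit this as the special case of Theorem~\ref{thmmain} in which $\categ C = \Psh_\Sigma(\categ D)$ and the reflective subcategory $\categ E$ is taken to be all of $\Psh_\Sigma(\categ B)$, and then to verify condition~(2) of that theorem. First I would instantiate the context of~\ref{thecontext}: the small category there is our $\categ D$, and the first adjunction is the reflective localisation of $\Psh(\categ D)$ onto $\Psh_\Sigma(\categ D)$, whose right adjoint $i^\ast$ is the fully faithful inclusion and preserves filtered colimits by the Example following Lemma~\ref{lemmapresentatio}. This makes $\categ C = \Psh_\Sigma(\categ D)$ into an $\omega$-presentable $\infty$-category with $i_! i^\ast \to \id$ an equivalence, where $i : \categ D \to \categ C$ is the Yoneda embedding (valued in product-preserving presheaves, since every representable preserves finite products). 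The second adjunction is the given strongly monadic, $\omega$-accessible adjunction $T_M \dashv U^M$.

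Next I would check the hypotheses on $\categ B$ and $\categ E$. The composite $\categ D \to \Psh_\Sigma(\categ D) \xrightarrow{T_M} \categ A$ preserves finite coproducts: the embedding $\categ D \to \Psh_\Sigma(\categ D)$ does (a short Yoneda computation identifies $i_!(h_{d} \sqcup h_{d'})$ with the representable $h_{d \sqcup d'}$), and $T_M$, being a left adjoint, preserves all colimits. Hence its essential image $\categ B$ is stable under finite coproducts and the induced functor $t : \categ D \to \categ B$ preserves finite coproducts and is essentially surjective. Moreover the representables are $\omega$-small in $\Psh_\Sigma(\categ D)$ (filtered colimits there are computed as in $\Psh(\categ D)$, hence objectwise), and since $U^M$ preserves filtered colimits its left adjoint $T_M$ carries $\omega$-small objects to $\omega$-small objects; thus the objects of $\categ B$ are $\omega$-small in $\categ A$. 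Taking $\categ E = \Psh_\Sigma(\categ B)$, the hypotheses of the context hold trivially: $\categ E$ contains the image of $\categ A$, which lands in $\Psh_\Sigma(\categ B)$ by the smallness just checked, and the inclusion $\categ E \hookrightarrow \Psh_\Sigma(\categ B)$ is the identity.

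With the context in place, the right adjoint $r : \categ A \to \categ E$ is precisely the restricted-Yoneda functor $\categ A \to \Psh_\Sigma(\categ B)$ of the statement, so by Theorem~\ref{thmmain} it is enough to verify condition~(2) for the composite $t^\ast \circ q$, where $q : \Psh_\Sigma(\categ B) \hookrightarrow \Psh(\categ B)$ is the inclusion and $t^\ast$ is restriction along $t$. For $F \in \categ E$ the presheaf $t^\ast q(F) = F \circ t$ carries finite coproducts in $\categ D$ to finite products in $\categ S$, because $F$ preserves finite products and $t$ preserves finite coproducts; hence $t^\ast q$ lands in $\Psh_\Sigma(\categ D)$, which is exactly the essential image of $i^\ast$. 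Conservativity of $t^\ast q$ is immediate once $t$ is essentially surjective: a morphism $f$ of $\categ E$ with $f \circ t$ invertible is an objectwise equivalence on all of $\categ B$, hence an equivalence. Theorem~\ref{thmmain} then yields that $l \dashv r$ is an adjoint equivalence, which is the claim.

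The only place where the special structure of this situation really enters---and the only step needing care---is that here $\categ B$ is the full essential image of $\categ D$ rather than merely a subcategory containing it; this is what makes $t$ essentially surjective and collapses the conservativity half of condition~(2) to a triviality, in sharp contrast with the general setting of Theorem~\ref{thmmain} (and with the cellular and cartesian presentations, where $\categ B$ is genuinely larger). The remaining ingredients---preservation of finite coproducts by $\categ D \to \Psh_\Sigma(\categ D)$ and compactness of representables---are standard facts about $\Psh_\Sigma$, so beyond correctly assembling these verifications I do not expect a genuine obstacle.
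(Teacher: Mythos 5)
Your proof is correct and takes essentially the same route as the paper: both instantiate Theorem~\ref{thmmain} with $\categ C = \Psh_{\Sigma}(\categ D)$ and $\categ E = \Psh_{\Sigma}(\categ B)$, and both verify condition~(2) by using that $t : \categ D \to \categ B$ is essentially surjective (giving conservativity of $t^\ast \circ q$) and that restriction along the coproduct-preserving $t$ lands in $\Psh_{\Sigma}(\categ D)$, the essential image of $i^\ast$. You are merely more explicit than the paper about checking the standing hypotheses ($\omega$-smallness of objects of $\categ B$, coproduct stability), which the paper takes for granted from its setup.
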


\begin{proof}
We are in a the context of Theorem \ref{thmmain} where $\categ C = \Psh_{\Sigma}(\categ D)$ and
$\categ E = \Psh_{\Sigma}(\categ B)$. Thus it suffices to prove that the functor $\Psh_{\Sigma}(\categ B) \to \Psh_{\Sigma}(\categ D)$
is conservative. This functor fits in the following square diagram
$$
\begin{tikzcd}
\Psh(\categ B)
\ar[r]
& \Psh(\categ D)
\\
\Psh_{\Sigma}(\categ B)
\ar[r] \ar[u]
& \Psh_{\Sigma}(\categ D) .
\ar[u]
\end{tikzcd}
$$
Since the functor $\categ D \to \categ B$ is surjective on objects, then the induced functor $\Psh(\categ B)
\to \Psh(\categ D)$ is conservative. Moreover, the two vertical functors are conservative. Thus the functor
$\Psh_{\Sigma}(\categ B) \to \Psh_{\Sigma}(\categ D)$
is also conservative.
\end{proof}

\subsection{Chain complexes}

\begin{definition}
Let $\categ{Chain}_R^{\geq 0}$ be the category of chain complexes in non negative degrees and let $\categ{Chain}_R^{\geq 0, K}$ be its product over a set $K$
$$
\categ{Chain}_R^{\geq 0, K} = \prod_{k \in K} \categ{Chain}_R^{\geq 0}.
$$
We denote $R^n_k$ for $n\geq 0$ and $k \in K$ the object of this category whose $k$-component is $R^n$ and whose $k'$-component is zero for any other $k' \neq k$.
\end{definition}

\begin{proposition}\cite{Hovey99}
The category $\categ{Chain}_R^{\geq 0, K}$ admits a model structure so that
\begin{itemize}
    \itemt weak equivalences are given by colourwise quasi-isomorphisms;
    \itemt fibrations are colourwise degreewise surjections in positive degrees;
    \itemt cofibrations are morphisms whose image in $\categ{Chain}_R^K$ is a cofibration.
\end{itemize}
\end{proposition}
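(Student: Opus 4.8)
The plan is to reduce to the case $K = \ast$ and there invoke the recognition theorem for cofibrantly generated model categories, exactly as in Hovey's treatment. First I would treat $K = \ast$, that is, establish the projective model structure on $\categ{Chain}_R^{\geq 0}$. The category of non-negatively graded chain complexes is complete and cocomplete, and quasi-isomorphisms satisfy the two-out-of-three property and are stable under retracts. I would then take as generating cofibrations the inclusions $S^{n-1} \to D^n$ for $n \geq 1$ together with the map $0 \to S^0$, and as generating acyclic cofibrations the maps $0 \to D^n$ for $n \geq 1$; note that the bottom generator $0 \to S^0$ is precisely what remains of the unbounded generator $S^{-1} \to D^0$ after truncation. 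The domains of all these maps are finitely presented, hence small, so both sets permit the small object argument.

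Next I would identify the two classes of injectives. A direct computation shows that a morphism has the right lifting property against the generating acyclic cofibrations $0 \to D^n$ exactly when it is surjective in every positive degree; these are the claimed fibrations. The key obstacle is the remaining compatibility condition of the recognition theorem: a map is an acyclic fibration, i.e. has the right lifting property against the generating cofibrations, if and only if it is simultaneously a fibration and a quasi-isomorphism. One direction is formal, but for the converse I would argue degree by degree that a positive-degree surjection which is also a quasi-isomorphism admits compatible lifts against each $S^{n-1} \to D^n$; the only delicate point is the bottom degree, where fibrations are not required to be surjective and the lift in degree $0$ must instead be produced from the isomorphism on $H_0$ together with the lifting data already constructed in higher degrees. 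This homological lemma is the heart of the argument.

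Once this compatibility is verified, the recognition theorem produces the model structure on $\categ{Chain}_R^{\geq 0}$ with the $\textbf{I}$-cofibrations as cofibrations and the positive-degree surjections as fibrations, and the concrete description of cofibrations follows by comparing the generators $S^{n-1} \to D^n$ and $0 \to S^0$ with the generating cofibrations of the unbounded structure: a retract of an $\textbf{I}$-cell complex maps to a degreewise injection with degreewise projective cokernel, which is precisely a cofibration in $\categ{Chain}_R^K$. Finally, for general $K$ the category $\categ{Chain}_R^{\geq 0, K}$ is the product over $K$ of copies of $\categ{Chain}_R^{\geq 0}$, so it inherits the product model structure in which fibrations, cofibrations and weak equivalences are all detected colourwise, matching the stated description and repeating verbatim the argument already used for $\categ{Chain}_R^K$.
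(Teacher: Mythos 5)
Your proposal is correct and takes essentially the same route as the paper: the paper simply delegates the one-colour case to \cite{Hovey99}, whose recognition-theorem argument (generators $S^{n-1}\to D^n$, $0\to S^0$, $0\to D^n$, the degree-zero surjectivity lemma, and the identification of cofibrations as degreewise injections with degreewise projective cokernel) is exactly what you reconstruct. Your final reduction of the general $K$ case to the componentwise product model structure is also precisely the argument the paper gives for its unbounded analogue $\categ{Chain}_R^K$.
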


\begin{definition}
Let $\categ{Ch}_R^{\geq 0, K}$ the localisation of the category $\categ{Chain}_R^{\geq 0, K}$ at weak equivalences:
$$
    \categ{Ch}_R^{\geq 0, K} = \categ{Chain}_R^{\geq 0, K}[\mathrm{W}^{-1}].
$$
\end{definition}

From Lemma \ref{lemmalocalisationproduct}, we know that $\categ{Ch}_R^{\geq 0, K}$ is canonically equivalent to the product over $K$ of the $\infty$-category $\categ{Ch}_R^{\geq 0}=\categ{Ch}_R^{\geq 0, \ast}$.

\begin{definition}
Let $\categ{Cart}^{(K)}_R$ be the full subcategory of $\categ{Chain}_R^{\geq 0, K}$ spanned by finite sums of the objects $R^n_k$ for $n>0$ and $k \in K$. In particular, it is stable through finite coproducts.
\end{definition}

\begin{proposition}
The $\infty$-category $\categ{Ch}_R^{\geq 0}$ is  equivalent to $\Psh_\Sigma(\categ{Cart}^{(K)}_R)$.
\end{proposition}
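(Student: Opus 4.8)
The plan is to apply Proposition \ref{propositionconnectivecase} to the colourwise free $R$-module adjunction, whose base $\infty$-category is $\categ S^K$. I first record that $\categ S^K \simeq \Psh_{\Sigma}(\categ{Fin}_K)$, where $\categ{Fin}_K$ is the small category of finite $K$-coloured sets; this category has finite coproducts, and the equivalence holds because a finite-product-preserving presheaf on $\categ{Fin}_K$ is determined by its values on the monochromatic singletons $\{k\}$, $k \in K$. The adjunction I would use is
$$
\begin{tikzcd}
\categ S^K \ar[rr, shift left, "T_M"] && \categ{Ch}_R^{\geq 0, K} \ar[ll, shift left, "U^M"]
\end{tikzcd}
$$
obtained through Proposition \ref{propositionQuillenadj} from the Quillen adjunction between $\categ{sSet}^K$ and $\categ{Chain}_R^{\geq 0, K}$ given colourwise by the free simplicial $R$-module functor (composed with normalisation via Dold--Kan) against the underlying-space functor $U^M$.

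Next I would check that $U^M$ is strongly monadic and $\omega$-accessible. Conservativity holds because a map of connective chain complexes is an equivalence exactly when it is a homology isomorphism, and under Dold--Kan the homology groups are the homotopy groups of the underlying space; hence $U^M$ detects equivalences. Preservation of filtered colimits follows from Lemma \ref{lemmapreservefiltered}, since the point-set forgetful functor from simplicial $R$-modules to $\categ{sSet}$ preserves filtered colimits and weak equivalences are stable under filtered colimits on both sides.

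The main obstacle is to show that $U^M$ preserves all geometric realisations. At the point-set level, the forgetful functor from simplicial $R$-modules to $\categ{sSet}$ creates sifted colimits---this is the levelwise instance of the classical fact that $\mathrm{Mod}_R \to \categ{Set}$ creates sifted colimits---and it preserves weak equivalences. It also preserves Reedy cofibrations of simplicial objects, because a cofibration of $R$-modules forgets to a monomorphism, hence to a cofibration of simplicial sets. Therefore it carries the model-categorical realisation of a Reedy-cofibrant simplicial object, which computes the homotopy colimit, to a realisation computing the homotopy colimit of the image; by Proposition \ref{propositionhomotopycolim} the functor $U^M$ preserves geometric realisations. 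Combined with the preservation of filtered colimits, $U^M$ preserves all sifted colimits, so $T_M \dashv U^M$ is strongly monadic and $\omega$-accessible.

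Finally I would identify the subcategory $\categ B$ of $\categ{Ch}_R^{\geq 0, K}$ spanned by the images of $\categ{Fin}_K$. Under the equivalence $\Psh_{\Sigma}(\categ{Fin}_K) \simeq \categ S^K$, the Yoneda image of a finite $K$-coloured set $S$ corresponds to the tuple of discrete sets $(S_k)_{k \in K}$, and $T_M$ sends it to $\bigoplus_{k \in K} R^{|S_k|}_k$, a finite coproduct of objects $R^n_k$ concentrated in degree $0$. As $S$ varies these exhaust precisely the objects of $\categ{Cart}^{(K)}_R$, so $\categ B = \categ{Cart}^{(K)}_R$. Proposition \ref{propositionconnectivecase} then yields the equivalence $\categ{Ch}_R^{\geq 0, K} \simeq \Psh_{\Sigma}(\categ{Cart}^{(K)}_R)$ asserted by the proposition.
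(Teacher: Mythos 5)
Your strategy is genuinely different from the paper's proof. The paper stays at the level of model categories: it equips product-preserving simplicial presheaves on $\categ{Cart}^{(K)}_R$ with Quillen's model structure for simplicial algebras over a Lawvere theory, identifies the associated $\infty$-category with $\Psh_\Sigma(\categ{Cart}^{(K)}_R)$ by \cite[Corollary 5.5.9.3]{Lurie17}, and then invokes the Dold--Kan correspondence as an equivalence of model categories with $\categ{Chain}_R^{\geq 0,K}$. You instead feed the free--forgetful adjunction between $\categ S^K \simeq \Psh_\Sigma(\categ{Fin}_K)$ and $\categ{Ch}_R^{\geq 0,K}$ into Proposition \ref{propositionconnectivecase}, i.e.\ you take the monadicity route. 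That is a legitimate alternative: your treatment of conservativity, of filtered colimits via Lemma \ref{lemmapreservefiltered}, and of the identification $\categ B \simeq \categ{Cart}^{(K)}_R$ (the mapping spaces between the objects $\bigoplus_k R^{|S_k|}_k$ are discrete, so this full sub-$\infty$-category really is the ordinary category $\categ{Cart}^{(K)}_R$) are all sound, and your argument makes the Lawvere-theoretic content explicit where the paper outsources it to two references.

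However, your verification that $U^M$ preserves geometric realisations has a genuine gap. ``Creation of sifted colimits'' by the point-set forgetful functor concerns $1$-categorical colimits, and the $1$-categorical colimit of a simplicial object over $\Delta^\op$ is merely the reflexive coequalizer of $X_1 \rightrightarrows X_0$; that is not the homotopy colimit, Reedy cofibrant or not. What you actually need is that $U$ commutes with the model-categorical realisation, i.e.\ with the coend $\int^n \Delta^n \otimes X_n$ formed using the simplicial tensoring, and this does not follow from creation of sifted colimits, because $U$ does not commute with the tensoring: in simplicial $R$-modules $(\Delta^n \otimes X_n)_m$ is a direct sum $\bigoplus_{(\Delta^n)_m}(X_n)_m$, whereas in simplicial sets it is the product $(\Delta^n)_m \times U(X_n)_m$. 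The statement you want is nevertheless true, by the standard diagonal argument: by the co-Yoneda lemma the realisation coend of a bisimplicial object equals its diagonal, both in simplicial $R$-modules and in simplicial sets; the diagonal commutes with $U$ on the nose; every bisimplicial set is Reedy cofibrant (so your clause about preservation of Reedy cofibrations is not needed on the target side); and the diagonal preserves levelwise weak equivalences, so after a Reedy cofibrant replacement it computes the homotopy colimit over $\Delta^\op$ in both categories. Replacing your ``creates sifted colimits, therefore\dots'' sentence by this diagonal argument closes the gap, and with that repair the rest of your proof goes through.
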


\begin{proof}
Let $\categ{Fun}_\Sigma\left((\categ{Cart}^{(K)}_R)^\op, \categ{sSet}\right)$ the full subcategory of $\Fun{(\categ{Cart}^{(K)}_R)^\op}{\categ{sSet}}$ spanned by functors that preserves finite products.
We know from \cite{Quillen67} that it admits a combinatorial model structure whose weak equivalences are objectwise Kan-Quillen weak equivalence and whose fibrations are objectwise Kan-Quillen fibrations. Moreover, we know from \cite[Corollary 5.5.9.3]{Lurie17} that the
resulting $\infty$-category is equivalent to $\Psh_\Sigma(\categ{Cart}^{(K)}_R)$.

To conclude, the Dold Kan correspondence, gives us
equivalences of model categories
$$
\categ{Fun}_\Sigma\left((\categ{Cart}^{(K)}_R)^\op, \categ{sSet}\right) \simeq \catofmod{R}^{\Delta^\op \times K} \simeq \categ{Chain}_R^{\geq 0, K} .
$$
\end{proof}

\subsection{Algebras}

Let us consider an adjunction.
$$
\begin{tikzcd}
\categ{Ch}_{R}^{\geq 0,K} \ar[rr, shift left, "T_M"]
&& \categ{A} . \ar[ll, shift left, "U^M"]
\end{tikzcd}
$$
that is $\omega$-accessible and strongly monadic,
and let $\categ{Cart}_{M}$ be the smallest full subcategory $\categ A$ that contains the objects  $T_M(R_k)$ and that is stable through finite coproducts.

\begin{corollary}
The $\infty$-category $\categ A$ is canonically equivalent to $\Psh_\Sigma(\categ{Cart}_{M})$.
\end{corollary}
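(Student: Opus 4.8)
The plan is to exhibit the present situation as a direct instance of Proposition \ref{propositionconnectivecase}, so that the conclusion follows with essentially no further work. First I would invoke the equivalence $\categ{Ch}_R^{\geq 0, K} \simeq \Psh_\Sigma(\categ{Cart}^{(K)}_R)$ of the preceding proposition in order to rewrite the source of the given adjunction. Setting $\categ D = \categ{Cart}^{(K)}_R$, which is small and stable under finite coproducts by its very definition, the adjunction $T_M \dashv U^M$ becomes a strongly monadic, $\omega$-accessible adjunction $\Psh_\Sigma(\categ D) \to \categ A$. This is precisely the standing hypothesis of Proposition \ref{propositionconnectivecase}.

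The one point that requires attention is the identification of the subcategory $\categ B$ of that proposition --- the full subcategory of $\categ A$ spanned by the images of the composite $\categ D \hookrightarrow \Psh_\Sigma(\categ D) \to \categ A$ --- with $\categ{Cart}_M$. Here I would use that the equivalence $\categ{Ch}_R^{\geq 0, K} \simeq \Psh_\Sigma(\categ{Cart}^{(K)}_R)$ is induced by the inclusion of $\categ{Cart}^{(K)}_R$, so that it carries the Yoneda embedding $\categ D \hookrightarrow \Psh_\Sigma(\categ D)$ to the inclusion $\categ{Cart}^{(K)}_R \hookrightarrow \categ{Ch}_R^{\geq 0, K}$. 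Thus the composite $\categ D \to \categ A$ is the restriction of $T_M$ to $\categ{Cart}^{(K)}_R$. Since $T_M$ is a left adjoint it preserves coproducts, and every object of $\categ{Cart}^{(K)}_R$ is a finite coproduct of the generators $R^n_k$; hence the essential image of this composite is exactly the finite coproducts of the objects $T_M(R^n_k)$. As $\categ{Cart}_M$ is by definition the smallest full subcategory of $\categ A$ containing these objects and closed under finite coproducts, we obtain $\categ B \simeq \categ{Cart}_M$.

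With this identification, Proposition \ref{propositionconnectivecase} gives $\categ A \simeq \Psh_\Sigma(\categ B) = \Psh_\Sigma(\categ{Cart}_M)$, as claimed. I expect the only genuinely delicate step to be the compatibility of the Dold--Kan presentation with representables used in the second paragraph, namely that the equivalence of the previous proposition really does send the representable presheaves to the generating chain complexes $R^n_k$; granting this, the remainder is formal bookkeeping around the coproduct-stability of $\categ{Cart}_M$ and the cocontinuity of $T_M$.
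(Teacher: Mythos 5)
Your proposal is correct and follows exactly the paper's route: the paper's proof is literally ``this is just an application of Proposition \ref{propositionconnectivecase},'' with $\categ D = \categ{Cart}^{(K)}_R$ and $\categ B$ identified with $\categ{Cart}_M$ as you spell out. The details you supply (cocontinuity of $T_M$, closure under finite coproducts, compatibility of the Dold--Kan equivalence with representables) are precisely the bookkeeping the paper leaves implicit.
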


\begin{proof}
This is just an application of Proposition \ref{propositionconnectivecase}.
\end{proof}

Moreover, given a $K$-coloured operad in chain complexes in non negative degrees $\categ P$, one has an adjunction between categories
$$
\begin{tikzcd}
\categ{Chain}_R^{\geq 0, K} \ar[rr, shift left,"T_P"]
&&\catofalg{\categ P} . \ar[ll, shift left,"U^P"]
\end{tikzcd}
$$
We say that $\categ P$ is connectively admissible if 
the category of $\operad P$-algebras admits a model structure transferred from that of $\categ{Chain}_R^{\geq 0, K}$, that is whose weak equivalences and fibrations are morphisms whose image through $U^P$ is respectively a weak equivalence and a fibration. In this case, the adjunction $T_P \dashv U^P$ becomes a Quillen adjunction.

\begin{proposition}
If $\operad P$ is $\Sigma$-cofibrant and connectively admissible, then the adjunction between the localised $\infty$-categories $\categ{Chain}_R^{\geq 0, K}[\mathrm W^{-1}]$ and $\catofalg{\categ P}[\mathrm W^{-1}]$ resulting from the Quillen adjunction $T_P \dashv U^P$ is $\omega$-accessible and strongly monadic.
\end{proposition}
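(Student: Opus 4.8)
The plan is to reproduce, in the non-negatively graded setting, the argument that yielded the analogous statement for $\categ{Ch}_R^K$, where strong monadicity and $\omega$-accessibility were obtained by combining the conservativity result of Proposition \ref{propositionconservative} with the sifted-colimit preservation result of Proposition \ref{propositionsifted}. Both of these inputs rely only on features of the transferred model structure that are insensitive to the bound on degrees, so I expect them to transfer with the same proofs.

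First I would note that the Quillen adjunction $T_P \dashv U^P$ induces an adjunction between the localised $\infty$-categories by Proposition \ref{propositionQuillenadj}, so that the functor induced by $U^P$ is a right adjoint. Moreover $\catofalg{\categ P}[\mathrm W^{-1}]$, being the localisation of a combinatorial model category, is presentable by Proposition \ref{propositioncombi} and hence cocomplete; in particular it has geometric realisations and filtered colimits. It therefore remains to check that the induced functor is conservative and preserves geometric realisations and filtered colimits.

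For conservativity I would repeat the proof of Proposition \ref{propositionconservative} verbatim: the square relating the localised $\infty$-categories to the homotopy categories has conservative vertical functors, so it suffices to treat the homotopy categories, where a map of $\operad P$-algebras whose image under $U^P$ is a weak equivalence is itself a weak equivalence by the very definition of the transferred structure. This uses only that the transferred model structure exists with weak equivalences detected by $U^P$, which is precisely the hypothesis of connective admissibility. For the remaining points I would invoke the sifted-colimit preservation: the proof of Proposition \ref{propositionsifted}, following \cite[Appendix A]{HarpazNuitenPrasma}, proceeds through the canonical filtration of the free $\operad P$-algebra functor associated to a $\Sigma$-cofibrant operad and the commutation of sifted homotopy colimits with the relevant tensor products. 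Since $\categ{Chain}_R^{\geq 0}$ is again a combinatorial symmetric monoidal model category in which these commutations hold, the same argument shows that the induced functor preserves sifted colimits, and in particular both geometric realisations and filtered colimits. Combining these facts with the previous paragraph, the induced adjunction is strongly monadic and $\omega$-accessible.

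I expect the genuine obstacle to be the sifted-colimit preservation: one must confirm that the filtration analysis of free algebras over a $\Sigma$-cofibrant operad goes through in non-negative degrees, which reduces to checking that cofibrancy and the symmetric monoidal structure of $\categ{Chain}_R^{\geq 0}$ interact exactly as in the unbounded case. The conservativity step, by contrast, is purely formal model-category theory and presents no difficulty.
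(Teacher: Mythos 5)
Your proposal is correct and follows essentially the same route as the paper: the paper's proof likewise obtains conservativity by repeating the argument of Proposition \ref{propositionconservative} (which only uses that weak equivalences are detected by $U^P$ in the transferred structure) and obtains preservation of sifted colimits, hence of geometric realisations and filtered colimits, from \cite[Appendix A]{HarpazNuitenPrasma}. The extra bookkeeping you include (existence of the derived adjunction via Proposition \ref{propositionQuillenadj} and cocompleteness via Proposition \ref{propositioncombi}) is implicit in the paper but entirely consistent with it.
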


\begin{proof}
The fact the functor induced by $U^P$ is conservative follows from the same arguments as those used in the proof of Proposition \ref{propositionconservative}. The fact that it preserves sifted colimits follows from \cite[Appendix A]{HarpazNuitenPrasma}.
\end{proof}

\bibliographystyle{amsalpha}
\bibliography{bib}

\end{document}